\def\newthm#1#2{\newtheorem{#1}[dummy]{#2}%
  \expandafter\def\csname#2\endcsname##1{\hyperref[#1:##1]{#2~\ref*{#1:##1}}}}
\theoremstyle{definition}
\newcommand{\Section}[1]{\hyperref[sec:#1]{Section~\ref*{sec:#1}}}
\newcommand{\Table}[1]{\hyperref[tab:#1]{Table~\ref*{tab:#1}}}
\newcommand{\eqn}[1]{\hyperref[eqn:#1]{(\ref*{eqn:#1})}}
\newcommand{\Figure}[1]{\hyperref[fig:#1]{Figure~\ref*{fig:#1}}}
\def\namedlabel#1#2{\begingroup
    #2%
    \def\@currentlabel{#2}%
    \phantomsection\label{#1}\endgroup
}
\DeclareMathOperator{\SL}{SL}
\DeclareMathOperator{\Gr}{Gr}
\DeclareMathOperator{\Fl}{Fl}
\DeclareMathOperator{\pt}{pt}
\DeclareMathOperator{\ch}{\ch}
\renewcommand{\P}{{\mathbb P}}
\newcommand{\C}{{\mathbb C}}
\newcommand{\Z}{{\mathbb Z}}
\newcommand{\N}{{\mathbb N}}
\newcommand{\cF}{{\mathcal F}}
\newcommand{\cO}{{\mathcal O}}
\DeclareMathOperator{\ev}{ev}
\newcommand{\wb}{\overline}
\newcommand{\ignore}[1]{}
\newcommand{\Mb}{\wb{\mathcal M}}
\newcommand{\M}{\overline{M}}
\begin{document}

\title
[Quantum \(K\)-theory of Incidence Varieties]
{Quantum \(K\)-theory of Incidence Varieties}

\date{\today}

\author{Weihong Xu}
\address{Department of Mathematics, 
Virginia Tech, 460 McBryde Hall, 225 Stanger Street, Blacksburg, VA 24061-1026, USA}
\email{weihong@vt.edu}

\subjclass[2010]{Primary 14N35; Secondary 19E08, 14M15, 14M22, 14N15}

\begin{abstract}
    We prove a conjecture of Buch and Mihalcea in the case of the incidence variety \(X=\Fl(1,n-1;n)\) and determine the structure of its (\(T\)-equivariant) quantum \(K\)-theory ring. Our results are an interplay between geometry and combinatorics. The geometric side concerns Gromov--Witten varieties of \(3\)-pointed genus \(0\) stable maps to \(X\) with markings sent to Schubert varieties, while on the combinatorial side are formulas for the (equivariant) quantum \(K\)-theory ring of \(X\). We prove that the Gromov--Witten variety is rationally connected when one of the defining Schubert varieties is a divisor and another is a point. This implies that the (equivariant) \(K\)-theoretic Gromov--Witten invariants defined by two Schubert classes and a Schubert divisor class can be computed in the ordinary (equivariant) \(K\)-theory ring of \(X\). We derive a positive Chevalley formula for the equivariant quantum \(K\)-theory ring of \(X\) and a positive closed formula for Littlewood--Richardson coefficients in the non-equivariant quantum \(K\)-theory ring of \(X\). The Littlewood--Richardson rule in turn implies that non-empty Gromov--Witten varieties given by Schubert varieties in general position have arithmetic genus \(0\).
    \smallskip
    \noindent 
    \textbf{Keywords.} Quantum \(K\)-theory, Gromov--Witten invariants, Flag varieties, Schubert calculus.
\end{abstract}

\maketitle

\section{Introduction}

Let \(X\) be a homogeneous space \(G/P\) defined by a complex semisimple linear algebraic group \(G\) and a parabolic subgroup \(P\). Let \(d\in H_2(X,\Z)\) be an effective degree and \(M_d\coloneqq\M_{0,3}(X,d)\) be the Kontsevich moduli space parametrizing \(3\)-pointed, genus \(0\), degree \(d\) stable maps to \(X\). Let \(X_u\) and \(X^v\) be opposite Schubert varieties in \(X\). Let \[M_d(X_u,X^v)\coloneqq \ev_1^{-1}(X_u)\cap \ev_2^{-1}(X^v)\subseteq M_d\] be the Gromov--Witten variety of stable maps with one marking sent to \(X_u\) and another to \(X^v\), and \[\Gamma_d(X_u,X^v)\coloneqq \ev_3(M_d(X_u,X^v))\] be the closure of the union of degree \(d\) rational curves in \(X\) connecting \(X_u\) and \(X^v\). 

Buch, Chaput, Mihalcea, and Perrin \cite{BCMP} proved that when \(X\) is a cominuscule variety (certain homogeneous spaces with Picard rank one), \begin{equation}\label{eqn:ev3}
\ev_3: M_d(X_u,X^v)\to\Gamma_d(X_u,X^v)
\end{equation} is \emph{cohomologically trivial}. This means the pushforward of the structure sheaf of \(M_d(X_u,X^v)\) is the structure sheaf of \(\Gamma_d(X_u,X^v)\) and the higher direct images of the first sheaf are zero. As a consequence, the (\(T\)-equivariant) \(K\)-theoretic (\(3\)-pointed, genus \(0\)) Gromov--Witten invariants of \(X\) satisfy the identity 
\begin{equation}\label{eqn:comin}
    I_d^T([\cO_{X_u}],[\cO_{X^v}],\sigma)=\chi_X^T([\cO_{\Gamma_d(X_u,X^v)}]\cdot\sigma),
\end{equation} 
where \(\chi_X^T\) denotes the \(T\)-equivariant pushforward along the structure morphism of \(X\). Identity \eqn{comin} says that these invariants defined on \(M_d\) can be computed in the equivariant ordinary \(K\)-theory ring of \(X\). 
     
When \(X\) is not cominuscule, \eqn{comin} doesn't always hold (see \cite{gwgrass}*{remark 2} for a counterexample).
However, Buch and Mihalcea conjecture the following \cite{private}.
\begin{conj}\label{conj:conj} 
    When \(X\) is of type \(A\) and \(X^v\) is a divisor \(D\), the map \eqn{ev3} is cohomologically trivial.
\end{conj}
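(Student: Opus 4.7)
The plan is to prove cohomological triviality via the Koll\'ar-type criterion: if $f: A \to B$ is a proper surjective morphism of projective varieties with $A$ having rational singularities and the general fiber of $f$ rationally connected with rational singularities, then $Rf_* \cO_A = \cO_B$. Applied to $\ev_3: M_d(X_u, X^v) \to \Gamma_d(X_u, X^v)$, this reduces the conjecture to two statements: (i) $M_d(X_u, X^v)$ has rational singularities, and (ii) the general fiber of $\ev_3$ is rationally connected with rational singularities.

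Statement (i) follows by standard arguments. Since $X = \Fl(1, n-1; n)$ is convex, the Kontsevich moduli $\M_{0,3}(X, d)$ is smooth, the evaluation map $\ev_1 \times \ev_2$ is flat, and Schubert varieties in $X$ have rational singularities, so the preimage $M_d(X_u, X^v)$ inherits rational singularities via flat base change.

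The main work lies in (ii). The fiber of $\ev_3$ over $x \in \Gamma_d(X_u, X^v)$ is the three-condition Gromov-Witten variety $M_d(X_u, X^v, \{x\}) \coloneqq \ev_1^{-1}(X_u) \cap \ev_2^{-1}(X^v) \cap \ev_3^{-1}(\{x\})$. The crucial case is when $X_u$ is a Schubert point, which connects to the main technical statement announced in the abstract: rational connectedness of the two-condition variety $M_d(\text{pt}, D)$. Assuming this, I would deduce rational connectedness of the three-condition fiber $M_d(\text{pt}, D, \{x\})$ by further analysis of the same kernel-span description used to prove the two-condition case. The general case (arbitrary $X_u$) would then reduce to the point case by fibering $M_d(X_u, D, \{x\}) \to X_u$ via $\ev_1$: the fibers $M_d(\{y\}, D, \{x\})$ are rationally connected by the point case, and $X_u$ is itself rationally connected as a Schubert variety, so the total space is rationally connected.

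To establish rational connectedness of $M_d(\text{pt}, D)$, I would use the explicit kernel-span description of stable maps to $\Fl(1, n-1; n)$: a degree $(d_1, d_2)$ stable map corresponds to a pair consisting of a map from a stable tree to $\P^{n-1}$ and a map to the dual $\P^{n-1}$, coupled by the incidence condition $V_1(t) \subset V_{n-1}(t)$. The point and divisor conditions translate into explicit linear-algebraic constraints on such pairs. The main obstacle is the incidence coupling, which prevents the moduli from being a simple product of two projective-space moduli and forces nontrivial interactions between the two factors. I would handle this by case analysis on the bidegree $(d_1, d_2)$ and the type of divisor $D$ (pulled back from one of the projections $\Fl(1, n-1; n) \to \P^{n-1}$, or a Schubert divisor involving both), exhibiting each case as a tower of rationally connected fibrations, with small-bidegree base cases ($d_1 = 0$ or $d_2 = 0$) reducing to known rational connectedness statements for Gromov-Witten varieties of $\P^{n-1}$, and higher-degree cases built up by smoothing reducible stable maps.
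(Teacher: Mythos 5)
Your top-level reduction is the paper's: apply the Koll\'ar criterion so that cohomological triviality reduces to rational connectedness of the general $\ev_3$-fibre. The disagreement, and the gaps, are in how you propose to prove rational connectedness. Note first a misreading: the abstract's ``rationally connected when one of the defining Schubert varieties is a divisor and another is a point'' is the \emph{three}-condition variety $M_d(X_u, D, p)$, which is already the general $\ev_3$-fibre; you read it as the two-condition $M_d(\mathrm{pt}, D)$ (which is merely unirational, by \cite[Proposition 3.2]{Buch2013}) and then plan to add the third condition and fiber over $X_u$. That $\ev_1$-fibration $M_d(X_u, D, \{x\}) \to X_u$ has at least three unaddressed problems: (a) Graber--Harris--Starr requires the total space to be \emph{irreducible}, and irreducibility of $M_d(X_u, D, p)$ is a nontrivial input --- the paper devotes \Proposition{irred} to it, combining Kleiman transversality, a connectedness argument, and the very $\rho$-fibration described below; (b) the fibres are $M_d(\{y\}, D, \{x\})$ with $y$ ranging over the \emph{proper} subvariety $X_u$ while $x$ is fixed, so the pair $(y,x)$ is not generic in $X \times X$, and a ``point case'' proved for two independently general points does not automatically transfer to this correlated configuration; (c) when $d_1=0$ or $d_2=0$ the image of $\ev_1$ is a proper Richardson subvariety of $X_u$, not $X_u$ itself, so the base of your fibration changes.

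The paper instead fibers $M_d(X_u, D, p)$ directly over the corresponding Gromov--Witten variety of $\P^{n-1}$ via the map $\rho$ induced by $p_1\colon X \to \P^{n-1}$ (the clean form of the kernel-span description you invoke). Surjectivity of $\rho$ is established by explicit lifting (\Lemma{surjpts}, \Lemma{surj2}, \Corollary{surj3}) and crucially uses the involution $\varphi$ to reduce to $d_1 \le d_2$; the target is rational (\Lemma{rational}); the general $\rho$-fibre is rational by an explicit parametrization inside $\P^{n(d_2+1)-1}$ (\Lemma{param}); and irreducibility of the source (\Proposition{irred}) is assembled from these pieces. GHS then gives rational connectedness, in Case~2 of \Theorem{ratcon}. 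Your suggested ``smoothing reducible stable maps'' for higher bidegree does not appear in the paper and would be hard to control; the explicit lifting lemmas do that work instead. Finally, the rational-singularities step should not be justified as you wrote it: $\M_{0,3}(X,d)$ is smooth only as a Deligne--Mumford stack (its coarse space has finite quotient singularities), and rational singularities are not preserved by flat base change in general; the correct mechanism is Kleiman transversality for the opposite Schubert conditions together with rational singularities of the Kontsevich space, as in \cite{Buch2013}.
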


In this paper, we consider the incidence variety \(X=\Fl(1,n-1;n)\), which is a special two-step flag variety of type \(A\) and one of the simplest homogeneous spaces that are not cominuscule.  

Our main geometric result is the following.
\begin{thm}[\(\doteq\) \Theorem{ratcon}]\label{thm:geom}
    The general fibre of \[\ev_3:M_d(X_u,D)\to\Gamma_d(X_u,D)\] is rationally connected, where \(D\) is an opposite Schubert divisor.
\end{thm}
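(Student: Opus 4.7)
My approach exploits the realization of $X = \Fl(1,n-1;n)$ as the bidegree $(1,1)$ incidence divisor in $\bP^{n-1}\times(\bP^{n-1})^{*}$. Under this embedding, a degree $d=(d_1,d_2)$ stable map $\bP^1\to X$ with irreducible domain is encoded by a pair of tuples $a=(a_1,\ldots,a_n)$ and $b=(b_1,\ldots,b_n)$ of polynomials in $t$ of degrees $d_1$ and $d_2$ respectively (each modulo scalars), subject to the single bilinear incidence relation $\sum_{i=1}^{n}a_i(t)\,b_i(t)\equiv 0$. The goal is to show that for a general $x\in\Gamma_d(X_u,D)$, the fiber $F_x:=\ev_3^{-1}(x)\cap M_d(X_u,D)$ is birational to a linearly-fibered space over a rational base, and is therefore rational, \emph{a fortiori} rationally connected.

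First, I would use the $\mathrm{PGL}_2$-action on $\bP^1$ to rigidify the marked points $p_1,p_2,p_3$. Then I would translate the three incidence conditions into \emph{linear} constraints on $a$ and on $b$ separately: the evaluation $f(p_3)=(V_1^0,V_{n-1}^0)$ fixes $[a(p_3)]=V_1^0$ and $[b(p_3)]=V_{n-1}^0$; the divisor condition $f(p_2)\in D$ imposes a single linear condition on whichever of the two tuples corresponds to the projection underlying $D$; and the Schubert condition $f(p_1)\in X_u$ splits into linear conditions on $a(p_1)$ and $b(p_1)$ because Schubert varieties of $\Fl(1,n-1;n)$ are determined by independent flag-incidence conditions on $V_1$ and on $V_{n-1}$. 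These constraints cut out projective subspaces $P_a\subset\bP^{n(d_1+1)-1}$ and $P_b\subset\bP^{n(d_2+1)-1}$, and $F_x$ is birational to the bilinear variety $Z\subset P_a\times P_b$ carved out by $\sum a_ib_i\equiv 0$.

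Next, since the incidence consists of $d_1+d_2+1$ equations that are linear in $b$ for fixed $a$, the projection $Z\to P_a$ has linear subspaces of $P_b$ as fibers. Exhibiting the expected generic fiber dimension then realizes $Z$ as a projective bundle over a dense open of $P_a$, which is rational; composing with the birational equivalence $F_x\sim Z$ gives the rational connectedness of $F_x$.

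The central obstacle will be \emph{rank control} for this bilinear system: one must show that for a general $a\in P_a$, the $d_1+d_2+1$ bilinear equations remain independent as linear equations in $b$ — equivalently, that the polynomials $\{a_i(t)\cdot t^j:1\le i\le n,\,0\le j\le d_2\}$ are linearly independent in $\C[t]_{\le d_1+d_2}$. Genericity of $x\in\Gamma_d(X_u,D)$ should force the $a$'s to be sufficiently general (no common root, appropriate splitting type of the underlying line subbundle), but establishing this cleanly requires a careful dimension and base-point-freeness argument and is the crux of the proof. A secondary but still delicate issue is the contribution of the boundary $\partial\Mb_{0,3}(X,d)$ of reducible stable maps to $F_x$; I would stratify by dual graph and induct on the bidegree, verifying that each boundary stratum meeting $F_x$ is either strictly lower-dimensional or itself rationally connected by applying the same parametrization argument to each irreducible component of the source curve.
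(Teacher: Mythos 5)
Your approach is genuinely different from the paper's. The paper does not directly parametrize the fibre $F_x$: instead it composes the projection $p_1 : X \to \P^{n-1}$ with the moduli-space functoriality of Behrend--Manin to get $\rho : \M_{0,3}(X,d)\to\M_{0,3}(\P^{n-1},d_1)$, shows the restriction of $\rho$ to $F_x$ is surjective onto a rational Gromov--Witten variety for $\P^{n-1}$ (Corollary~\ref{cor:surj3}, Lemma~\ref{lemma:rational}), shows $F_x$ is irreducible (Proposition~\ref{prop:irred}), parametrizes the fibres of $\rho$ by fixing the $\P^{n-1}$-factor (Lemma~\ref{lemma:param}), and then invokes Graber--Harris--Starr. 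Your plan, by contrast, is to parametrize $F_x$ all at once as (an open subset of) a bilinear variety $Z\subset P_a\times P_b$, and conclude rationality of $F_x$ --- which, if it worked, would be \emph{stronger} than the paper's rational connectedness in the case $d_1\ge 1$.

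There are two genuine gaps. The first is the rank-control problem you yourself flag, and it is not cosmetic: the projection $Z\to P_a$ being a projective bundle over a dense open is not automatic, and even granting it, one needs $Z$ to be \emph{irreducible} before one can conclude that ``$Z$ is birational to that projective bundle.'' The bilinear variety can acquire extra components supported over the locus in $P_a$ where the fibre dimension jumps, and whether such components contribute to $F_x$ is exactly the kind of issue that requires work. The paper sidesteps this entirely: Lemma~\ref{lemma:param} fixes the $\P^{n-1}$-side $f_1$ (so there is no family rank question) and shows that single fibre of $\rho$ is rational, and GHS does the gluing. You should also note that when $d_1>d_2$ the count of bilinear equations $d_1+d_2+1$ versus $\dim P_b$ may make the projection to $P_a$ have empty generic fibre, so you would need to project to $P_b$ instead --- this asymmetry is what the paper's reduction to the case $d_1\le d_2$ via the involution $\varphi$ handles, and your plan doesn't say which projection to use or why it has nonempty fibres.

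The second, more serious gap is irreducibility of $F_x$. Your proposed treatment of the boundary --- ``stratify by dual graph and verify each boundary stratum is either lower-dimensional or rationally connected'' --- does not give irreducibility of $F_x$, and without it you cannot conclude rational connectedness even if each stratum is fine: rational connectedness is a property of an irreducible variety, and the open locus $\mathring M_d\cap F_x$ being rational does not prevent $F_x$ from having other full-dimensional components in the boundary. In the paper this is precisely the role of Proposition~\ref{prop:irred}, whose proof is not a routine stratification: it uses Kleiman transversality for the general translate $g.D^{[k]}$, surjectivity of $\rho$ over the $\P^{n-1}$-moduli (so that components mapping dominantly can be pinned down), and a dimension count ruling out non-dominant extraneous components. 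Any direct-parametrization argument would need some replacement for that step, and as written your proposal does not supply one.
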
 
Using a result of Koll\'{a}r \cite{kollar1986higher} (see \cites{qkgrass,Buch2013}), \Theorem{geom} implies \Conjecture{conj} and the identity \begin{equation}\label{eqn:qclassical}I_d^T([\cO_{X_u}],[\cO_{D}],\sigma)=\chi_X^T([\cO_{\Gamma_d(X_u,D)}]\cdot\sigma).\end{equation} The right-hand side of \eqn{qclassical} is easily computable using Lenart and Postnikov's Chevalley formula for the equivariant ordinary \(K\)-theory ring of \(G/P\) \cite{LP}, because \(\Gamma_d(X_u,D)\) is either a Schubert variety or the intersection of a Schubert variety with \(D\) (see \Section{curvenbhd} for details).

Our main application is in studying the (\(T\)-equivariant) quantum \(K\)-theory ring of \(X\). Quantum \(K\)-theory was introduced by Givental and Lee \cites{givental2000wdvv,lee2004quantum} as a \(K\)-theoretic analogue of quantum cohomology. The latter has been widely studied in theoretical physics and various branches of mathematics following the pioneering works of Witten \cite{witten1990two} and Kontsevich \cite{kontsevich1995enumeration}. Rationality properties of Gromov--Witten varieties often have implications in quantum \(K\)-theory, see for example \cites{lee2004reconstruction, qkgrass, buch2013rational, chaput, Buch2013}.

Before stating our next main results, we shall introduce a bit more notations. (Opposite) Schubert varieties in \(X\) are indexed by \[W^P\coloneqq\{[i,j]: i\text{ and }j\text{ are unequal integers between }1\text{ and }n\}.\] Here \([i,j]\) corresponds to the permutation \(w\in S_n\) such that \(w(1)=i,\ w(n)=j\) and \(w(2)<\dots<w(n-1)\). In order to state our formulas in a simple form, we define \[\widetilde{W^P}\coloneqq\{[i,j]\in\Z\times\Z: i\not\equiv j\text{ mod }n\},\] and for \(w=[i,j]\in\widetilde{W^P}\) we define \[\cO^{w}\coloneqq q^{d(w)}[\cO_{X^{\overline w}}]\in {QK_T(X)}_q,\] where \(\overline{w}\coloneqq[\overline{i},\overline{j}]\in W^P\) is defined by \(\overline{i}\equiv i\) and \(\overline{j}\equiv j\) (mod \(n\)) being the unique such integers between \(1\) and \(n\), \[d(w)\coloneqq(\frac{i-\overline{i}}{n},\frac{\overline{j}-j}{n})\in H_2(X,\Z)=\Z\oplus\Z,\] and \[QK_T(X)_q\coloneqq QK_T(X)\otimes_{\Z[q]}\Z[q,q^{-1}]\] denotes the \(T\)-equivariant quantum \(K\)-theory ring of \(X\) with the deformation parameters \(q\) inverted. We let \(\cO^{[1]}\) and \(\cO^{[2]}\) be the classes of the structure sheaves of the two opposite Schubert divisors. 

Identity \eqn{qclassical} allows us to compute \(K\)-theoretic Gromov--Witten invariants and derive the following Chevalley formula for the equivariant quantum \(K\)-theory ring of \(X\). By \cite{qkchev}*{Section 5.3}, this formula uniquely determines the ring struture. Setting \(q=0\) recovers Lenart and Postnikov's Chevalley formula for equivariant ordinary \(K\)-theory \cite{LP} (see also \cite{knutson2017schubert} for a Littlewood--Richardson rule for the equivariant ordinary \(K\)-theory ring of two-step flag varieties). 
\begin{thm}[\(\doteq\) \Theorem{qkTchev}]\label{thm:chev}
    In \({QK_T(X)}\), for \([i,j]\in {W^P}\), the quantum multiplication by divisor classes is given by
    \begin{equation*}\cO^{[i,j]}\star\cO^{[1]}=\begin{cases}(1-[\C_{\varepsilon_i-\varepsilon_1}])\cO^{[i,j]}+[\C_{\varepsilon_i-\varepsilon_1}]\cO^{[i+1,j]}&\text{if }i+1\not\equiv j\text{ mod }n\\ (1-[\C_{\varepsilon_i-\varepsilon_1}])\cO^{[i,j]}+[\C_{\varepsilon_i-\varepsilon_1}](\cO^{[i+1,j-1]}+\cO^{[i+2,j]}-\cO^{[i+2,j-1]})&\text{if }i+1\equiv j\text{ mod }n\end{cases},\end{equation*}
    \begin{equation*}\cO^{[i,j]}\star\cO^{[2]}=\begin{cases}(1-[\C_{\varepsilon_n-\varepsilon_j}])\cO^{[i,j]}+[\C_{\varepsilon_n-\varepsilon_j}]\cO^{[i,j-1]}&\text{if }i+1\not\equiv j\text{ mod }n\\ (1-[\C_{\varepsilon_n-\varepsilon_j}])\cO^{[i,j]}+[\C_{\varepsilon_n-\varepsilon_j}](\cO^{[i+1,j-1]}+\cO^{[i,j-2]}-\cO^{[i+1,j-2]})&\text{if }i+1\equiv j\text{ mod }n\end{cases}.\end{equation*}
\end{thm}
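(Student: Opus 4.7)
The plan is to derive both formulas by combining identity~\eqn{qclassical}, made available by~\Theorem{geom}, with the Lenart--Postnikov Chevalley rule in ordinary equivariant $K$-theory~\cite{LP}, mirroring the strategy of Buch--Chaput--Mihalcea--Perrin~\cite{BCMP} in the cominuscule setting.

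First, I would identify the curve neighborhoods explicitly. For each $u=[i,j]\in W^P$, each Schubert divisor $D\in\{X^{[1]},X^{[2]}\}$, and each effective $d\in H_2(X,\Z)=\Z_{\geq 0}\oplus\Z_{\geq 0}$, the variety $\Gamma_d(X_u,D)$ is (by the remark after~\eqn{qclassical}) either a Schubert variety or the intersection of a Schubert variety with $D$. I would work this out directly using the two projections $X=\Fl(1,n-1;n)\to\Gr(1,n)$ and $X\to\Gr(n-1,n)$ together with the description of minimal rational curves in $X$. Only a handful of degrees contribute: $d=(0,0)$ reproduces the ordinary $K$-Chevalley product, while degrees $(1,0)$, $(0,1)$, and $(1,1)$ carry the first quantum corrections. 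The combinatorial condition $i+1\equiv j\pmod{n}$ is precisely the signal that a minimal rational curve through $X_u$ meeting $D$ must have positive degree in one of the two factor directions.

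Second, I would compute $\chi_X^T([\cO_{\Gamma_d(X_u,D)}]\cdot[\cO_{X_w}])$ by expanding the product in the Schubert basis using~\cite{LP}; by~\eqn{qclassical} these Euler characters are exactly the $T$-equivariant $K$-theoretic Gromov--Witten invariants $I_d^T([\cO_{X_u}],[\cO_D],[\cO_{X_w}])$ needed to read off the quantum $K$ product. To convert the invariants into the structure constants in the basis $\{\cO^w\}_{w\in\widetilde{W^P}}$, I would solve the (finite, triangular) linear system coming from the quantum pairing, using that the structure constants $c_{u,[k]}^{w,d}$ vanish for $d$ outside a finite range. Specializing to $q=0$ must recover the formula of~\cite{LP}, which serves as an a priori consistency check.

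The main obstacle is the boundary case $i+1\equiv j\pmod{n}$, where the ``naive'' next Schubert index $[i+1,j]$ (or $[i,j-1]$) falls outside $W^P$. Here I expect the three-term expression $\cO^{[i+1,j-1]}+\cO^{[i+2,j]}-\cO^{[i+2,j-1]}$ to arise as a $K$-theoretic inclusion--exclusion: the curve neighborhoods of degrees $(1,0)$ and $(0,1)$ each contribute a ``plus'' term, while their overlap (appearing in degree $(1,1)$) supplies the $-\cO^{[i+2,j-1]}$ correction, an effect familiar from Lenart--Postnikov's rule when two chains of reflections collide. Ruling out contributions from degrees $(d_1,d_2)$ with $d_1\geq 2$ or $d_2\geq 2$ will require a codimension estimate on the relevant $\Gamma_d(X_u,D)$ together with the vanishing properties implicit in~\eqn{qclassical}; getting this case exactly right, with the correct $T$-equivariant weights $[\C_{\varepsilon_i-\varepsilon_1}]$ and $[\C_{\varepsilon_n-\varepsilon_j}]$, is where most of the care is needed.
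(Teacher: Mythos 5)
Your high-level strategy — invoke the quantum-equals-classical identity~\eqn{qclassical}, identify curve neighborhoods $\Gamma_d(X_u,D)$, and feed everything through the Lenart--Postnikov Chevalley rule — matches the paper's opening moves. But the proposal glosses over the step that actually carries the proof: converting the $K$-theoretic GW invariants into quantum $\star$-structure constants. In quantum $K$-theory this conversion is not ``read off the invariants'' as in quantum cohomology; the paper uses the identity $\sigma_1 \star \sigma_2 = \Psi^{-1}(\sigma_1 \odot \sigma_2)$ from \cite{qkchev}, where $\Psi$ is built from the curve neighborhoods $\Gamma_d(X^u)$ for \emph{all} $d\geq 0$, together with the expansion of the dual class $\cO_w^\vee = \sum_{u\in I(w)}(-1)^{\ell(u)+\ell(w)}\cO_u$, which requires the separate combinatorial computation of the Bruhat intervals $I(v)$ in $W^P$ (\Lemma{I}). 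Calling this a ``finite triangular linear system'' understates what is involved and omits the specific machinery.

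Two concrete mis-steps follow from this gap. First, your claim that ``only a handful of degrees contribute'' (namely $(0,0),(1,0),(0,1),(1,1)$) is false for the raw invariants $I_d^T(\cO^u,\cO^{[k]},\cO_w)$: the tables in the paper's proof show $\Gamma_d$ stabilizes at $X$ or a fixed Schubert variety for large $d$, so these invariants remain nonzero for arbitrarily high degrees. The finiteness of the quantum product emerges only \emph{after} the $\Psi^{-1}$ correction and the alternating sum over $I(w)$, not because the GW invariants themselves vanish. Second, your attribution of the three-term expression in the $i+1\equiv j$ case to a $K$-theoretic inclusion--exclusion between curve neighborhoods of degrees $(1,0)$, $(0,1)$, $(1,1)$ does not reflect how the paper's computation goes: those signs arise from the alternating sum in the dual basis $\cO_w^\vee$ over $I(w)$, and the resulting signed tallies $S_k$ (Tables~\ref{tab:A}--\ref{tab:C}) are compared degree by degree against $\Psi$ of the right-hand side. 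Without the $\Psi$-map framework and the $I(v)$ computation, the proposed route does not close.
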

A Chevalley formula for the equivariant quantum \(K\)-theory ring of the complete flag variety \(G/B\) has been proved by Lenart, Naito, and Sagaki \cite{lenart}. Kato has established a ring homomorphism from the equivariant quantum \(K\)-theory ring of \(G/B\) to that of \(G/P\) \cite{kato}. \Theorem{chev} can also be proved by combining these results and working out the cancellations combinatorially. This approach is taken by Kouno, Lenart, Naito, and Sagaki in \cite{kouno2021}, where they prove Chevalley formulas for the equivariant quantum \(K\)-theory ring of Grassmannians of type \(A\) and \(C\).\footnote{The paper \cite{kouno2021} was updated to also include Chevalley formulas for two-step flag varieties of type \(A\).} 

We also obtain the following closed formula for Littlewood--Richardson coefficients for the non-equivariant quantum \(K\)-theory ring of \(X\). 
\begin{thm}[\(=\) \Theorem{LR}]
    In \(QK(X)\), for \([i,j],\ [k,l]\in W^P\),\begin{equation*}
        \cO^{[i,j]}\star\cO^{[k,l]}=
        \begin{cases}
            \cO^{[x,y]}&\text{if } x-y<n[\chi(i>j)+\chi(k>l)]\\ \cO^{[x,y-1]}+\cO^{[x+1,y]}-\cO^{[x+1,y-1]}&\text{if } x-y\geq n[\chi(i>j)+\chi(k>l)]
        \end{cases},
    \end{equation*}where $x=i+k-1$, $y=j+l-n$.
\end{thm}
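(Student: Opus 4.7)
The strategy is to reduce the Littlewood--Richardson rule to the Chevalley formula of \Theorem{chev}, exploiting the fact that by \cite[Section~5.3]{qkchev} multiplication by the divisor classes $\cO^{[1]}$ and $\cO^{[2]}$ uniquely determines the ring structure of $QK_T(X)$, hence also of its non-equivariant specialization $QK(X)$. Concretely, this means it suffices to show that the proposed formula is the \emph{unique} bilinear rule extending non-equivariant Chevalley associatively to all pairs of Schubert classes.

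I would proceed by induction on the length $\ell([k,l])$ of the second factor. For the base case $\ell = 0$ we have $[k,l] = [1,n]$, $\cO^{[1,n]} = 1$, $x = i$, $y = j$, and the threshold inequality $x - y < n\chi(i>j)$ is verified in both sign regimes, so the formula correctly returns $\cO^{[i,j]}$. Next I would verify the formula when $[k,l] \in \{[2,n],\,[1,n-1]\}$ is a divisor class, showing that the ``one-term'' branch matches the first case of \Theorem{chev} and the ``three-term'' branch matches the second case (which arises precisely when $i+1 \equiv j \pmod n$ or its analogue for $\cO^{[2]}$); this identifies the two branches of the LR rule with the two branches of Chevalley in the non-equivariant specialization.

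For the inductive step with $\ell([k,l]) \geq 2$, I would pick a divisor $[m] \in \{[1],[2]\}$ and a class $[k',l']$ with $\ell([k',l']) < \ell([k,l])$ such that, by \Theorem{chev}, the product $\cO^{[k',l']} \star \cO^{[m]}$ contains $\cO^{[k,l]}$ among its summands, and solve this Chevalley relation for $\cO^{[k,l]}$:
\[
\cO^{[k,l]} \;=\; \cO^{[k',l']} \star \cO^{[m]} \;-\; (\text{terms of length less than } \ell([k,l])).
\]
Multiplying on the left by $\cO^{[i,j]}$ and using associativity yields
\[
\cO^{[i,j]} \star \cO^{[k,l]} \;=\; \bigl(\cO^{[i,j]} \star \cO^{[k',l']}\bigr) \star \cO^{[m]} \;-\; \cdots,
\]
to which the inductive hypothesis applies on the inner product and each correction term. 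Applying \Theorem{chev} to each of the resulting summands produces an explicit sum that must be shown to collapse to the conjectured LR formula for $\cO^{[i,j]} \star \cO^{[k,l]}$.

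The main obstacle will be the combinatorial bookkeeping at the transition $x - y = n[\chi(i>j) + \chi(k>l)]$ between the one-term and three-term branches. At this boundary, both the inductive hypothesis and the subsequent Chevalley step can independently trigger their three-term alternative, producing multi-term sums that must collapse, through inclusion--exclusion cancellations with the correction terms from the inverted Chevalley relation, back to the stated three-term expression. I would organise the case analysis by the congruence classes of $i, j, k, l$ modulo $n$ (which controls the appearance of quantum parameters via the reduction $[x,y] \mapsto \overline{[x,y]}$) and by the signs of $i - j$ and $k - l$; once the transition cases are verified, the generic subcases follow by direct substitution.
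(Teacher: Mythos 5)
Your overall strategy — reducing the Littlewood–Richardson rule to the Chevalley formula using the fact that the divisor classes $\cO^{[1]}, \cO^{[2]}$ generate the ring — is the same leverage the paper uses, but the mechanism you propose has a genuine gap. The paper invokes Lemma~\ref{lemma:alg} (from \cite{buch2016puzzle}): one defines a bilinear map $\mu$ by the conjectured formula and then only has to verify the one-shot compatibility
\[
\mu(\cO^{[i,j]}\star\cO^{[1]},\cO^{[k,l]})=\mu(\cO^{[i,j]},\cO^{[1]}\star\cO^{[k,l]}),
\]
case by case, without any well-founded descent. Your version instead runs an explicit induction on $\ell([k,l])$, and the descent you claim does not hold.

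Concretely, you assert that the Chevalley relation inverts as $\cO^{[k,l]} = \cO^{[k',l']}\star\cO^{[m]} - (\text{terms of length less than }\ell([k,l]))$. In the three-term branch of \Corollary{qkchev}, with $[k',l'] = [i,i+1]$ and $[m]=[1]$, one has
\[
\cO^{[i,i+1]}\star\cO^{[1]} = \cO^{[i+1,i]}+\cO^{[i+2,i+1]}-\cO^{[i+2,i]},
\]
and the lengths are $\ell([i,i+1]) = n-2$, $\ell([i+1,i]) = \ell([i+2,i+1]) = n-1$, and $\ell([i+2,i]) = n$. Solving for $\cO^{[i+1,i]}$ gives correction terms $\cO^{[i+2,i+1]}$ (same length, $n-1$) and $\cO^{[i+2,i]}$ (\emph{higher} length, $n$) — not lower. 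Moreover, a class $[i+1,i]$ cannot be produced by the single-term branch from any lower-length class: one would need $[k',l']=[i,i]$ or $[i+1,i+1]$, neither in $W^P$. So for these classes (which are exactly the ones with $\overline{k}-\overline{l}\equiv 1$, i.e., where the three-term phenomenon occurs) the induction stalls, and this is precisely the regime where the two-branch structure of the LR rule must be established. The paper's Lemma~\ref{lemma:alg} sidesteps this issue entirely, which is why that framing is the right one; your argument would need to be replaced by that compatibility check (or rebuilt with a different well-founded ordering, not plain length).
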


As an application, we prove that for general \(g\in G\) and \(u,\ v,\ w\in W^P\), the Gromov--Witten variety \[M_d(g.X^u, X^v, X_w):=\ev_1^{-1}(g.X^u)\cap\ev_2^{-1}(X^v)\cap\ev_3^{-1}(X_w)\subseteq \M_{0,3}(X,d)\] has arithmetic genus \(0\) whenever it is non-empty (see \Corollary{genus0}).

Positivity of Schubert structure constants has been a topic of interest in the study of (quantum) Schubert calculus \cites{Brion,graham,anderson2010positivity,mihalcea,anderson2011positivity,qkpos}. The above-mentioned formulas that we derive are positive in the appropriate sense (see \Corollary{chevpos} and \Corollary{LRpos}).

There have also been studies on the powers of the quantum parameters \(q\) that appear with nonzero coefficients in a product of Schubert classes in the quantum cohomology and quantum \(K\)-theory rings of homogeneous spaces \cites{fulton2004quantum,Euler,Postnikov, lenart,qkpos,shifler2022minimum}. Our Littlewood--Richardson rule for \(\Fl(1,n-1;n)\) shows that the powers of \(q\) that appear with nonzero coefficients in \(\cO^u\star\cO^v\) form an interval between two degrees (see \Corollary{power}). 

Using the ring homomorphism from the quantum \(K\)-theory ring of \(X\) to that of \(\P^{n-1}\) supplied by the aforementioned result of Kato \cite{kato}, we recover known formulas for projective spaces \cites{qkchev, kouno2020, kouno2021, qkgrass, Chung} (see \Corollary{chevP^n} and \Corollary{LRP^n}). In a different direction, from our formulas one can easily derive analogous formulas for the (equivariant) (quantum) cohomology ring of \(X\). Chevalley formulas are known for the (equivariant) (quantum) cohomology rings of all \(G/P\) \cite{mihalcea2007equivariant} (see also \cite{fulton2004quantum} for the non-equivariant case and \cite{ciocan1999quantum} for the non-equivariant case in type \(A\)), but Littlewood--Richardson rules for two-step flag varieties are only known in the classical case \cites{coskun2009littlewood,buch2015mutations, buch2016puzzle,knutson2017schubert}.

Our approach for proving \Theorem{geom} is as follows. It suffices to show that when \(p\) is a general point in \(\Gamma_d(X_u,D)\), the Gromov--Witten variety \(M_d(X_u,D,p)\) is rationally connected. By a result of Behrend and Manin \cite{behrend.manin:stacks}, the projection \(p_1: X\to\P^{n-1}\) induces a morphism \[\rho: \M_{0,3}(X,d)\to \M_{0,3}(\P^{n-1},{p_1}_*d).\] We restrict \(\rho\) to a morphism from \(M_d(X_u,D,p)\) to the corresponding Gromov--Witten variety for \(\P^{n-1}\). While the restriction is not surjective in general, we use an involution of \(X\) to reduce to a situation where it is surjective (see \Corollary{surj3}). Together with the irreducibility of \(M_d(X_u,D,p)\) (see \Proposition{irred}) and parametrizations of the target and the general fibre, a result of Graber, Harris, and Starr \cite{GHS} implies that \(M_d(X_u,D,p)\) is rationally connected. 

This paper is organized as follows. In \Section{prelim}, we set up notations for working with the Kontsevich moduli space and the incidence variety and review some known geometric results that will be used in later sections. In \Section{geom}, we study the geometry of Gromov--Witten varieties and prove \Theorem{geom}. In \Section{chev} we apply \eqn{qclassical} to compute \(K\)-theoretic Gromov--Witten invariants, derive the Chevalley formula for the equivariant quantum \(K\)-theory ring of \(X\) and discuss some immediate consequences. \Section{LR} is devoted to the quantum \(K\)-theoretic Littlewood--Richardson rule and its implications.

While writing this paper, the author learned through private communications that Sybille Rosset proved independently in her doctoral thesis \cite{rosset2022quantum} \Theorem{geom}, the nonequivariant case of \Theorem{chev}, and the positivity of all structure constants of \(QK(X)\). 

\subsection*{Acknowledgements} I would like to thank my advisor Anders Buch for introducing me to this area and for many helpful conversations. I also thank Rahul Pandharipande for answering a question related to \Section{func} and Nicolas Perrin for helpful comments on the last version of this paper and for asking a question that led to the discovery of \Corollary{genus0}. I appreciate the helpful comments given by the anonymous referees. Part of this project was done while I virtually attended the ICERM Combinatorial Algebraic Geometry semester program. I thank ICERM and the organizers of this program for an inspiring semester.  

\section{Preliminaries}\label{sec:prelim}

Varieties are not assumed to be irreducible, but rational, unirational, and rationally connected varieties are. Let \(f:X\to Y\) be a morphism of varieties, then the general fibre is a fibre \(f^{-1}(u)\), where \(u\) is a closed point chosen from a suitable dense open subset \(U\subseteq Y\). 

\subsection{Moduli Space of Stable Maps}
\subsubsection{Overview} 
Given a smooth complex projective variety \(X\) and an effective degree \[d\in H_2(X)^+\coloneqq\{a\in H_2(X,\Z): (a,D)\geq 0\ \forall\text{ ample }D\in H^2(X,\Z)\},\] let \(M_{g,m}(X,d)\) be the moduli space of maps \(f\) from a connected projective nonsingular curve \(C\) of genus \(g\) with \(m\) distinct marked points to \(X\) such that \(f_*[C]=d\). 
There is a compactification \[M_{g,m}(X,d)\subseteq \M_{g,m}(X,d),\] whose points correspond to isomorphism classes of \(m\)-pointed stable maps \(f:C\to X\) representing the class \(d\), i.e., \(f_*[C]=d\), where \(C\) is allowed to be an \(m\)-pointed connected nodal curve of arithmetic genus \(g\). Evaluating each \(f\) at the \(i\)-th marked point gives a morphism \[\ev_i: \M_{g,m}(X,d)\to X.\] We write \(\ev\) for \[\ev_1\times\dots\times \ev_m: \M_{g,m}(X,d)\to X^m.\] 
The contravariant functor \(\Mb_{g,m}(X,d)\) from the category of complex schemes to sets is defined as follows: given a complex scheme \(S\), \(\Mb_{g,m}(X,d)(S)\) is the set of isomorphism classes of stable families over \(S\) of maps from \(m\)-pointed curves of genus \(g\) to \(X\) representing the class \(d\). \(\M_{g,m}(X,d)\) is a projective coarse moduli space for \(\Mb_{g,m}(X,d)\).
 
See \cite{fulton1997notes} and \cite{behrend.manin:stacks} for more details. 
 
\subsubsection{Functoriality}\label{sec:func}
 The goal of this section is to specialize 
  \cite[Theorem 3.6]{behrend.manin:stacks} to the following.
\begin{prop}\label{prop:nat}
Given a morphism of smooth projective varieties \(\xi: Y\to Z\) and \(d\in H_2(Y)^+\), there exists a natural transformation \[\Mb_{g,m}(Y,d)\to\Mb_{g,m}(Z,\xi_*d)\] given by composing stable families with \(\xi\) and collapsing components of source curves in families as necessary to make the compositions stable.
\end{prop}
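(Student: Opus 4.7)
The plan is to realize this as an immediate specialization of Behrend--Manin's functoriality theorem \cite[Theorem 3.6]{behrend.manin:stacks}, which produces natural transformations between their moduli functors of stable maps from any admissible morphism of the underlying labelled A-graphs.

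First I would package both sides in Behrend--Manin's framework: $\Mb_{g,m}(Y,d)$ is the functor they attach to the one-vertex A-graph $\tau_Y$ with genus $g$, homology class $d$, and $m$ tails (one for each marked point), and similarly $\Mb_{g,m}(Z,\xi_{*}d)$ corresponds to the analogous one-vertex A-graph $\tau_Z$ over $Z$. One should verify that $\xi_{*}d\in H_2(Z)^+$; this is automatic because $d$ is represented by a stable map $f:C\to Y$ and then $\xi\circ f:C\to Z$ represents $\xi_{*}d$, so the class is effective.

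Second I would exhibit the combinatorial morphism of A-graphs sending the vertex (resp.\ each tail) of $\tau_Y$ to the vertex (resp.\ corresponding tail) of $\tau_Z$ and pushing the class label forward via $\xi_{*}$. This is the simplest possible case of an admissible morphism in Behrend--Manin's sense --- no edges are contracted and no vertices are merged --- so their Theorem 3.6 applies and yields the desired natural transformation. Unpacking the construction, on a family $(\pi:C\to S,\,f:C\to Y,\,\sigma_{\bull})$ the transformation post-composes with $\xi$ to obtain $\xi\circ f:C\to Z$ and then applies Knudsen's fibrewise contraction to collapse exactly those irreducible components of the geometric fibres on which $\xi\circ f$ is constant and which carry fewer than three special points, reproducing the informal description given in the proposition.

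Naturality in $S$ is part of the statement of Theorem 3.6 and can also be verified directly from the universal property of the contraction. The only substantive step in the argument is precisely the one Behrend--Manin establish in their greater generality: that this stabilization can be performed uniformly in a flat family of pointed prestable curves, producing a new flat family whose formation commutes with arbitrary base change. Given that, the proposition is essentially bookkeeping, and the main obstacle would merely be cross-referencing their combinatorial formalism if one wished to write the argument without invoking the theorem directly.
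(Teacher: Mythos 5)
Your proposal is correct and takes essentially the same route as the paper: both set up the one-vertex modular graph with $m$ tails, exhibit the morphism of labelled graphs/triples pushing $d$ to $\xi_*d$, and then invoke \cite[Theorem 3.6]{behrend.manin:stacks} to obtain the natural transformation, with base-change compatibility coming for free from that theorem. The only small difference is your side remark on effectivity of $\xi_*d$, which the paper leaves implicit (and which, strictly speaking, only needs to hold when $\Mb_{g,m}(Y,d)$ is nonempty).
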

\begin{proof}
We adopt the definitions in \cite{behrend.manin:stacks}. 

Let \(\tau\) be the modular graph with one vertex \(v\) of genus \(g\) and \(m\) tails. Let \(\alpha:\{v\}\to H_2(Y)^+,\ \beta:\{v\}\to H_2(Z)^+\) be given by \(\alpha(v)=d\), \(\beta(v)=\xi_*d\). Then \((\tau,\alpha)\) and \((\tau,\beta)\) are stable \(H_2(Y)^+\)-graph and \(H_2(Z)^+\)-graph, respectively, and the quadruple \((\xi, id,\tau,id)\) is a morphism from \((Y,\tau,\alpha)\) to \((Z,\tau,\beta)\) in \(\mathfrak{BG}_s\). 

Let \(S\) be a complex scheme. A stable map \((C,x,f)\) over \(S\) from an \(m\)-pointed curve (of genus \(g\)) to \(Y\) (of class \(d\)) is the same as a stable \((Y,\tau,\alpha)\)-map over \(S\).

By \cite[Theorem 3.6]{behrend.manin:stacks}, there exists a pushforward \((D,y,h)\) of \((C,x,f)\) of stable maps under \((\xi,id,\tau,id)\). Since \((D,y,h)\) is a stable map over \(S\) from an \(m\)-pointed curve of genus \(g\) to \(Z\) of class \(\xi_*d\), this pushforward defines a map \(\rho_S: \Mb_{g,m}(Y,d)(S)\to\Mb_{g,m}(Z,\xi_*d)(S)\). Moreover, this pushforward commutes with base change, making \(\rho\) a natural transformation. The proof of \cite[Theorem 3.6]{behrend.manin:stacks} shows that the pushforward is constructed by composing stable families with \(\xi\) and collapsing components of the source curves in families as necessary to make the compositions stable.
\end{proof}

\begin{cor}\label{cor:map}
Given a morphism of smooth projective varieties \(\xi: Y\to Z\) and \(d\in H_2(Y)^+\), there exists a morphism \[\M_{g,m}(Y,d)\to\M_{g,m}(Z,\xi_*d)\] given by composing stable maps with \(\xi\) and collapsing components of source curves as necessary to make the compositions stable.
\end{cor}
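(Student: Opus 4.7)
The plan is to obtain the morphism as a formal consequence of \Proposition{nat} together with the universal property of the coarse moduli space, so essentially no further geometric input is needed.

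First, I would invoke \Proposition{nat} to produce the natural transformation $\rho: \Mb_{g,m}(Y,d) \to \Mb_{g,m}(Z, \xi_* d)$ of contravariant functors from complex schemes to sets. Post-composing $\rho$ with the canonical natural transformation $\Mb_{g,m}(Z, \xi_* d) \to \Hom(-, \M_{g,m}(Z, \xi_* d))$ supplied by the coarse moduli space property of $\M_{g,m}(Z, \xi_* d)$, I obtain a natural transformation $\Mb_{g,m}(Y,d) \to \Hom(-, \M_{g,m}(Z, \xi_* d))$.

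Next, by the universal property defining the coarse moduli space $\M_{g,m}(Y,d)$, any natural transformation from $\Mb_{g,m}(Y,d)$ into a representable functor $\Hom(-, T)$ factors uniquely through the canonical transformation $\Mb_{g,m}(Y,d) \to \Hom(-, \M_{g,m}(Y,d))$. Applying this with $T = \M_{g,m}(Z, \xi_* d)$ and invoking the Yoneda lemma, I get a unique morphism of schemes $\M_{g,m}(Y,d) \to \M_{g,m}(Z, \xi_* d)$, which is the morphism asserted by the corollary.

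Finally, to verify the description on closed points, a closed point of $\M_{g,m}(Y,d)$ corresponds to an isomorphism class of stable maps $f: C \to Y$ over $\Spec \C$, and its image under the constructed morphism is $\rho_{\Spec \C}([f])$. By the explicit construction in the proof of \Proposition{nat}, this image is obtained by composing $f$ with $\xi$ and contracting the components of $C$ on which the composition fails to be stable. There is no real obstacle: the substantive content has already been carried out in \Proposition{nat}, which relied on the Behrend--Manin pushforward of stable maps of modular graphs; the corollary is then a purely categorical consequence of the definition of a coarse moduli space.
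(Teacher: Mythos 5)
Your proof is correct and takes essentially the same route as the paper, which simply cites \Proposition{nat} together with the existence of the projective coarse moduli space from Fulton--Pandharipande; you have unpacked what that one-line citation means, namely composing the natural transformation with the structure map to $\Hom(-,\M_{g,m}(Z,\xi_*d))$ and then applying the universal property of the coarse moduli space $\M_{g,m}(Y,d)$ to descend to a morphism of schemes, with the description on closed points following from the bijection on $\Spec\C$-points.
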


\begin{proof}
This follows from \Proposition{nat} and \cite[Theorem 1]{fulton1997notes}.
\end{proof}

\subsubsection{Stable Maps to Flag Varieties}

We are mainly interested in the case where \(g=0\) and \(X\) is a flag variety \(G/P\) defined by a complex connected semisimple linear algebraic group \(G\) and a parabolic subgroup \(P\). Schubert varieties in \(X\) are orbit closures for the action of a Borel subgroup \(B\subseteq G\). We write \(B^-\) for the Borel subgroup conjugate to \(B\) by the longest Weyl group element.

Let \(X_1,\dots,X_l\) be Schubert varieties in \(X\) with \(l\leq 3\) (not necessarily in general position). We will be concerned with \[M_d(X_1,\dots,X_l)\coloneqq \ev_{1}^{-1}(X_1)\cap\dots\cap \ev_{l}^{-1}(X_l)\subseteq \M_{0,3}(X,d).\] When \(X_1\) is \(B\)-stable and \(X_2\) is \(B^-\)-stable, \(M_d(X_1,X_2)\) is either empty or unirational \cite[Proposition 3.2]{Buch2013}. We will also work with \[\mathring{M}_d(X_1,\dots,X_l)\coloneqq M_{0,3}(X,d)\cap M_d(X_1,\dots,X_l),\] the degree \(d\) curve neighborhood of \(X_1\) \[\Gamma_d(X_1)\coloneqq \ev_2(M_d(X_1)),\] i.e., the closure of the union of degree \(d\) rational curves (images of degree \(d\) maps from \(\P^1\)) in \(X\) that meet \(X_1\), and  \[\Gamma_d(X_1,X_2)\coloneqq \ev_3(M_d(X_1, X_2)),\] which is the closure of the union of degree \(d\) rational curves in \(X\) connecting \(X_1\) and \(X_2\). 

Using automorphisms of \(\P^1\), we can think of closed points in \(M_{0,3}(X,d)\) as maps \(\P^1\to X\) of degree \(d\) with marked points at \(0,\ 1,\ \infty\in\P^1\).

\subsection{Incidence Varieties}

\subsubsection{Definition and Basic Properties}\label{sec:def}

From now on \(n\geq3\). \begin{equation*}X=\Fl(1,n-1;n)=\{\text{flags of vector spaces }U\subset V\subset\C^n:\ \dim U=1,\dim V=n-1\}\end{equation*} has dimension \(2n-3\). Let \[p_1: X\to\Gr(1,n)\cong\P(\C^n)\text{ and }p_2: X\to\Gr(n-1,n)\cong\P({\C^n}^*)\] be the projections.  Schubert varieties in \(X\) are indexed by pairs of unequal integers \([i,j]\) (we use square brackets to distinguish them from degrees), where \(i\) and \(j\) are each between \(1\) and \(n\). We denote this indexing set by \(W^P\). $X$ embedds in $\P(\C^n)\times\P({\C^n}^*)$ with image given by \[x_1y_1+\dots+x_n y_n=0,\] where $x_1,\dots, x_n$ are the standard coordinates on $\C^n$ and $y_1,\dots, y_n$ are the dual coordinates on ${\C^n}^*$. 
Under this identification, the Schubert variety \(X_{[i,j]}\) is the subvariety given by \[x_{i+1}=\dots=x_n=y_1=\dots=y_{j-1}=0,\] and the opposite Schubert variety $X^{[i,j]}$ is the subvariety given by \[x_1=\dots=x_{i-1}=y_{j+1}=\dots=y_n=0.\] 
Note that \[\dim X_{[i,j]}=\text{codim}X^{[i,j]}=(i-1)+(n-j)-\chi(i>j),\] where \[\chi(i>j)=\begin{cases}1&\text{if }i>j\\0&\text{if }i\leq j\end{cases}.\] The two Schubert divisors \[D^{[1]}\coloneqq X^{[2,n]}\text{ and }D^{[2]}\coloneqq X^{[1,n-1]}\] are cut out by \(x_1=0\) and \(y_n=0\), respectively.
The involution of $\P(\C^n)\times\P({\C^n}^*)$ given by swapping $x_i$ and $y_{n+1-i}$ induces an involution \(\varphi\) of $X$.
\(\varphi\) swaps $X^{[i,j]}$ and $X^{\iota[i,j]}$, where \[\iota[i,j]:=[n+1-j,n+1-i].\] In particular, \(\varphi\) swap \(D^{[1]}\) and \(D^{[2]}\). An effective degree of \(X\) is a pair of non-negative integers \(d=(d_1,d_2)\), which is identified with \(d_1[X_{[2,n]}]+d_2[X_{[1,n-1]}]\in H_2(X)^+\). 

Let \(G=\SL(\C^n)\) (of type \(A_{n-1}\)), \(T\subset G\) be the maximal torus of diagonal matrices and \(B\subset G\) be the Borel subgroup of upper triangular matrices.
For \(i=1,\dots,n\), let \(\varepsilon_i: T\to \C^*\) be the character that sends a diagonal matrix to its \(i\)-th diagonal entry. Then the root system of \((G,B,T)\) is \[\Phi=\{\alpha_{i,j}\coloneqq\varepsilon_i-\varepsilon_j: i\neq j, 1\leq i,j\leq n\},\] and the
subset of simple roots is \[\Delta=\{\alpha_{1,2}, \alpha_{2,3}, \dots, \alpha_{n-1,n}\}\subset\Phi.\] The Weyl group \(W\) consists of permutations of  \(\{\varepsilon_1,\dots,\varepsilon_n\}\), and can be identified with \(S_n\). Let \[\Delta_P=\{\alpha_{2,3},\dots,\alpha_{n-2,n-1}\}\subset\Delta.\] \(\Delta_P\) determines a parabolic subgroup \(P\), which consists of block upper triangular matrices with \(3\) blocks sized \(1,\ n-2,\ 1\). $W_P$ is the subgroup of \(W\) consisting of permutations of \(\{\varepsilon_2,\dots,\varepsilon_{n-1}\}\). We identify \(W^P\) with the set of coset representatives of $W/W_P$ of minimal length by sending \([i,j]\) to the unique permutation \(w\) such that \(w(1)=i,\ w(n)=j\), and \(w(2)<\dots<w(n-1)\). For \(u\in W\), write \(\ell(u)\) for the length of \(u\). The Bruhat order on \(W\) gives \(W^P\) a poset structure. The set of closed points in \(X\) can be identified with the quotient set \(G/P\) by the map \begin{equation}\label{eqn:id}
    g.(E_1, E_{n-1})\mapsto gP,
\end{equation} where \(E_i=\text{Span}\{\mathbf{e}_1,\dots,\mathbf{e}_i\}\) is the span of the first \(i\) standard basis vectors in \(\C^n\). In \(\P(\C^n)\times\P({\C^n}^*)\), \(gP\) has coordinates \[x_i=g_{i,1},\  y_i={C(g)}_{i,n},\] where \(C(g)\) is the cofactor matrix of \(g\). Let \(\tilde\varphi: G\to G\) be defined as \[{\tilde\varphi(g)}_{i,j}={g^{-1}}_{n+1-j,n+1-i}={C(g)}_{n+1-i,n+1-j}\ \forall g\in G.\] Then \(\tilde\varphi\) is an involution in the category of algebraic groups. Moreover, \(\tilde\varphi\) restricts to an involution of \(P\) and an involution of \(T\) which sends \(\text{diag}(t_1,\dots,t_n)\) to diag\(({t_n}^{-1},\dots,{t_1}^{-1})\). In particular, \(\tilde\varphi\) induces an involution \(\varphi\) of \(G/P\). Under our identifications, this is exactly the \(\varphi\) defined previously.

\subsubsection{Curve Neighborhood}\label{sec:curvenbhd}

Clearly \[\Gamma_0(X_{[i,j]})=X_{[i,j]}.\] 

When \(d_1>0\) and \(d_2=0\), since the image of every stable map of degree \(d\) is contained in a fibre of \(p_2\), and every two points in a fibre of \(p_2\) are connected by a line and therefore the image of a stable map of degree \(d\), \[\Gamma_{d}(X_{[i,j]})=p_2^{-1}(p_2(X_{[i,j]}))=\begin{cases}X_{[n,j]}&\text{if }j<n\\X_{[n-1,n]}&\text{if }j=n\end{cases}.\] 

Similarly, when \(d_1=0\) and \(d_2>0\), \[\Gamma_{d}(X_{[i,j]})=p_1^{-1}(p_1(X_{[i,j]}))=\begin{cases}X_{[i,1]}&\text{if }i>1\\X_{[1,2]}&\text{if }i=1\end{cases}.\] 

When \(d_1,\ d_2>0\), we claim that \(\Gamma_d(X_{[i,j]})=X\). Since \[\Gamma_{(1,1)}(X_{1,n})\subseteq\Gamma_d(X_{[i,j]})\subseteq X,\] it suffices to show \(\Gamma_{(1,1)}(X_{1,n})=X\). \(X_{1,n}=\{A\}\), where \(A=(A_1,A_2)\) with \(A_1=\text{Span}\{\mathbf{e}_1\}\), \(A_2=\text{Span}\{\mathbf{e}_1,\dots,\mathbf{e}_{n-1}\}\). Let \(U\subseteq X\) be the open dense subset \[\{(B_1,B_2)\in X:\ B_1\nsubseteq A_2,\  B_2\nsupseteq A_1\}.\] Given \(B=(B_1,B_2)\in U\), write \(B_1=\text{Span}\{\mathbf{b}_1\}\), and let \(f:\P^1\to X\) be given by \[(s:t)\mapsto (\text{Span}\{t\mathbf{e}_1+s\mathbf{b}_1\},\text{Span}\{t\mathbf{e}_1+s\mathbf{b}_1\}+A_2\cap B_2).\] Then \(f\) has degree \((1,1)\), and \(f(0)=A\), \(f(\infty)=B\). This shows \(\Gamma_{(1,1)}(X_{1,n})\supseteq U\). Since \(\Gamma_{(1,1)}(X_{1,n})\) is closed, it must be the entire \(X\).

By the same reasoning, \[\Gamma_0(X^{[i,j]})=X^{[i,j]};\] when \(d_1>0\) and \(d_2=0\), \[\Gamma_{d}(X^{[i,j]})=\begin{cases}X^{[1,j]}&\text{if }j>1\\X^{[2,1]}&\text{if }j=1\end{cases};\] when \(d_1=0\) and \(d_2>0\), \[\Gamma_{d}(X^{[i,j]})=\begin{cases}X^{[i,n]}&\text{if }i<n\\X^{[n,n-1]}&\text{if }i=n\end{cases};\] when \(d_1>0\) and \(d_2>0\), \[\Gamma_d(X^{[i,j]})=X.\]

\begin{lemma}\label{lemma:gamma}
\(\Gamma_d(X_{[i,j]},D^{[k]})=\begin{cases}\Gamma_d(X_{[i,j]})&\text{if }d_k>0\\ \Gamma_d(X_{[i,j]})\cap D^{[k]}&\text{if }d_k=0.\end{cases}\)
\end{lemma}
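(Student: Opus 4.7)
The plan is to verify both inclusions in each case separately, using the geometric description \(D^{[k]} = p_k^{-1}(H_k)\) for \(H_k\) a hyperplane in \(\P^{n-1}\). Under this description, for any stable map \(f\colon C \to X\) of class \(d = (d_1, d_2)\), the pullback divisor \(f^*D^{[k]} = (p_k\circ f)^*H_k\) on \(C\) has total degree equal to \(d_k\), so \(d_k\) controls how \(f\) interacts with \(D^{[k]}\).

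When \(d_k = 0\), I would first show that any stable map of class \(d\) has image contained in a single fibre of \(p_k\). Decomposing \(C\) into irreducible components \(C_i\), effectivity forces the \(p_k\)-component of each \(f_*[C_i]\) to vanish (they are nonnegative and sum to \(d_k = 0\)), so \(p_k\circ f\) is constant on each \(C_i\); connectedness of \(C\) then pins the image into a single fibre \(p_k^{-1}(q)\). Consequently any marking of \(f\) landing in \(D^{[k]} = p_k^{-1}(H_k)\) forces \(q \in H_k\), and thus the whole image into \(D^{[k]}\). This immediately yields both inclusions: \(\Gamma_d(X_{[i,j]}, D^{[k]})\subseteq \Gamma_d(X_{[i,j]})\cap D^{[k]}\) follows because \(\ev_3\) of any element of \(M_d(X_{[i,j]}, D^{[k]})\) then lies in \(D^{[k]}\); the reverse uses the \(S_3\)-symmetry of \(\M_{0,3}(X,d)\) to produce, for any \(p \in \Gamma_d(X_{[i,j]})\cap D^{[k]}\), a stable map with \(\ev_1 \in X_{[i,j]}\) and \(\ev_3 = p\), after which the fibre argument forces the second marking to already lie in \(D^{[k]}\).

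When \(d_k > 0\), the inclusion \(\Gamma_d(X_{[i,j]}, D^{[k]}) \subseteq \Gamma_d(X_{[i,j]})\) is immediate by dropping the \(\ev_2\)-constraint. For the reverse, I would fix \(p\) in a dense open subset of \(\Gamma_d(X_{[i,j]})\) realized by a stable map \((C, x_1, x_2, x_3, f)\) with smooth source \(C = \P^1\), \(f(x_1)\in X_{[i,j]}\), and \(f(x_3) = p\). Since \(d_k > 0\), the effective divisor \(f^*D^{[k]}\) on \(C\) is nonempty; generically one may pick \(c\in C\) with \(f(c)\in D^{[k]}\) and \(c\neq x_1, x_3\). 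The re-marked map \((C, x_1, c, x_3, f)\) then lies in \(M_d(X_{[i,j]}, D^{[k]})\) with \(\ev_3 = p\), and closedness of \(\Gamma_d(X_{[i,j]}, D^{[k]})\) (the image of a projective variety) upgrades the open dense inclusion to the full inclusion.

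The main step demanding care is the re-marking argument in the \(d_k > 0\) case when one is near the boundary of the moduli space: if the source is reducible, or if \(c\) falls at a node or coincides with \(x_1\) or \(x_3\), then moving the second marking to \(c\) can break stability. Restricting to the dense open smooth-source locus with generic markings and then invoking closedness sidesteps this technicality cleanly.
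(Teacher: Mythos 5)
Your proposal is correct and rests on exactly the same two geometric observations the paper uses: when \(d_k=0\), the image of any stable map of class \(d\) is contained in a single fibre of \(p_k\), so the image is either disjoint from or contained in \(D^{[k]}\); and when \(d_k>0\), positivity of the intersection \(d\cdot D^{[k]}\) forces every degree \(d\) curve to meet \(D^{[k]}\), so the constraint is vacuous. The paper phrases everything in terms of the set-theoretic description of \(\Gamma_d\) as a closure of a union of curves and leaves the re-marking step implicit, whereas you make the re-marking and the closedness argument explicit; this is a matter of exposition, not a different proof.
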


\begin{proof}
\(\Gamma_d(X_{[i,j]},D^{[k]})\) is the closure of the union of degree \(d\) rational curves in \(X\) meeting \(X_{[i,j]}\) and \(D^{[k]}\). 

When \(d_k>0\), every degree \(d\) rational curve in \(X\) meets \(D^{[k]}\), so \(\Gamma_d(X_{[i,j]},D^{[k]})\) is the closure of the union of degree \(d\) rational curves in \(X\) meeting \(X_{[i,j]}\), which is \(\Gamma_d(X_{[i,j]})\). 

When \(d_k=0\), let \(C\) be a degree \(d\) rational curve in \(X\), then \(C\cap D^{[k]}\) is either empty or \(C\). In other words, \(C\) meets \(D^{[k]}\) if and only if \(C\subseteq D^{[k]}\).
\end{proof}

\begin{cor}\label{cor:ratsing}
\(\Gamma_d(X_{[i,j]},D^{[k]})\) has rational singularities.
\end{cor}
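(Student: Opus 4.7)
The plan is to combine \Lemma{gamma} with the explicit descriptions of $\Gamma_d(X_{[i,j]})$ given in \Section{curvenbhd}, and then invoke classical results on singularities of Schubert and Richardson varieties.

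First I would dispose of the case $d_k > 0$. Here \Lemma{gamma} gives $\Gamma_d(X_{[i,j]},D^{[k]}) = \Gamma_d(X_{[i,j]})$, and inspecting the case analysis in \Section{curvenbhd} shows that this curve neighborhood is always a Schubert variety in $X$: it is $X_{[i,j]}$ when $d=0$; one of $X_{[n,j]}$, $X_{[n-1,n]}$, $X_{[i,1]}$, or $X_{[1,2]}$ when exactly one of $d_1,d_2$ is positive; and $X$ itself when both are positive. Schubert varieties in $G/P$ are well-known to have rational singularities (Ramanathan; see also Brion--Kumar), so this case is immediate.

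Next I would handle the case $d_k = 0$. Here $\Gamma_d(X_{[i,j]},D^{[k]}) = \Gamma_d(X_{[i,j]}) \cap D^{[k]}$ by \Lemma{gamma}. By the same case analysis as above, $\Gamma_d(X_{[i,j]})$ is a $B$-stable Schubert variety, while $D^{[k]}$ is a $B^-$-stable opposite Schubert variety. Hence the intersection is a Richardson variety in $X = G/P$. Richardson varieties are normal, Cohen--Macaulay, and have rational singularities by a result of Brion (see, e.g., his paper on positivity in the Grothendieck group of complex flag varieties), which finishes this case.

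The only subtlety I anticipate is making sure the needed statements about Schubert and Richardson varieties are quoted with precise references in the form used here (reduced intersection, and the possibility that $\Gamma_d(X_{[i,j]}) \cap D^{[k]}$ is empty, in which case the corollary is vacuous). Everything else is a bookkeeping step: one simply matches the seven explicit descriptions of $\Gamma_d(X_{[i,j]})$ from \Section{curvenbhd} with the two cases of \Lemma{gamma} and applies the appropriate classical singularity result.
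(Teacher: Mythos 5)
Your proof is correct and follows essentially the same route as the paper: both use \Lemma{gamma} (together with the descriptions in \Section{curvenbhd}) to recognize $\Gamma_d(X_{[i,j]},D^{[k]})$ as a Richardson variety, then invoke Ramanathan for Schubert varieties and Brion's result for Richardson varieties. Your split into the two cases of \Lemma{gamma} is just a slightly more explicit presentation of the same argument, since the paper treats a Schubert variety as the special case of a Richardson variety obtained by intersecting with $X$ itself.
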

\begin{proof}
\Lemma{gamma} shows that \(\Gamma_d(X_{[i,j]},D^{[k]})\) is a Richardson variety, i.e., an intersection of two opposite Schubert varieties. Schubert varieties are known to have rational singularities \cite{Ramanathan}. It then follows from \cite[Lemma 2]{Brion} that Richardson varieties have rational singularities. 
\end{proof}

\subsection{Helpful Lemmas}

The following lemmas will be used in the proofs of \Proposition{irred} and \Theorem{ratcon}.

\begin{lemma}\label{lemma:fibrecomp}
Let \(\phi:Y\to Z\) be a dominant morphism between irreducible varieties and \(U\) be a non-empty open subset of \(Y\). Then every irreducible component of the general fibre meets \(U\).
\end{lemma}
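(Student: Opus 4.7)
The plan is a dimension count based on applying the fibre-dimension theorem twice: once to $\phi$ to control a general fibre, and once to the restriction of $\phi$ to each component of the complement $F := Y \setminus U$ to bound how much of the fibre can land in $F$. If the second bound is strictly smaller than the first, then no component of the general fibre can be contained in $F$, so every component must meet $U$.

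First I would apply generic flatness: since $\phi$ is dominant between irreducible varieties, there is a dense open $V \subseteq Z$ over which $\phi$ becomes flat. Every fibre $\phi^{-1}(z)$ with $z \in V$ is then equidimensional of dimension $r := \dim Y - \dim Z$. Equivalently, one can appeal to Chevalley's theorem on fibre dimensions together with upper semicontinuity; the point is to extract \emph{equidimensionality} of the general fibre, not just an upper bound.

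Next, decompose $F = F_1 \cup \dots \cup F_k$ into irreducible components. Because $U$ is a non-empty open subset of the irreducible variety $Y$, each $F_i$ is a proper closed subvariety, so $\dim F_i < \dim Y$. For each $i$, consider the restriction $\phi|_{F_i}:F_i \to Z$. If it is not dominant, then $\phi^{-1}(z) \cap F_i = \emptyset$ for all $z$ outside the proper closed set $\overline{\phi(F_i)}$. If it is dominant, the fibre-dimension theorem applied to $\phi|_{F_i}$ gives a dense open $V_i \subseteq Z$ on which $\dim\bigl(\phi^{-1}(z) \cap F_i\bigr) \leq \dim F_i - \dim Z < r$. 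Intersecting $V$ with all the $V_i$ (or with the complement of $\overline{\phi(F_i)}$ in the non-dominant case) produces a dense open $V^* \subseteq Z$ on which both estimates hold simultaneously.

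For any $z \in V^*$, every irreducible component of $\phi^{-1}(z)$ has dimension exactly $r$, while $\phi^{-1}(z) \cap F$ is a closed subset of strictly smaller dimension. Hence no component of $\phi^{-1}(z)$ can be contained in $F$, so each meets $U = Y \setminus F$, proving the claim. The only real subtlety, and where I expect the main obstacle to lie, is in securing equidimensionality of the general fibre rather than a mere upper bound on its dimension; once generic flatness (or an equivalent input) is in hand, the rest is a bookkeeping exercise with dimensions.
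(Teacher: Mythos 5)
Your proposal is correct and follows essentially the same route as the paper: split the complement $F = Y \setminus U$ into irreducible components, discard those whose image is not dense, and for the rest compare the dimension of the general fibre of $\phi$ (equidimensional of dimension $\dim Y - \dim Z$ by generic flatness) against the strictly smaller dimension of the general fibre of the restriction to each component. The paper phrases the argument more compactly but uses the identical dimension-count and the same appeal to generic flatness.
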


\begin{proof}
Let \(W\) be an irreducible component of \(Y\setminus U\). If \(\phi(W)\) is not dense in \(Z\), then the general fibre avoids \(W\). If \(\phi(W)\) is dense in \(Z\), then by generic flatness, the irreducible components of the intersection of the general fibre with \(W\) have strictly smaller dimension than do irreducible components of the general fibre. Therefore, no irreducible component of the general fibre is contained in \(W\).
\end{proof}

\begin{lemma}\label{lemma:reduced}
Let \(\phi:Y\to Z\) be a dominant morphism between irreducible varieties over a field of characteristic \(0\). Then the general fibre is reduced.
\end{lemma}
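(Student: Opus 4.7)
The plan is to combine generic smoothness (where characteristic zero is essential) with generic flatness plus the principle that geometric reducedness of fibres spreads in a flat family.

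First I would pass to the smooth locus of \(Y\). Since the ground field has characteristic \(0\) and is therefore perfect, \(Y^{\mathrm{sm}} \subseteq Y\) is open and dense, and the restriction \(\phi|_{Y^{\mathrm{sm}}}: Y^{\mathrm{sm}} \to Z\) remains dominant. Generic smoothness, applied to this restriction, produces a dense open \(U_1 \subseteq Z\) such that \(\phi|_{Y^{\mathrm{sm}}}\) is smooth over \(U_1\); in particular, for every \(u \in U_1\), the open subscheme \(\phi^{-1}(u) \cap Y^{\mathrm{sm}}\) of the fibre is smooth, hence reduced. Using \Lemma{fibrecomp} applied to the dense open \(Y^{\mathrm{sm}} \subseteq Y\), I can moreover guarantee that after shrinking \(U_1\) every irreducible component of the general fibre meets \(Y^{\mathrm{sm}}\) and is therefore generically reduced.

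The main obstacle will be upgrading this to reducedness of the full fibre as a scheme, i.e., ruling out nilpotents supported on \(\phi^{-1}(u) \cap (Y \setminus Y^{\mathrm{sm}})\) and embedded associated points. For this I would shrink further by generic flatness to a dense open \(U_2 \subseteq U_1\) over which \(\phi\) is flat. The generic fibre \(Y_\eta = Y \times_Z \Spec k(Z)\) is a localisation of the reduced scheme \(Y\) and is therefore reduced; since \(k(Z)\) has characteristic zero and is hence perfect, reducedness of \(Y_\eta\) is equivalent to geometric reducedness. The standard constructibility result that geometric reducedness of fibres in a flat finite-type family defines an open subset of the base (EGA~IV.9.7.7) then yields a dense open \(U \subseteq U_2\) such that \(\phi^{-1}(u)\) is geometrically reduced, and in particular reduced, for every closed point \(u \in U\). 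Note that characteristic zero enters crucially in two places: it guarantees density of the smooth locus of \(Y\) and the equivalence of reducedness with geometric reducedness that drives the spreading argument.
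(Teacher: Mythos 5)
Your second paragraph is the proof, and it is essentially the paper's argument: the generic fibre \(Y_\eta\) is a localisation of the reduced scheme \(Y\) (the paper notes it is in fact a domain, being a subring of \(\operatorname{Frac}(B)\) once dominance gives the injection \(A\hookrightarrow B\)), hence reduced; characteristic \(0\) upgrades this to geometrically reduced since \(k(Z)\) is perfect; and constructibility of the geometrically-reduced locus spreads this to a dense open of \(Z\). The paper cites the Stacks Project (tags 020I and 0578) where you cite EGA~IV.9.7.7, but the content is the same.

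Two small remarks. Your first paragraph, invoking generic smoothness and \Lemma{fibrecomp} to show components of the general fibre are generically reduced, is a false start that your second paragraph does not build on and that you correctly diagnose as insufficient (it cannot rule out embedded points); you could delete it entirely. Also, the appeal to generic flatness is not needed for the spreading step: EGA~IV.9.7.7 gives constructibility of the set of \(u\in Z\) with geometrically reduced fibre without any flatness hypothesis, and a constructible subset of an irreducible scheme containing the generic point already contains a dense open. (Flatness would be needed if you wanted \emph{openness} rather than constructibility, but you do not.)
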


\begin{proof}

Without loss of generality, assume \(Z=\text{Spec}A\), \(Y=\text{Spec}B\). Then the generic fibre is \(\text{Spec}(B\otimes_A A_0)\). \(\phi\) being dominant implies \(A\subseteq B\). So \(B\otimes_A A_0\subseteq B_0\) is a domain. Since the characteristic is \(0\), the generic fibre is geometrically reduced by \cite[\href{https://stacks.math.columbia.edu/tag/020I}{Lemma 020I}]{stacks-project}, and the conclusion follows from \cite[\href{https://stacks.math.columbia.edu/tag/0578}{Lemma 0578}]{stacks-project}.
\end{proof}

\section{Geometry of Gromov--Witten Varieties}\label{sec:geom}

By \Corollary{map}, there is a map \(\rho: \M_{0,3}(X,d)\to \M_{0,3}(\P^{n-1},d_1)\) given by composing with \(p_1\) and collapsing components of the source curve as necessary to make the composition stable. In particular, the diagram     \[\begin{CD}\overline{M}_{0,3}(X,d)     @>\rho>>  \M_{0,3}(\P^{n-1},d_1)\\@VV\ev V @VV\ev V       \\ X^3 @>{p_1}^3>> (\P^{n-1})^3\end{CD}\] commutes. Let \[\rho_2: M_d(X_{[i,j]},X^{[n,1]})\to M_{d_1}(p_1(X_{[i,j]}),p_1(X^{[n,1]}))\] and \[\rho_3^{g,k}: M_d(X_{[i,j]},X^{[n,1]},g.D^{[k]})\to M_{d_1}(p_1(X_{[i,j]}),p_1(X^{[n,1]}),g.p_1(D^{[k]}))\] be restrictions of \(\rho\), where \(g\in G\) is not assumed to be general. Using the automorphism \(\varphi\), we can often reduce to the case where \begin{equation}\label{eqn:assumption}\text{either }d_1<d_2,\text{ or }d_1=d_2\text{ and }k=1.\end{equation} 

\begin{lemma}\label{lemma:surjpts}
Assume \(d_1\leq d_2\). Let \(A=(A_1,A_2),\ B=(B_1,B_2)\in X\) be pairs of vector spaces such that \(A_1\nsubseteq B_2\), \(B_1\nsubseteq A_2\). Then the restriction \[\rho: M_d(A,B)\to M_{d_1}(A_1,B_1)\] is surjective.
\end{lemma}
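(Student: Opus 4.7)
The strategy combines a properness argument with an explicit lifting for generic elements. Since $M_d(A,B)$ and $M_{d_1}(A_1,B_1)$ are closed subvarieties of the projective Kontsevich moduli spaces $\M_{0,3}(X,d)$ and $\M_{0,3}(\P^{n-1},d_1)$, respectively, the morphism $\rho$ is proper, and its image in $M_{d_1}(A_1,B_1)$ is closed. Hence it suffices to exhibit lifts for a dense subset of the target.

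I take this dense subset to consist of those $g\in M_{d_1}(A_1,B_1)$ whose source is irreducible, so that $g\colon\P^1\to\P^{n-1}$ is an honest morphism of degree $d_1$ with $g(0)=A_1$ and $g(\infty)=B_1$. Write $g=[\vec x]$ with $\vec x=(x_1,\ldots,x_n)$ a tuple of degree-$d_1$ homogeneous polynomials on $\P^1$, normalized so that $\vec x(0)$ and $\vec x(\infty)$ are chosen representative vectors for $A_1$ and $B_1$. I look for a lift of the form $f=(g,[\vec y])\colon\P^1\to X$ of bidegree $(d_1,d_2)$, where $\vec y=(y_1,\ldots,y_n)$ is a tuple of degree-$d_2$ homogeneous polynomials satisfying the incidence relation $\sum x_iy_i\equiv 0$ and with $\vec y(0)=c\vec{A_2}$, $\vec y(\infty)=c'\vec{B_2}$ for some nonzero scalars $c,c'$ (here $\vec{A_2},\vec{B_2}$ denote chosen representative covectors for $A_2,B_2$).

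Expanding $\vec x=\sum_j s^{d_1-j}t^j\vec u_j$ and $\vec y=\sum_k s^{d_2-k}t^k\vec v_k$ with $\vec v_0=c\vec{A_2}$, $\vec v_{d_2}=c'\vec{B_2}$, the incidence relation decomposes into scalar equations $E_m\colon \sum_{j+k=m}\vec u_j\cdot\vec v_k=0$ for $m=0,\ldots,d_1+d_2$. The endpoint equations $E_0$ and $E_{d_1+d_2}$ hold automatically from $A_1\subset A_2$ and $B_1\subset B_2$. For $1\le m\le d_2-1$, $E_m$ determines the $\vec{A_1}$-projection of $\vec v_m$ (since the coefficient of $\vec v_m$ is $\vec u_0=\vec{A_1}$); for $d_2+1\le m\le d_1+d_2-1$, $E_m$ determines the $\vec{B_1}$-projection of $\vec v_{m-d_1}$; and the middle equation $E_{d_2}$ takes the form $c'(\vec{A_1}\cdot\vec{B_2})+(\text{lower-order terms})=0$, solvable for $c'$ precisely because $A_1\nsubseteq B_2$. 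The hypothesis $B_1\nsubseteq A_2$ enters symmetrically, guaranteeing the analogous solvability at the other endpoint; together the two hypotheses also force $A_1\neq B_1$ (otherwise $B_1\nsubseteq A_2$ would contradict $A_1\subset A_2$), so $\vec{A_1}$ and $\vec{B_1}$ are linearly independent and the forward/backward constraints on each $\vec v_m$ in the overlap range act on independent directions, ensuring no collision.

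Consequently the linear system is consistent with a solution space of positive dimension. A generic solution has $c,c'\neq 0$ and no common zeros among the $y_i$, producing an honest morphism $f\colon\P^1\to X$ of bidegree $(d_1,d_2)$ with $f(0)=A$, $f(\infty)=B$, and $\rho(f)=g$. The main technical point to verify is the consistency of this linear system; the hypotheses are precisely what prevent the coefficients appearing in the middle equation from vanishing and what prevent the degenerate case $A_1=B_1$. Non-generic $g$ (with nodal source or other degenerations) are then covered automatically by closedness of the image.
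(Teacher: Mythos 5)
Your argument takes a genuinely different route from the paper, but there are gaps that need to be filled before it can be considered complete.

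The paper avoids the linear-algebraic analysis entirely by a slicker construction: given $f_1\in M_{d_1}(A_1,B_1)$ with irreducible source, it defines $\tilde f(a)=(f_1(a),\,f_1(a)+(A_2\cap B_2))$ on the dense open set of $a\in\P^1$ where $f_1(a)\nsubseteq A_2\cap B_2$ (which contains $0$ and $\infty$), extends it to a morphism $\P^1\to X$ of bidegree $(d_1,d_2')$, and observes that $d_2'\le d_1\le d_2$ because $d_2'$ counts the solutions of $f_1(a)\in C_1+(A_2\cap B_2)$ for generic $C_1$, a hyperplane condition in degree $d_1$. One then attaches degree-$(0,1)$ components to reach degree exactly $d$; these collapse under $p_1$, so $\rho$ of the resulting stable map is still $f_1$. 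This sidesteps the question of whether a bidegree-$(d_1,d_2)$ morphism from an irreducible $\P^1$ lying over $f_1$ exists at all — a question your approach must answer affirmatively for generic $f_1$.

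The concrete gaps in your write-up are the following. First, the ``forward/backward constraints act on independent directions'' heuristic does not by itself establish consistency of the inhomogeneous linear system $E_1,\dots,E_{d_1+d_2-1}$. Each forward equation $E_m$ determines the $A_1$-pairing of $v_m$ in terms of the \emph{full} vectors $v_0,\dots,v_{m-1}$, which in turn include $B_1$-components that the backward equations constrain; the system is genuinely coupled and one has to exhibit a triangular structure after choosing pivot coordinates carefully (the $A_1$- and $B_1$-pairings of each $v_m$ along with $c'$) and verify that the coefficient matrix has full row rank. Second, you assert without proof that a generic solution has $c,c'\ne0$ and no common zero among the $y_i$; for $c'$ this requires knowing that $c'$ is not identically zero on the solution space, which needs an argument. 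Third, and more fundamentally, you implicitly assume that for $g$ in a dense open set of $M_{d_1}(A_1,B_1)$ a lift with \emph{irreducible} source and \emph{exact} bidegree $(d_1,d_2)$ exists; the solvability of the linear system depends on the splitting type of the kernel bundle $K=\ker(\C^n\otimes\cO_{\P^1}\to\cO_{\P^1}(d_1))$, which can be unbalanced (e.g. when $f_1$ maps onto a line, forced when $d_1=1$), and you would need to check that in those cases the relevant joint evaluation map still has $(\vec A_2,\vec B_2)$ in its image. None of these points is fatal, but each requires a real argument, and collectively they make the approach considerably harder to close out than the paper's attach-and-contract strategy.
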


\begin{proof}
Let \(f_1:\P^1\to\Gr(1,n)\) be a map of degree \(d_1\) such that \[f_1(0)=A_1,\ f_1(\infty)=B_1.\] It suffices to show \(f_1\) is in the image.

Let \[U=\{a\in\P^1: f_1(a)\nsubseteq A_2\cap B_2\}.\] Since \(0,\infty\in U\), \(U\) is non-empty. Being the preimage of the complement of a linear subspace, \(U\) is open. Therefore, \(U\) is dense in \(\P^1\). Since \(\P^1\) is nonsingular and \(X\) is projective, the map \(\tilde{f}: U\to X\) given by \[\tilde{f}(a)=(f_1(a),f_1(a)+(A_2\cap B_2))\] can be extended to all of \(\P^1\). The extended map \(\tilde{f}\) has degree \((d_1,d_2')\) for some \(d_2'\leq d_1\). Indeed, \(d_2'\) counts the number of \(a\)'s in \(U\) such that 
\begin{equation}\label{eqn:condition}
    f_1(a)+(A_2\cap B_2)\text{ contains a general }1\text{ dimensional subspace }C_1\text{ of }\C^n.
\end{equation}  \eqn{condition} is equivalent to \[f_1(a)+(A_2\cap B_2)=C_1+(A_2\cap B_2),\] which is in turn equivalent to the condition that \[f_1(a)\text{ is contained in the hyperplane }C_1+(A_2\cap B_2).\] Therefore, \(d_2'\leq d_1\) by the definition of \(d_1\). By attaching additional components to the domain as necessary using \cite[Lemma 12]{fulton1997notes}, we get a stable map \(f:C\to X\) of degree \(d\) with image \(f_1\). Note that the added components collapse after composing with \(p_1\).
\end{proof}

\begin{lemma}\label{lemma:surj2}
When \(d_1\leq d_2\) and \(M_d(X_{[i,j]},X^{[n,1]})\) is non-empty, \[\rho_2:M_d(X_{[i,j]},X^{[n,1]})\to M_{d_1}(p_1(X_{[i,j]}),p_1(X^{[n,1]}))\] is surjective.  
\end{lemma}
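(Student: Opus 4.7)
The plan is to combine \Lemma{surjpts} with a density argument based on properness of $\rho_2$ and irreducibility of the target. Since $M_d(X_{[i,j]},X^{[n,1]})$ is closed in the proper moduli space $\overline{M}_{0,3}(X,d)$, the morphism $\rho_2$ is proper and therefore has closed image in $M_{d_1}(p_1(X_{[i,j]}),p_1(X^{[n,1]}))$. The target is a Gromov-Witten variety in $\P^{n-1}$ associated to the $B$-stable Schubert subspace $p_1(X_{[i,j]})\cong\P^{i-1}$ and the $B^-$-stable Schubert point $p_1(X^{[n,1]})=\{\Span\{\mathbf{e}_n\}\}$, so by \cite[Proposition~3.2]{Buch2013} it is unirational, hence irreducible. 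It therefore suffices to exhibit a non-empty open subset of the target that is contained in the image of $\rho_2$.

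Let $B=(B_1,B_2)=(\Span\{\mathbf{e}_n\},\Span\{\mathbf{e}_2,\dots,\mathbf{e}_n\})$ be the unique point of $X^{[n,1]}$. For a generic $A_1\in p_1(X_{[i,j]})$, I will choose an $(n-1)$-dimensional subspace $A_2\subseteq\C^n$ satisfying both $(A_1,A_2)\in X_{[i,j]}$ (equivalently $A_2\supseteq A_1+\Span\{\mathbf{e}_1,\dots,\mathbf{e}_{j-1}\}$) and the compatibility hypotheses of \Lemma{surjpts}, namely $A_1\nsubseteq B_2$ and $B_1\nsubseteq A_2$ (equivalently, $A_1$ has a nonzero $\mathbf{e}_1$-component and $\mathbf{e}_n\notin A_2$). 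The condition $A_1\nsubseteq B_2$ carves out a dense open subset of $p_1(X_{[i,j]})=\P(\Span\{\mathbf{e}_1,\dots,\mathbf{e}_i\})$, and for such $A_1$ the required $A_2$ can be built by extending $A_1+\Span\{\mathbf{e}_1,\dots,\mathbf{e}_{j-1}\}$ to an $(n-1)$-dimensional subspace of $\Span\{\mathbf{e}_1,\dots,\mathbf{e}_{n-1}\}$ whenever $i,j\le n-1$; the boundary cases $i=n$ and $j=n$ (which cannot occur simultaneously since $[i,j]\in W^P$) are handled by similar, slightly more careful, dimension counts. This produces a non-empty open subset $V\subseteq p_1(X_{[i,j]})$ of admissible $A_1$.

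Setting $U:=\ev_1^{-1}(V)$ inside the target, every $[f_1,p_1,p_2,p_3]\in U$ satisfies $A_1:=f_1(p_1)\in V$, so by the construction above there exists $A=(A_1,A_2)\in X_{[i,j]}$ to which \Lemma{surjpts} applies. That lemma then produces an element of $M_d(A,B)\subseteq M_d(X_{[i,j]},X^{[n,1]})$ mapping to $f_1$ under $\rho_2$. Hence $U\subseteq\operatorname{image}(\rho_2)$, and by the closedness and irreducibility established above the image is all of the target. The main obstacle will be ensuring that $U$ is non-empty, i.e., that the evaluation $\ev_1$ on the target meets $V$; this is expected from standard moduli theory whenever $d_1\ge 1$, and in the degenerate case $d_1=0$ the target is a single point whose image under $\rho_2$ is non-empty by the assumed non-emptiness of $M_d(X_{[i,j]},X^{[n,1]})$, yielding surjectivity directly.
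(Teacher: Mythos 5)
Your proposal follows essentially the same strategy as the paper: cite \cite[Proposition~3.2]{Buch2013} for irreducibility of the target, use closedness of the image of the proper morphism $\rho_2$ to reduce to hitting a dense open subset, and then produce preimages of suitably generic points via \Lemma{surjpts}. You are slightly more explicit about the topological reduction (the paper only says ``It suffices to show $f_1$ is in the image'' and leaves the density/closedness reasoning implicit), so that part is sound.

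Where your proposal is vaguer than the paper is in the boundary case $i = n$, which is where the real work of the proof lies. You assert that $A_1\nsubseteq B_2$ (i.e., $x_1\neq 0$) carves out a dense open set of admissible $A_1$, and that the boundary cases are handled by ``slightly more careful dimension counts,'' but this glosses over a genuine point: when $i=n$, the condition $x_1\neq 0$ alone does \emph{not} guarantee that a hyperplane $A_2$ containing $A_1 + \Span\{\mathbf{e}_1,\dots,\mathbf{e}_{j-1}\}$ and avoiding $\mathbf{e}_n$ exists. Concretely, if $A_1=\Span\{v\}$ with $v=(a_1,\dots,a_n)$, $a_1\neq 0$ but $a_j=\dots=a_{n-1}=0$ and $a_n\neq 0$, then every hyperplane $\ker\lambda$ with $\lambda_1=\dots=\lambda_{j-1}=0$ containing $A_1$ must satisfy $\lambda_n a_n=0$, hence $\lambda_n=0$, hence contains $\mathbf{e}_n$. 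So one must impose an additional generic condition; the paper handles this by requiring $a_j\neq 0$ (the factor $\ev_1^{-1}(D_+(x_j))$ in the definition of $U$ when $i=n$), and then explicitly exhibits a hyperplane not containing $\mathbf{e}_n$. This is a fixable omission rather than a fatal error, since the bad locus has positive codimension and one can simply shrink $V$, but as written your $V$ is not correctly specified in the $i=n$ case. Also note that $j=n$ is not actually a problematic boundary case: it forces $i<n$ and then $A_2=\Span\{\mathbf{e}_1,\dots,\mathbf{e}_{n-1}\}$ is forced and works unconditionally, so the only case requiring extra care is $i=n$.
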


\begin{proof}
When \(d_1=0\), \(M_0(p_1(X_{[i,j]}),p_1(X^{[n,1]}))\) is non-empty if and only if \(i=n\) and in this case it is a point. \[\rho_2:M_d(X_{[n,j]},X^{[n,1]})\to M_0(p_1(X_{[n,j]}),p_1(X^{[n,1]}))\] is surjective if and only \(M_d(X_{[n,j]},X^{[n,1]})\) is non-empty.

Now assume \(d_1\geq1\). 

\(X^{[n,1]}=\{(B_1,B_2)\}\), where we think of \(B_1,\ B_2\) as \[B_1=(0:\dots:0:1)\in\P(\C^n),\ B_2=(1:0:\dots:0)\in\P({\C^n}^*).\]
\(\ev_1: \M_{0,3}(\P(\C^n),d_1)\to\P(\C^n)\) maps \(\ev_2^{-1}(B_1)\) surjectively to \(\P(\C^n)\) and \(M_{d_1}(p_1(X_{[i,j]}),B_1)\) surjectively to \(p_1(X_{[i,j]})\). 

We write \(D_+(s)\) and \(V_+(s)\) for the non-vanishing and vanishing sets in \(\P(\C^n)\) for the function \(s\), respectively.

If \(i<n\), let \[U\coloneqq\ev_1^{-1}(D_+(x_1))\cap \mathring{M}_{d_1}(p_1(X_{[i,j]}),B_1);\] if \(i=n\), let \[U\coloneqq\ev_1^{-1}(D_+(x_j))\cap \ev_1^{-1}(D_+(x_1))\cap \mathring{M}_{d_1}(p_1(X_{[i,j]}),B_1).\] Note that \(p_1(X_{[i,j]})\) is cut out by \(x_{i+1}=\dots=x_n=0\), so \(U\) is non-empty and therefore dense in \(M_{d_1}(p_1(X_{[i,j]}),B_1)\). Let \(f_1\in U\). It suffices to show \(f_1\) is in the image.

Let \(A_1=(a_1:\dots:a_i:0:\dots:0)\in\P(\C^n)\) be \(\ev_1(f_1)\). Then \(a_1\neq0\), so \(A_1\nsubseteq B_2\). \(p_2(p_1^{-1}(A_1)\cap X_{[i,j]})\) is the set \[\{(y_1:\dots:y_n)\in\P({\C^n}^*): a_1 y_1+\dots+a_i y_i=0, y_1=\dots=y_{j-1}=0\}\] and  \[p_2(p_1^{-1}(B_1))=V_+(y_n).\] One checks that \[p_2(p_1^{-1}(A_1)\cap X_{[i,j]})\nsubseteq p_2(p_1^{-1}(B_1))\] (when \(i=n\) this is guaranteed by \(a_j\neq0\)). Therefore, there exists \(A_2\in p_2(X_{[i,j]})\) such that \(A_1\subseteq A_2\) and \(B_1\nsubseteq A_2\). Let \(A=(A_1,A_2)\), \(B=(B_1,B_2)\). By \Lemma{surjpts}, there exists \(f\in M_d(A,B)\) such that \(\rho_2(f)=f_1\). Since \(A\in X_{[i,j]}\), \(B\in X^{[n,1]}\), we have \(M_d(A,B)\subseteq M_d(X_{[i,j]},X^{[n,1]})\) and the conclusion follows. 
\end{proof}

\begin{cor}\label{cor:surj3}
    Assume \eqn{assumption} is satisfied, and \(M_d(X_{[i,j]},X^{[n,1]},g.D^{[k]})\) is non-empty. Then \[\rho_3^{g,k}: M_d(X_{[i,j]},X^{[n,1]},g.D^{[k]})\to M_{d_1}(p_1(X_{[i,j]}),p_1(X^{[n,1]}),g.p_1(D^{[k]}))\] is surjective.
\end{cor}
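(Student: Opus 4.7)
The plan is to extend the constructions of \Lemma{surjpts} and \Lemma{surj2} so as to compatibly place a third marked point. First, since the source of $\rho_3^{g,k}$ is projective and the target is separated, the image is closed, so it suffices to produce preimages for a dense subset of the target. I would therefore work with $f_1$ whose source $C_1$ is an irreducible $\P^1$ carrying markings $p_1, p_2, p_3$ sent respectively to $A_1 \in p_1(X_{[i,j]})$, $B_1$, and a point of $g.p_1(D^{[k]})$. The degenerate case $d_1 = 0$ is handled separately: the target is then a single constant map, so the preimage exists by the nonemptiness hypothesis on the source.

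Assume $d_1 \ge 1$. As in the proof of \Lemma{surj2}, I would choose $A_2 \in p_2(X_{[i,j]})$ with $A_1 \subset A_2$, $A_1 \nsubseteq B_2$, and $B_1 \nsubseteq A_2$, and apply \Lemma{surjpts} with $A = (A_1, A_2)$ and $B = X^{[n,1]}$ to lift $f_1$ to $\tilde f: \P^1 \to X$ of degree $(d_1, d_2')$ with $d_2' \le d_1 \le d_2$, given on a dense open subset by $\tilde f(a) = (f_1(a),\, f_1(a) + A_2 \cap B_2)$.

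The main step is a case split on $k$. For $k = 1$, since $g.D^{[1]}$ is cut out by a linear equation involving only first-factor coordinates, the hypothesis $f_1(p_3) \in g.p_1(D^{[1]})$ already forces $\tilde f(p_3) \in g.D^{[1]}$; I would place the third marking at $p_3$ on the main component and then attach $d_2 - d_2'$ curves of degree $(0,1)$ at generic points of $\P^1$, each carrying only the node as a special point. These attached components are non-contracted in $X$, hence stable, and collapse under $\rho$. For $k = 2$ (so $d_1 < d_2$), the defining equation of $g.D^{[2]}$ genuinely involves second-factor coordinates and $\tilde f(p_3)$ need not lie in $g.D^{[2]}$; the remedy is to attach at $p_3$ a degree-$(0,1)$ curve $E \subset p_1^{-1}(f_1(p_3)) \cong \P^{n-2}$ passing through $\tilde f(p_3)$ and through some $C \in g.D^{[2]} \cap p_1^{-1}(f_1(p_3))$ (such $C$ exists because $p_1: g.D^{[2]} \to \P^{n-1}$ is surjective), place the third marking at the point of $E$ mapping to $C$, and attach $d_2 - d_2' - 1 \ge 0$ further degree-$(0,1)$ curves at generic points to reach total degree $d$.

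The chief thing to verify is that $\rho_3^{g,k}(f) = f_1$ in the $k = 2$ case. The component $E$ carries only the node and the third marking, so it is stable in $X$ (non-contracted), but after composition with $p_1$ it becomes a contracted component with just two special points; the stabilization step of $\rho$ therefore collapses $E$ and sends the third marking to the image of the node, which is precisely $p_3$ on the main component. The other attached degree-boosting components collapse cleanly for the same reason. After confirming stability of $f$ throughout, closedness of the image upgrades density of preimages to the claimed surjectivity.
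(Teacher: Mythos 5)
Your plan — lift via \Lemma{surjpts}, track the third marking, then invoke closedness of the image — is a reasonable re-derivation, but the $k=1$ case has a genuine gap that the paper's shorter argument is designed to avoid. You observe (correctly) that because $g.D^{[1]}$ is defined by an equation in first-factor coordinates, any lift $\tilde f$ of $f_1$ automatically satisfies $\tilde f(p_3)\in g.D^{[1]}$ whenever $f_1(p_3)\in g.p_1(D^{[1]})$. The paper turns this same observation into a purely set-theoretic statement: since $D^{[1]}=p_1^{-1}(p_1(D^{[1]}))$, the source $M_d(X_{[i,j]},X^{[n,1]},g.D^{[1]})$ is exactly $\rho_2^{-1}$ of the target, so surjectivity of $\rho_3^{g,1}$ is inherited directly from \Lemma{surj2}, with no genericity or density considerations. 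Your route instead requires the locus of $f_1$ with irreducible source meeting the open conditions needed to run \Lemma{surjpts} to be dense in $M_{d_1}(p_1(X_{[i,j]}),p_1(X^{[n,1]}),g.p_1(D^{[1]}))$. That is not justified for arbitrary $g$ (and $g$ is not assumed general in this corollary): the constraint on $\ev_3$ cuts out a proper closed subvariety of $M_{d_1}(p_1(X_{[i,j]}),p_1(X^{[n,1]}))$, and a priori some of its irreducible components could lie entirely in the boundary or in the complement of your open conditions, in which case your construction never reaches them and closedness of the image does not save you. Replace the density step in the $k=1$ case with the set-theoretic reduction.

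For $k=2$ your argument is in substance the paper's, with two small differences. First, the paper invokes \Lemma{surj2} as a black box (possible because $d_1\le d_2-1$) to get a lift of degree $(d_1,d_2-1)$ in one step, and then attaches a single degree-$(0,1)$ curve in a fibre of $p_1$ carrying the third marking; your version with \Lemma{surjpts} plus several degree-boosting components is fine but slightly more bookkeeping. Second, you should explicitly handle the degenerate case $\tilde f(p_3)\in g.D^{[2]}$: then there is no nondegenerate line through $\tilde f(p_3)$ and $C$, and you should simply attach the remaining degree-boosting components and leave the third marking where it is, as the paper does. Your treatment of $d_1=0$ is fine. Note also that the density claim you do need for $k=2$ (that the lifting locus is dense in $M_{d_1}(p_1(X_{[i,j]}),p_1(X^{[n,1]}))$, whose third-marking constraint is vacuous since $p_1(D^{[2]})=\P^{n-1}$) is exactly the one already made and used in the proof of \Lemma{surj2}, so it is on the same footing as the paper there.
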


\begin{proof}
Non-emptiness of \(M_d(X_{[i,j]},X^{[n,1]},g.D^{[k]})\) implies non-emptiness of \(M_d(X_{[i,j]},X^{[n,1]})\).

When \(k=1\), since \(D^{[1]}=p_1^{-1}(p_1(D^{[1]}))\), each fibre of \(\rho_3^{g,1}\) is a fibre of \(\rho_2\). Surjectivity of \(\rho_2\) implies surjectivity of \(\rho_3^{g,1}\).

When \(k=2\), \eqn{assumption} implies \(d_1<d_2\). Let \[f_1\in M_{d_1}(p_1(X_{[i,j]}),p_1(X^{[n,1]}),g.p_1(D^{[2]}))=M_{d_1}(p_1(X_{[i,j]}),p_1(X^{[n,1]})).\] 

By \Lemma{surj2}, there exists \(\tilde{f}: \tilde{C}\to X\) in \(M_{(d_1,d_2-1)}(X_{[i,j]},X^{[n,1]})\) mapping to \(f_1: C\to X\). 

Let \(a\in\tilde{C}\) be the 3rd marked point and \(A=(A_1,A_2)=\tilde{f}(a)\). If \(A\in g.D^{[2]}\), by \cite[Lemma 12]{fulton1997notes} we can attach an additional component to \(\tilde{C}\) to obtain \(f\in M_d(X_{[i,j]},X^{[n,1]},g.D^{[2]})\). Otherwise, pick \(A_2'\in g.p_2(D^{[2]})\) such that \(A_1\subseteq A_2'\).  Let \(v\in A_2\setminus A_2'\), \(v'\in A_2'\setminus A_2\). Attach \(\P^1\) to \(\tilde{C}\) by identifying \((1:0)\) with \(a\) and map this copy of \(\P^1\) to \(X\) by \[(s:t)\mapsto (A_1, A_2\cap A_2'+\text{Span}\{s v+t v'\}).\] Replace the 3rd marked point with \((0:1)\) on the attached \(\P^1\) component. \cite[Lemma 12]{fulton1997notes} implies we get a map \(f\in M_d(X_{[i,j]},X^{[n,1]},g.D^{[k]})\). 

Note that the added component, among others, collapses after composing with \(p_1\), so \(\rho(f)=f_1\).
\end{proof}
 
\begin{lemma}\label{lemma:param} 
Let \(X_1,\ X_2,\ X_3\) be Schubert varieties in \(X\) (not necessarily in general position). Let \[\mathring{\rho}: \mathring{M}_d(X_1,X_2,X_3)\to \mathring{M}_{d_1}(p_1(X_1),p_1(X_2),p_1(X_3))\] be the restriction of \(\rho\). Let \(f_1\in \mathring{M}_{d_1}(p_1(X_1),p_2(X_2),p_1(X_3))\). Then \(\mathring{\rho}^{-1}(f_1)\) is rational whenever non-empty.
\end{lemma}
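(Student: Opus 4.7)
The plan is to parametrize the fibre $\mathring{\rho}^{-1}(f_1)$ explicitly as an open subset of a projective linear subspace, from which rationality is immediate.

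First, I would make the moduli space concrete. Because we are in $\mathring{M}_{0,3}$, the source curve is $\P^1$, and the three marked points $0,1,\infty$ rigidify automorphisms, so a point of $\mathring{M}_{0,3}(X,d)$ is simply a morphism $f\colon\P^1\to X\subseteq \P(\C^n)\times\P({\C^n}^*)$ of bidegree $(d_1,d_2)$. Writing $f=(f_1,f_2)$, a degree-$d_i$ map to $\P^{n-1}$ is given (uniquely up to scalar) by an $n$-tuple of polynomials of degree $\leq d_i$ that are coprime and not all of degree $<d_i$; this places it in the open locus $U_i\subseteq\P(H^0(\P^1,\cO(d_i))^{\oplus n})$. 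Fix $f_1=[s_1(t):\cdots:s_n(t)]$. A preimage under $\mathring\rho$ is then the data of $f_2=[y_1(t):\cdots:y_n(t)]\in U_2$ satisfying (a) the incidence relation $\sum_i s_i(t)y_i(t)=0$ identically in $t$, and (b) $f(a)\in X_i$ for $(a,i)\in\{(0,1),(1,2),(\infty,3)\}$.

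Next I would check that both (a) and (b) cut out linear conditions on the coefficients of the $y_i$. For (a), the product $\sum_i s_i y_i$ is a polynomial of degree $\leq d_1+d_2$ in $t$ whose coefficients are linear forms (with coefficients built from the fixed $s_i$'s) in the unknown coefficients of the $y_i$'s; demanding this polynomial vanish gives $d_1+d_2+1$ linear equations. For (b), each Schubert (or opposite Schubert) variety $X_{[i,j]}$ (resp. $X^{[i,j]}$) from \Section{def} is cut out in $\P(\C^n)\times\P({\C^n}^*)$ by the vanishing of some coordinates $x_\ell$ and some $y_m$. Since $f_1(a)$ is fixed, the $x_\ell$-conditions are either automatic (because $f_1\in\mathring M_{d_1}(p_1(X_1),p_1(X_2),p_1(X_3))$) or force the fibre to be empty; the $y_m$-conditions are of the form $y_m(a)=0$, which is a single linear equation in the coefficients of $y_m$.

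Therefore, letting $L\subseteq H^0(\P^1,\cO(d_2))^{\oplus n}$ denote the linear subspace cut out by all these equations, one has
\[
\mathring\rho^{-1}(f_1) \;=\; \P(L)\cap U_2.
\]
The right-hand side is an open subset of the projective space $\P(L)$, which is irreducible. Thus if $\mathring\rho^{-1}(f_1)$ is non-empty, it is a non-empty open subset of $\P(L)$, hence dense and rational, completing the proof.

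There is no serious obstacle; the only thing to verify with care is that the Schubert incidence conditions (b) really are linear after fixing $f_1$, which is exactly where the explicit description of $X_{[i,j]}$ and $X^{[i,j]}$ by coordinate hyperplanes from \Section{def} is essential.
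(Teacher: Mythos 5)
Your proof is correct and takes essentially the same approach as the paper: both parametrize the fibre by the coefficient data of the second component $f_2$, observe that the incidence relation with $X\subseteq\P(\C^n)\times\P({\C^n}^*)$ and the Schubert conditions at the three marked points are all linear in those coefficients, and conclude that the fibre is an open subset of a projective linear subspace, hence rational when non-empty. The only cosmetic difference is that the paper works with vectors $\mathbf{b}_i\in{\C^n}^*$ giving $\mathring\rho^{-1}(f_1)$ as an open subset of a linear subspace of $\P^{n(d_2+1)-1}$, while you phrase the same parametrization via polynomial coordinates $y_m(t)$.
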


\begin{proof}
\(f_1: \P^1\to\Gr(1,n)\) is of the form \[(s:t)\mapsto\text{Span}\{\sum_{i=0}^{d_1} \mathbf{a}_i s^i t^{d_1-i}\},\] where \(\mathbf{a}_i=(a_{i1},\dots, a_{in})\in\C^n\). Let \(f\in\mathring{\rho}^{-1}(f_1)\), then \(f\) is uniquely determined by \(f_2\coloneqq p_2\circ f\), where \[f_2((s:t))=\text{Span}\{\sum_{i=0}^{d_2} \mathbf{b}_i s^i t^{d_2-i}\}\] for some \(\mathbf{b}_0,\dots, \mathbf{b}_{d_2}\in{\C^n}^*\) such that \[\sum_{i=0}^{d_2} \mathbf{b}_i s^i t^{d_2-i}\neq \mathbf{0}\text{ for all }(s:t)\in\P^1,\] \[(\sum_{i=0}^{d_1} \mathbf{a}_i s^i t^{d_1-i})\cdot(\sum_{i=0}^{d_2} \mathbf{b}_i s^i t^{d_2-i})=0\text{ for all }(s:t)\in\P^1,\] \[\text{Span}\{\mathbf{b}_{0}\}\in p_2(X_1),\ \text{Span}\{\sum_{i=0}^{d_2}\mathbf{b}_{i}\}\in p_2(X_2),\ \text{Span}\{\mathbf{b}_{d_2}\}\in p_2(X_3).\] This parametrizes \(\mathring{\rho}^{-1}(f_1)\) as an open subset of the intersection of some hyperplanes in \(\P^{n(d_2+1)-1}\).
\end{proof}

\begin{lemma}\label{lemma:rational}
Let \(Y_1,Y_2,Y_3\) be linear subspaces in general position in \(\P^n\). Then \(M_e(Y_1,Y_2,Y_3)=\ev_1^{-1}(Y_1)\cap \ev_2^{-1}(Y_2)\cap \ev_3^{-1}(Y_3)\subseteq\M_{0,3}(\P^n,e)\) is either empty or rational.   
\end{lemma}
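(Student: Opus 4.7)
The plan is to parametrize the interior locus $\mathring{M}_e(Y_1,Y_2,Y_3) := M_e(Y_1,Y_2,Y_3)\cap M_{0,3}(\P^n,e)$ of stable maps with smooth source as an open subset of a projective linear subspace (so that it is either empty or rational), and then use the general position hypothesis to show that $\mathring{M}_e(Y_1,Y_2,Y_3)$ is dense in $M_e(Y_1,Y_2,Y_3)$. This will give that $M_e(Y_1,Y_2,Y_3)$ itself is rational or empty.

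First, I would set up the parametrization, in the same spirit as the proof of \Lemma{param}. After normalizing the source of every stable map in $M_{0,3}(\P^n,e)$ to be $\P^1$ with marked points at $0$, $1$, $\infty$, such an $f$ has the form $(s:t)\mapsto\Span\{\sum_{i=0}^{e}\mathbf{a}_i s^i t^{e-i}\}$ with $\mathbf{a}_i\in\C^{n+1}$. This identifies $M_{0,3}(\P^n,e)$ with the open subset of $\P^{(n+1)(e+1)-1}$ on which the coefficient polynomials have no common zero on $\P^1$. Under this identification, the three evaluations are $[\mathbf{a}_0]$, $[\sum_{i=0}^{e}\mathbf{a}_i]$, and $[\mathbf{a}_e]$, so the conditions $\ev_j(f)\in Y_j$ become linear conditions on the coefficients. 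Hence $\mathring{M}_e(Y_1,Y_2,Y_3)$ is an open subset of a projective linear subspace of $\P^{(n+1)(e+1)-1}$, and is therefore irreducible and rational (or empty).

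Next, I would argue density via Kleiman--Bertini. The evaluation morphism $\ev:\M_{0,3}(\P^n,e)\to(\P^n)^3$ has target a homogeneous variety under the transitive action of $\text{PGL}_{n+1}^3$, so for general translates of $Y_1\times Y_2\times Y_3$ (i.e., when the $Y_j$ are in general position), the preimage $M_e(Y_1,Y_2,Y_3)$ is either empty or equidimensional of the expected dimension $e(n+1)+n-\sum_j\codim Y_j$. Applying the same statement to the restriction of $\ev$ to each irreducible boundary stratum of $\M_{0,3}(\P^n,e)$, which has codimension at least one, shows that the contribution of the boundary to $M_e(Y_1,Y_2,Y_3)$ has strictly smaller dimension than that expected dimension. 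Consequently, if the expected dimension is negative then every stratum contribution is empty and $M_e(Y_1,Y_2,Y_3)=\emptyset$; otherwise, $\mathring{M}_e(Y_1,Y_2,Y_3)$ is non-empty and Zariski dense in $M_e(Y_1,Y_2,Y_3)$, so $M_e(Y_1,Y_2,Y_3)=\overline{\mathring{M}_e(Y_1,Y_2,Y_3)}$ is the closure of an irreducible rational variety, hence rational.

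The main obstacle is justifying the density step rigorously: dimension counting alone does not rule out $M_e(Y_1,Y_2,Y_3)$ acquiring a lower-dimensional irreducible component supported entirely in the boundary of $\M_{0,3}(\P^n,e)$. To handle this, one can invoke smoothness of $\M_{0,3}(\P^n,e)$ as a Deligne--Mumford stack combined with transversality of the cutting conditions $\ev_j\in Y_j$ under general position, or argue directly by an infinitesimal smoothing of nodes at each boundary stable map in $M_e(Y_1,Y_2,Y_3)$, to conclude that every point of $M_e(Y_1,Y_2,Y_3)$ lies in the closure of its interior.
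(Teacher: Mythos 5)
Your approach matches the paper's: parametrize the interior locus $\mathring{M}_e(Y_1,Y_2,Y_3)$ as an open subset of a projective linear subspace (exactly as in the proof of \Lemma{param}), and conclude from Kleiman transversality that it is dense. The concern you raise in the last paragraph, however — that a lower-dimensional irreducible component might be supported entirely in the boundary of $\M_{0,3}(\P^n,e)$ — is already excluded by the very Kleiman--Bertini statement you invoke: for general translates, $M_e(Y_1,Y_2,Y_3)$ is \emph{pure} of the expected dimension (not merely of at most that dimension), while its intersection with each boundary stratum is pure of strictly smaller dimension, so no irreducible component can lie entirely in the boundary. Hence every component meets $M_{0,3}(\P^n,e)$; since $\mathring{M}_e(Y_1,Y_2,Y_3)$ is irreducible whenever non-empty, $M_e(Y_1,Y_2,Y_3)$ has a single component and is rational. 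The paper records this compactly by citing Kleiman's Theorem 2 and Remark 7 to obtain reducedness and the fact that every component meets the interior. Your proposed fallbacks (smoothness of the Deligne--Mumford stack plus transversality, or infinitesimal smoothing of nodes) are correct in spirit but are detours you do not need once the equidimensionality statement in Kleiman's theorem is used in full.
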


\begin{proof}
When \(M_e(Y_1,Y_2,Y_3)\) is non-empty, by \cite[Theorem 2 and Remark 7]{kleiman:transversality}, it is reduced and every irreducible component meets \(M_{0,3}(\P^n,e)\). On the other hand, \(\mathring{M}_e(Y_1,Y_2,Y_3)=M_e(Y_1,Y_2,Y_3)\cap M_{0,3}(\P^n,e)\) is rational by an argument similar to the proof of \Lemma{param}. It follows that \(M_e(Y_1,Y_2,Y_3)\) is rational.
\end{proof}

We thank an anonymous referee for pointing out the following Proposition was proved independently in \cite{rosset2022quantum}.

\begin{prop}\label{prop:irred}
 For general \(g\), \(M_d(X_{[i,j]},X^{[n,1]},g.D^{[k]})\) is irreducible whenever non-empty.  
\end{prop}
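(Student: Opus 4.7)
The plan is to assemble the irreducibility of $Y := M_d(X_{[i,j]}, X^{[n,1]}, g.D^{[k]})$ from the three preceding lemmas. First, using the involution $\varphi$ of $X$ (which interchanges $p_1$ with $p_2$, swaps $D^{[1]}$ with $D^{[2]}$, and sends Schubert varieties to Schubert varieties), one obtains an isomorphism of $Y$ with a Gromov-Witten variety of degree $(d_2, d_1)$ for the $\varphi$-transformed data. Since $\varphi$ is an isomorphism of $X$, irreducibility is preserved under this reduction, so we may assume condition \eqn{assumption} holds. \Corollary{surj3} then gives that the restriction $\rho_3^{g,k}: Y \twoheadrightarrow Z$ is surjective, where $Z := M_{d_1}(p_1(X_{[i,j]}), p_1(X^{[n,1]}), g.p_1(D^{[k]}))$.

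For general $g \in G$, the three linear subvarieties $p_1(X_{[i,j]})$, $p_1(X^{[n,1]})$, and $g.p_1(D^{[k]})$ of $\P^{n-1}$ are in general position, so \Lemma{rational} yields that $Z$ is rational, hence irreducible. Moreover, \Lemma{param} shows that every non-empty fibre of the restriction $\mathring{\rho}: \mathring{Y} \to \mathring{Z}$ is rational and therefore irreducible; in particular, the generic fibre of $\rho_3^{g,k}$ is irreducible. Combined with the surjectivity of $\rho_3^{g,k}$ onto the irreducible $Z$, the standard fact that a dominant morphism onto an irreducible variety with irreducible generic fibre has a unique irreducible component of the source dominating the base produces a unique irreducible component $Y_0 \subseteq Y$ mapping onto $Z$.

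To conclude $Y = Y_0$, I would carry out a dimension count via Kleiman's transversality theorem: for generic $g$, the translate $g.D^{[k]}$ is in sufficiently general position that $\ev_3^{-1}(g.D^{[k]})$ meets $M_d(X_{[i,j]}, X^{[n,1]})$ properly, forcing $Y$ to be equidimensional of the expected dimension, which coincides with $\dim Y_0$ and rules out any other irreducible components of $Y$. The chief obstacle lies in this last step---excluding ``extra'' high-dimensional components of $Y$, either in the boundary of $\M_{0,3}(X,d)$ or lying over proper closed subvarieties of $Z$. Handling this cleanly requires the generality of $g$ through Kleiman's theorem, together with equidimensionality of $M_d(X_{[i,j]}, X^{[n,1]})$ and of the evaluation map $\ev_3$ from it onto its image.
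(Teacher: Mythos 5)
Your first two steps match the paper's strategy: reduce via the involution $\varphi$ to the situation \eqn{assumption}, then use \Corollary{surj3}, \Lemma{rational}, and \Lemma{param} to produce a unique irreducible component $Y_0 \subseteq Y$ that dominates $Z$. (You do gloss over one point in the process: \Lemma{param} only describes the fibre of $\mathring\rho$, and to conclude that the generic fibre of $\rho_3^{g,k}$ itself is irreducible one must first invoke Kleiman's theorem to see that every component of $Y$, hence of a general fibre, meets the interior $\mathring Y$ — this is exactly what the paper does.)

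The genuine gap is in the last step, as you yourself flag. Equidimensionality of $Y$ (which Kleiman does give) cannot by itself rule out additional components: one could have several components of the same expected dimension, only one of which dominates $Z$, while the others project to proper closed subsets. To exclude this, the paper introduces the auxiliary morphism $\rho_2 : M_d(X_{[i,j]},X^{[n,1]}) \to M_{d_1}(p_1(X_{[i,j]}),p_1(X^{[n,1]}))$ on the larger two-condition Gromov--Witten variety, establishes that its general fibre is irreducible of some expected dimension $m$, and then runs a dimension comparison for a hypothetical extra component $W$ of $Y$: because $W\subseteq M_d(X_{[i,j]},X^{[n,1]})$ and $Y$ is equidimensional, one obtains $\dim\rho_2(W) \geq \dim M_{d_1}(\ldots) - 1$, and the argument splits into two cases. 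When $k=1$, $D^{[1]} = p_1^{-1}(p_1(D^{[1]}))$ forces $\rho_2(W)$ inside the codimension-one target of $\rho_3^{g,1}$, so the inequality forces $W$ to dominate after all — contradiction. When $k=2$, the inequality must be an equality, so the general fibre of $W \to \rho_2(W)$ has dimension $m$; this forces $W$ to contain an entire irreducible $\rho_2$-fibre and hence to meet $Y_0$, contradicting the normality of $Y$ (Kleiman again), which guarantees that distinct components are disjoint. Your proposal does not contain these ideas — neither the auxiliary map $\rho_2$, nor the case split by $k$, nor the use of normality to make "meets $Y_0$" a contradiction — so the proof is incomplete exactly where you suspect it is.
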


\begin{proof}
Note that \(M_0(X_{[i,j]},X^{[n,1]},g.D^{[k]})\cong X_{[i,j]}\cap X^{[n,1]}\cap g.D^{[k]}=\varnothing\).

When \(d>0\), without loss of generality, we assume \eqn{assumption}.

By \cite[Remark 7]{kleiman:transversality}, \(M_d(X_{[i,j]},X^{[n,1]},g.D^{[k]})\) is normal. It then follows from  \cite[\href{https://stacks.math.columbia.edu/tag/033H}{Section 033H}]{stacks-project} that every connected component of \(M_d(X_{[i,j]},X^{[n,1]},g.D^{[k]})\) is irreducible. So it suffices to show \\\(M_d(X_{[i,j]},X^{[n,1]},g.D^{[k]})\) is connected.

By \cite[Theorem 2]{kleiman:transversality}, \(M_d(X_{[i,j]},X^{[n,1]},g.D^{[k]})\) is equidimensional and every component meets \(M_{0,3}(X,d)\) and hence \(\mathring{M}_d(X_{[i,j]},X^{[n,1]},g.D^{[k]})\). Consider the morphism \[\rho_3^{g,k}:M_d(X_{[i,j]},X^{[n,1]},g.D^{[k]})\to M_{d_1}(p_1(X_{[i,j]}),p_1(X^{[n,1]}),g.p_1(D^{[k]})).\] By \Lemma{param}, the general fibre intersected with \(\mathring{M}_d(X_{[i,j]},X^{[n,1]},g.D^{[k]})\) is connected, so at most one component of \(M_d(X_{[i,j]},X^{[n,1]},g.D^{[k]})\) maps dominantly.
On the other hand, since \(\rho_3^{g,k}\) is closed and surjective by \Corollary{surj3} and \(M_{d_1}(p_1(X_{[i,j]}),p_1(X^{[n,1]}),g.p_1(D^{[k]}))\) is irreducible by \Lemma{rational}, at least one component maps surjectively to it. Therefore, exactly one component \(Y\) maps surjectively to \\\(M_{d_1}(p_1(X_{[i,j]}),p_1(X^{[n,1]}),g.p_1(D^{[k]}))\). 

Now consider \[\rho_2: M_d(X_{[i,j]},X^{[n,1]})\to M_{d_1}(p_1(X_{[i,j]}),p_1(X^{[n,1]})).\] By \Lemma{fibrecomp} and \Lemma{surj2}, every irreducible component of the general fibre meets \(\mathring{M}_d(X_{[i,j]},X^{[n,1]})\), and it follows from \Lemma{param} that the general fibre is irreducible. Let \(U\) be an open dense subset of \(M_{d_1}(p_1(X_{[i,j]}),p_1(X^{[n,1]}))\) over which fibres of \(\rho_2\) are irreducible and of expected dimension \[m\coloneqq\dim M_d(X_{[i,j]},X^{[n,1]})-\dim M_{d_1}(p_1(X_{[i,j]}),p_1(X^{[n,1]})).\]
By \cite[Theorem 2]{kleiman:transversality}, every component of \(M_d(X_{[i,j]},X^{[n,1]},g.D^{[k]})\) meets \(\rho_2^{-1}(U)\).

Suppose \(Z\) is another component of \(M_d(X_{[i,j]},X^{[n,1]},g.D^{[k]})\). Since \(Z\cap\rho_2^{-1}(U)\) is non-empty, \(\rho_2(Z)\cap U\) is also non-empty and therefore dense in \(\rho_2(Z)\). 

By generic flatness, the general fibre of \(Z\to\rho_2(Z)\) has dimension \[m'\coloneqq\dim M_d(X_{[i,j]},X^{[n,1]},g.D^{[k]})-\dim \rho_2(Z).\] Note that \[m-m'=1+\dim \rho_2(Z)-\dim M_{d_1}(p_1(X_{[i,j]}),p_1(X^{[n,1]})).\] Since \(M_d(X_{[i,j]},X^{[n,1]})\supseteq Z\), we must have \(m\geq m'\), or equivalently \begin{equation}\label{eqn:geq}\dim \rho_2(Z)\geq\dim M_{d_1}(p_1(X_{[i,j]}),p_1(X^{[n,1]}))-1.\end{equation} 

When \(k=1\), \eqn{geq} is equivalent to \begin{equation}\label{eqn:k=1}\dim \rho_2(Z)\geq\dim M_{d_1}(p_1(X_{[i,j]}),p_1(X^{[n,1]}),g.p_1(D_1)).\end{equation} \(M_{d_1}(p_1(X_{[i,j]}),p_1(X^{[n,1]}),g.p_1(D_1))\) is irreducible by \Lemma{rational}, so \eqn{k=1} implies \\\(\rho_2(Z)=M_{d_1}(p_1(X_{[i,j]}),p_1(X^{[n,1]}),g.p_1(D_1))\), which is a contradiction. 

When \(k=2\), we have \[M_{d_1}(p_1(X_{[i,j]}),p_1(X^{[n,1]}),g.p_1(D_2))=M_{d_1}(p_1(X_{[i,j]}),p_1(X^{[n,1]})).\] Since \(Z\) doesn't map surjectively, \eqn{geq} must be an equality, which is equivalent to \(m'=m\). This implies \(Z\) contains an irreducible fibre of \(\rho_2\), but then \(Z\) meets \(Y\), which is a contradiction. 

Therefore, \(M_d(X_{[i,j]},X^{[n,1]},g.D^{[k]})\) is connected for \(g\) general in \(G\).
\end{proof}

\begin{thm}\label{thm:ratcon}
The general fibre of 
\begin{equation*}\label{resev3}\ev_3:M_d(X_{[i,j]},D^{[k]})\to\Gamma_d(X_{[i,j]},D^{[k]})\end{equation*} is rationally connected.
\end{thm}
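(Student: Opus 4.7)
The plan is to apply the theorem of Graber, Harris, and Starr \cite{GHS} to a suitable morphism $\rho$ restricted to the general fibre of $\ev_3$.

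First, by identifying the general fibre of $\ev_3$ with $M_d(X_{[i,j]}, D^{[k]}, p)$ for general $p\in\Gamma_d(X_{[i,j]}, D^{[k]})$, it suffices to show this fibre is rationally connected. Using the involution $\varphi$ of $X$, which exchanges $D^{[1]}\leftrightarrow D^{[2]}$ and swaps the components of $d$, we may assume that condition \eqn{assumption} holds. Then, by the $S_3$-symmetry on the marked points of $\M_{0,3}(X,d)$ combined with the $G$-equivariance of the evaluation maps, irreducibility of the general fibre $M_d(X_{[i,j]}, D^{[k]}, p)$ follows from \Proposition{irred}, and surjectivity of the restricted morphism
\begin{equation*}
    \rho\colon M_d(X_{[i,j]}, D^{[k]}, p) \longrightarrow M_{d_1}\bigl(p_1(X_{[i,j]}), p_1(D^{[k]}), p_1(p)\bigr)
\end{equation*}
follows from \Corollary{surj3}.

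Next, I would verify the hypotheses of Graber-Harris-Starr for $\rho$. The target is a Gromov-Witten variety in $\P^{n-1}$ defined by three linear subspaces in general position (thanks to generality of $p$), hence rational by \Lemma{rational} and in particular rationally connected. For the general fibre of $\rho$: by \Lemma{fibrecomp}, every irreducible component of the general fibre meets the dense open locus $\mathring{M}_d$, and by \Lemma{param}, the intersection $\mathring{\rho}^{-1}(f_1)$ is rational (hence irreducible). Consequently, the general fibre of $\rho$ is the closure of this irreducible rational subset, so it is itself irreducible and unirational, and in particular rationally connected. The Graber-Harris-Starr theorem then implies that the source $M_d(X_{[i,j]}, D^{[k]}, p)$ is rationally connected, completing the argument.

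The main technical obstacle is the reduction step, where I need to transfer the conclusions of \Proposition{irred} and \Corollary{surj3} — which are stated for configurations $M_d(X_{[i,j]}, X^{[n,1]}, g.D^{[k]})$ with two fixed Schubert varieties in opposite positions and a general translate of the divisor — to the configuration $M_d(X_{[i,j]}, D^{[k]}, p)$ in which the third constraint is a general point of $\Gamma_d$. This requires combining the $S_3$-symmetry of marked points in $\M_{0,3}(X,d)$ with $G$-equivariance and a Kleiman-Bertini-type generic position argument. Additional care will be needed in degenerate cases, for example when $d_1=0$ and the target Gromov-Witten variety collapses, so that the surjectivity and rationality statements must be checked by hand.
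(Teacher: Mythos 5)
Your overall strategy for the positive-degree case matches the paper's: apply Graber--Harris--Starr to the restriction of $\rho$ to the fibre $M_d(X_{[i,j]},D^{[k]},p)$, using \Corollary{surj3}, \Lemma{rational}, \Proposition{irred}, \Lemma{fibrecomp}, and \Lemma{param}. However, there is a gap that you flag but do not close, and in fact your GHS-based plan does not go through in that case.

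The problem is when $d_1=0$ (and $d_2>0$). Here the target $M_{d_1}\bigl(p_1(X_{[i,j]}),p_1(D^{[k]}),p_1(p)\bigr)$ is a single point, so ``the general fibre of $\rho$'' is the entire variety $M_d(X_{[i,j]},D^{[k]},p)$ and GHS is a tautology; you would just be asserting the conclusion. Moreover, you cannot invoke \Proposition{irred} here: when $d_1=0$, $\Gamma_d(X_{[i,j]},D^{[k]})$ is a proper Richardson subvariety of $X$ (see \Lemma{gamma} and \Section{curvenbhd}), so the ``general'' $p$ is constrained to a proper subvariety and is not a general translate of $X^{[n,1]}$; Kleiman's transversality theorem, on which the proof of \Proposition{irred} rests, does not apply. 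The paper instead handles this case by a direct argument that avoids GHS altogether: it uses the already-known irreducibility of the \emph{two-pointed} variety $M_d(X_{[i,j]},D^{[k]})$ (it is unirational by \cite[Prop.~3.2]{Buch2013}), applies \Lemma{fibrecomp} and \Lemma{reduced} to $\ev_3$ itself to see that the general fibre is reduced and every component meets $\mathring{M}_d$, and then concludes rationality directly from \Lemma{param} because the target GW variety in $\P^{n-1}$ collapses to a point.

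In the case $d_1\geq 1$, your transfer of \Proposition{irred} and \Corollary{surj3} to the configuration $M_d(X_{[i,j]},D^{[k]},p)$ does work, but the essential reason is worth making explicit: under \eqn{assumption}, $d_1\geq 1$ forces $d_2\geq 1$ as well, hence $\Gamma_d(X_{[i,j]},D^{[k]})=X$ by \Section{curvenbhd}, so the general $p$ is a general point of $X$ and can be written as $g.X^{[n,1]}$ with $g\in G$ general; the marking permutation $2\leftrightarrow 3$ then aligns things with the statements of \Proposition{irred} and \Corollary{surj3}. Without this observation the reduction is not justified. Finally, a small but real omission: you should invoke \Lemma{reduced} to see that the general fibre of $\rho$ is reduced before calling it ``rational'' and applying GHS; the paper is explicit about this.
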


\begin{proof}
When \(d=0\), the domain and target are both isomorphic to \(X_{[i,j]}\cap D^{[k]}\) and \(\ev_3\) is an isomorphism. 

When \(d>0\), without loss of generality, we assume \eqn{assumption}.

\item[Case 1: $d_1=0$.]

Assume \(M_d(X_{[i,j]},D^{[k]})\) is non-empty. By \Lemma{fibrecomp} and \Lemma{reduced}, there is a dense open subset \(U\subseteq \Gamma_d(X_{[i,j]},D^{[k]})\) such that for all \(u\in U\), \(\ev_3^{-1}(u)\) is reduced and every irreducible component meets \(\mathring{M}_d(X_{[i,j]},D^{[k]})\). On the other hand, since \(\mathring{M}_0(p_1(X_{[i,j]}),p_1(D^{[k]}),p_1(u))\) is a point,  \(\ev_3^{-1}(u)\cap\mathring{M}_d(X_{[i,j]},D^{[k]})=\mathring{M}_d(X_{[i,j]},D^{[k]},u)\) is rational by \Lemma{param}. Therefore, \(\ev_3^{-1}(u)\) is rational.

\item[Case 2: $d_1\geq1$.]

In this case $\Gamma_d(X_{[i,j]},D^{[k]})=\Gamma_d(X_{[i,j]})=X$. So the general fibre considered is 
\\$M_d(X_{[i,j]},D^{[k]},g.X^{[n,1]})$ for general $g$. 
By \Corollary{surj3}, \Lemma{rational} and \Proposition{irred}, the restriction \[\rho: M_d(X_{[i,j]},D^{[k]},g.X^{[n,1]})\to M_{d_1}(p_1(X_{[i,j]}),p_1(D^{[k]}),g.p_1(X^{[n,1]}))\] is a surjective morphism between complex irreducible varieties and the target is rational. By \Lemma{fibrecomp} and \Lemma{reduced}, the general fibre is reduced and every irreducible component meets \(\mathring{M}_d(X_{[i,j]},D^{[k]},g.X^{[n,1]})\). It then follows from \Lemma{param} that the general fibre is rational. \cite[Corollary 1.3]{GHS} implies that \(M_d(X_{[i,j]},D^{[k]},g.X^{[n,1]})\) is rationally connected.
\end{proof}

\begin{example}\label{example:birat}
When \(d_1=d_2=1\), for a general point \(C=(C_1,C_2)\in X\),
 \(\ev_1: M_d(X_{[i,j]}, D^{[k]}, C)\to X_{[i,j]}\) is birational.
\end{example}
\begin{proof}
Without loss of generality, assume \(k=1\).

Let \(Y=\{(U,V)\in X:\ U\subset C_2\}\) and \(Z=\{(U,V)\in X:\ C_1\subset V\}\). Since \(C\) is general, we may assume \[X_{[i,j]}\nsubseteq Y\cup Z\text{ and }C_1\notin p_1(D^{[1]}).\] Let \((A_1,A_2)\) be a point in \(X_{[i,j]}\) such that \[A_1\notin p_1(D^{[1]}),\ A_1\nsubseteq C_2,\text{ and }C_1\nsubseteq A_2.\] We shall construct a unique preimage \(\mu: \P^1\to X\) in \(\mathring{M}_d(X_{[i,j]}, D^{[1]}, C)\).

Note that \(\C^n=A_1\oplus(A_2\cap C_2)\oplus C_1\). \(A_1\) and \(C_1\) determines a line \(L\) in \(\Gr(1,n)\). \(A_1,\ C_1\notin p_1(D^{[1]})\) implies \(L\nsubseteq p_1(D^{[1]})\), so \(L\) intersects \(p_1(D^{[1]})\) at one point \(B_1\). Let \(\{v\}\) be a basis for \(B_1\). Since \(B_1\subset A_1\oplus C_1\), we can write  \(v=v_{0}+v_{\infty}\), where \(v_{0}\in A_1, v_{\infty}\in C_1\) are unique and nonzero. Let \(\mu: \P^1\to X\) be given by \[(s:t)\mapsto (\text{Span}\{t v_0+s v_\infty\}, \text{Span}\{t v_0+s v_\infty\}\oplus (A_2\cap C_2)).\] Then \(\mu\) is the unique degree \((1,1)\) morphism \(\mu: \P^1\to X\) such that \[\mu(0)=(A_1,A_2),\ \mu(1)\in D^{[1]},\ \mu(\infty)=(C_1,C_2).\]
\end{proof}

\begin{cor}\label{cor:cohomtriv}
\(\ev_3: M_d(X_{[i,j]},D^{[k]})\to\Gamma_d(X_{[i,j]},D^{[k]})\) is cohomologically trivial.
\end{cor}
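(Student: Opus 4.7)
The plan is to derive this corollary directly from \Theorem{ratcon} by invoking the standard Koll\'ar-type criterion used throughout the quantum $K$-theoretic Schubert calculus literature (see \cite{qkgrass}, \cite{Buch2013}, and the introduction of this paper): a surjective morphism $\phi\colon M \to N$ of projective varieties with rational singularities whose general fibre is rationally connected satisfies $R\phi_{*}\cO_{M} = \cO_{N}$, which is exactly the assertion of cohomological triviality.

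Applying this criterion to $\phi = \ev_3\colon M_d(X_{[i,j]}, D^{[k]}) \to \Gamma_d(X_{[i,j]}, D^{[k]})$ requires four inputs. Surjectivity is built into the definition of $\Gamma_d(X_{[i,j]}, D^{[k]})$ as a closure of an image. Rational singularities of the target are supplied by \Corollary{ratsing}, which identifies $\Gamma_d(X_{[i,j]}, D^{[k]})$ with a Richardson variety. Rational connectedness of the general fibre is precisely \Theorem{ratcon}. The remaining ingredient is rational singularities of the source $M_d(X_{[i,j]}, D^{[k]})$; this is a fibre product cut out in the smooth Kontsevich space $\M_{0,3}(X,d)$ by the flat evaluation morphism $(\ev_1, \ev_2)$ against the product $X_{[i,j]} \times D^{[k]}$ of a $B$-stable Schubert variety with an opposite $B^-$-stable Schubert divisor. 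Since both factors have rational singularities, so does their product, and the preservation argument under flat base change (as recorded e.g.\ in \cite{Buch2013}) then gives the desired singularity statement for the source.

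Essentially all of the geometric content of this corollary has already been distilled into \Theorem{ratcon}; the remainder is formal bookkeeping. The only item requiring mild care is ensuring that the reference used for rational singularities of $M_d(X_{[i,j]}, D^{[k]})$ does not implicitly assume $X$ is cominuscule (since here $X = \Fl(1,n-1;n)$ is not), but the relevant arguments in \cite{Buch2013} apply uniformly once the defining Schubert varieties are opposite, which is our situation.
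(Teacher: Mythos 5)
Your proposal follows the paper's approach exactly: the paper proves this corollary by citing \cite[Theorem~3.1]{qkgrass} together with \cite[Proposition~5.2]{Buch2013}, which together are precisely the Koll\'ar-type criterion you state, and you correctly feed into it surjectivity, rational singularities of the target via \Corollary{ratsing}, and rational connectedness of the general fibre via \Theorem{ratcon}. One detail in your unpacking of the remaining input is off: $(\ev_1,\ev_2)\colon \M_{0,3}(X,d)\to X^2$ is in general \emph{not} flat (its fibre dimension jumps over special loci), and $\M_{0,3}(X,d)$ is only orbifold-smooth rather than smooth, so ``preservation of rational singularities under flat base change'' is not the mechanism by which \cite{Buch2013} establishes that $M_d(X_{[i,j]},D^{[k]})$ has rational singularities. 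Since you ultimately cite \cite{Buch2013} for this input anyway — and, as you note, its argument for opposite Schubert conditions is not restricted to the cominuscule case — the proposal's conclusion is sound, but the flat-base-change justification should be replaced with a direct citation.
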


\begin{proof}
    This follows from \Theorem{ratcon} and \cite[Theorem 3.1]{qkgrass} (see also \cite[Proposition 5.2]{Buch2013}).
\end{proof}

\section{Chevalley Formula for \(T\)-equivariant Quantum \(K\)-theory}\label{sec:chev}

See \cite{chriss-ginzburg} for an introduction to equivariant \(K\)-theory and \cite[Section 2]{qkchev} for a summary of the (equivariant) (quantum) \(K\)-theory of flag varieties. We will use the same set-up and notations as in \cite{qkchev}.

\subsection{Pullbacks along Morphisms of Algebraic Groups}\label{sec:pullback}  
Let \(G, H\) be algebraic groups and \(f: G\to H\) a morphism. Let \(Z\) be an \(H\)-variety. Then \(Z\) is also a \(G\)-variety with the action given by \(g.z\coloneqq f(g).z\) for all \(g\in G,\ z\in Z\), and an \(H\)-equivariant \(\cO_Z\)-module \(\cF\) is naturally also a \(G\)-equivariant \(\cO_Z\)-module, which we denote by \(f^*\cF\). \([\cF]\mapsto [f^*\cF]\) defines a morphism \(f^*:K_H(Z)\to K_G(Z)\), where \(K_H(Z)\) is the Grothendieck group of \(H\)-equivariant coherent \(\cO_Z\)-modules. \(K_H(Z)\) is a module over the Grothendieck ring \(K^H(Z)\) of \(H\)-equivariant vector bundles over \(Z\), and \(f^*\) is compatible with the module structures in the sense that the diagram
\[\begin{CD}K^H(Z)\times K_H(Z)     @> >>  K_H(Z)\\@VVf^*\times f^*V @VVf^*V       \\ K^G(Z)\times K_G(Z) @> >> K_G(Z)\end{CD}\] commutes. 

\begin{lemma}\label{lemma:diags}
    Let \(G, H\) be algebraic groups and \(f: G\to H\) a morphism. 
    \begin{enumerate}
        \item Let \(\lambda:H\to GL(V)\) be a representation, and \([V_\lambda]\) the corresponding class in \(K_H(\pt)\). Then \[f^*([V_\lambda])=[V_{\lambda\circ f}].\]
        \item Let \(Z\) be an \(H\)-variety and \(\Omega\subseteq Z\) an \(H\)-stable closed subvariety, then \[f^*([\cO_\Omega])=[\cO_\Omega].\]
        \item Given \(\phi: Y\to Z\) a flat \(H\)-equivariant morphism of \(H\)-varieties, the diagram \[\begin{CD}K_H(Z)     @>f^*>>  K_G(Z)\\@VV\phi^*V @VV\phi^*V       \\ K_H(Y) @>f^*>> K_G(Y)\end{CD}\] commutes.
        \item Given \(\phi: Y\to Z\) a proper \(H\)-equivariant morphism of \(H\)-varieties, the diagram \[\begin{CD}K_H(Y)     @>f^*>>  K_G(Y)\\@VV\phi_*V @VV\phi_*V       \\ K_H(Z) @>f^*>> K_G(Z)\end{CD}\] commutes. 
    \end{enumerate}
\end{lemma}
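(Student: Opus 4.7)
The unifying observation behind all four parts is that at the level of sheaves, $f^*\cF$ has \emph{the same} underlying $\cO_Z$-module as $\cF$; only the equivariance datum is transported, from an $H$-linearization $\sigma_H : \mu_H^*\cF \xrightarrow{\sim} \mathrm{pr}_Z^*\cF$ on $H\times Z$ to the $G$-linearization obtained by pulling back along $f\times \mathrm{id}_Z : G\times Z\to H\times Z$. With this in mind, all four statements reduce to checking that a standard construction on underlying sheaves is compatible with the transport of linearizations. I would state this compatibility as a preliminary observation and then dispatch each item in turn.

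For part (1), $V_\lambda$ is the one-point $H$-equivariant sheaf whose linearization is $\lambda$; unwinding the definition, $f^*V_\lambda$ has linearization $\lambda\circ f$, which is exactly $V_{\lambda\circ f}$. For part (2), $\Omega\subseteq Z$ being $H$-stable means the $H$-action on $Z$ restricts to $\Omega$, and hence so does the $G$-action (which factors through $f$); the natural linearization of $\cO_\Omega$ as a $G$-equivariant sheaf is the one induced from the $H$-linearization by $f$, which is precisely the definition of $f^*[\cO_\Omega]$.

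For parts (3) and (4) I would invoke the fact that flat pullback and proper pushforward of quasi-coherent sheaves commute with the base-change $f\times \mathrm{id}$ on the $H$- (resp.\ $G$-) equivariance square. More explicitly, for (3), if $\sigma$ is the $H$-linearization of $\cF$ on $Z$, then $\phi^*f^*\cF$ and $f^*\phi^*\cF$ agree as $\cO_Y$-modules and the two linearizations one obtains (either pull back along $f$ first and then along $\phi$, or vice versa) coincide because $\phi$ is $H$-equivariant and flat base change commutes with the group action. For (4), the analogous statement for proper pushforward uses $H$-equivariant proper base change: $\phi_*$ of the linearization pulled back along $f$ equals the pullback along $f$ of $\phi_*$ of the linearization. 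These identities extend from sheaves to $K$-theory classes because $f^*$, $\phi^*$, and $\phi_*$ are all additive on short exact sequences (using flatness in (3) and Grothendieck's spectral sequence, together with $K_H(Z)=K_H^0(Z)$ via resolution by equivariant locally free sheaves where needed, in (4)).

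None of the steps involve a serious obstacle; the only thing to be careful with is the bookkeeping of equivariant structures and, in (4), the exactness issue when passing from sheaves to $K$-theory classes, which is handled by taking an equivariant locally free resolution (or more directly by the standard construction of $\phi_*$ on $K_H$ via alternating sums of higher direct images, all of which commute with $f^*$). The module-compatibility diagram asserted before the lemma is likewise immediate from the fact that $f^*(\cE\otimes\cF)=f^*\cE\otimes f^*\cF$ at the level of sheaves with transported linearizations.
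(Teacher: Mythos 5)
Your proof is correct; the key observation that $f^*\cF$ has the same underlying $\cO_Z$-module as $\cF$ with only the linearization transported along $f\times\mathrm{id}_Z$ is exactly the right point, and each of the four parts follows as you describe. The paper itself dispatches the lemma with the single line ``This is clear,'' so your argument is simply the natural unwinding of definitions that the author leaves to the reader.
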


\begin{proof}
    This is clear.
\end{proof}

\subsection{Symmetry of Gromov--Witten Invariants}\label{sec:symm}

By \Corollary{map}, composing with \(\varphi\) gives an isomorphism \[\Phi: M_d\to M_{\varphi_*d}.\] \(\varphi\) and \(\Phi\) are not \(T\)-equivariant with repect to the usual \(T\)-actions. However, they become equivariant if we modify the \(T\)-actions on the targets by composing with \(\tilde{\varphi}:T\to T\). Let \(X'\) and \(M_{\varphi_*d}'\) be \(X\) and \(M_{\varphi_*d}\) equipped with the modified \(T\)-actions, respectively, and \(\ev_i': M_{\varphi_*d}'\to X'\) be the evaluation maps. Let \[\varphi':X\to X'\text{ and }\Phi': M_d\to M_{\varphi_*d}'\] be \(\varphi\) and \(\Phi\) with the \(T\)-actions on the targets modified, respectively. 

\begin{defn}
    Define \(\varphi^*:K_T(X)\to K_T(X)\) by \[\varphi^*\coloneqq {\varphi'}^*{\tilde{\varphi}}^*,\] where \({\tilde{\varphi}}^*: K_T(X)\to K_T(X')\) is an example of the construction in \Section{pullback}.
\end{defn}

Recall that for \(d=(d_1,d_2)\in H_2(X)^+\), \(\varphi_*d=(d_2,d_1).\)

\begin{lemma}\label{lemma:symmI} For \(\sigma_1,\dots,\sigma_m\in K^T(X),\)
\[I_d^T(\varphi^*\sigma_1,\dots,\varphi^*\sigma_m)=\tilde\varphi^*I_{\varphi_*d}^T(\sigma_1,\dots,\sigma_m).\]
\end{lemma}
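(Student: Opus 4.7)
The plan is to reduce both sides to a common intermediate expression on the ``modified'' space $M_{\varphi_* d}'$, using the fact that $\Phi': M_d \to M_{\varphi_* d}'$ is a $T$-equivariant isomorphism intertwining the evaluation maps in the sense that
\[
\ev_i \circ \Phi' \;=\; \varphi' \circ \ev_i \qquad (i=1,\dots,m),
\]
as $T$-equivariant morphisms from the standard-action spaces to the modified-action spaces. The main ingredient is the functoriality of $\tilde\varphi^*$ recorded in \Lemma{diags}.

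Starting from the left-hand side, I would first unfold $\varphi^* = \varphi'^*\tilde\varphi^*$ to write
\[
I_d^T(\varphi^*\sigma_1,\dots,\varphi^*\sigma_m) \;=\; \chi_{M_d}^T\!\left(\prod_i \ev_i^*\, \varphi'^*\, \tilde\varphi^*\sigma_i\right).
\]
Then the $T$-equivariant identity $\ev_i^* \varphi'^* = \Phi'^* \ev_i^*$ (the pullback version of $\ev_i\circ\Phi'=\varphi'\circ\ev_i$) combined with the fact that $\Phi'^*$ is a ring homomorphism lets me pull the $\Phi'^*$ outside the product:
\[
= \chi_{M_d}^T\!\left(\Phi'^* \prod_i \ev_i^*\, \tilde\varphi^*\sigma_i\right).
\]
Since $\Phi'$ is a $T$-equivariant isomorphism, the projection formula (or directly $\pi_{M_d} = \pi_{M_{\varphi_*d}'}\circ\Phi'$ together with $\Phi'_*\Phi'^*=\id$) yields $\chi_{M_d}^T\circ\Phi'^* = \chi_{M_{\varphi_*d}'}^T$, so
\[
= \chi_{M_{\varphi_*d}'}^T\!\left(\prod_i \ev_i^*\, \tilde\varphi^*\sigma_i\right).
\]

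For the right-hand side, I would apply \Lemma{diags}: part (3) gives $\tilde\varphi^*\ev_i^*\sigma_i = \ev_i^*\tilde\varphi^*\sigma_i$, and part (4) gives that $\tilde\varphi^*$ commutes with the equivariant pushforward $\chi_{M_{\varphi_*d}}^T$ to a point. Combining these (and using that $\tilde\varphi^*$ is a ring homomorphism on $K$-theory) yields
\[
\tilde\varphi^* I_{\varphi_*d}^T(\sigma_1,\dots,\sigma_m)
= \chi_{M_{\varphi_*d}'}^T\!\left(\prod_i \ev_i^*\, \tilde\varphi^*\sigma_i\right),
\]
which matches the expression above and completes the proof.

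The main thing to be careful about is bookkeeping the two $T$-actions on $X$ and on $M_{\varphi_* d}$ and keeping track of which equivariant $K$-theory each class lives in at each step. A technical point worth flagging is that \Lemma{diags}(3) is stated for flat morphisms, whereas $\ev_i$ on $\M_{0,3}(X,d)$ is not obviously flat; however, the identity $\tilde\varphi^*\ev_i^*\sigma = \ev_i^*\tilde\varphi^*\sigma$ really only reflects that $\tilde\varphi^*$ is a relabeling of the $T$-equivariant structure that commutes with any $K$-theoretic operation, so this subtlety does not obstruct the argument.
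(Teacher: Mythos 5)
Your proof is correct and follows essentially the same route as the paper's: unfold $\varphi^*={\varphi'}^*\tilde\varphi^*$, use the commutative square $\ev_i\circ\Phi'=\varphi'\circ\ev_i'$ together with $\chi_{M_d}^T\circ\Phi'^*=\chi_{M'_{\varphi_*d}}^T$ to reduce the left side to $\chi_{M'_{\varphi_*d}}^T\bigl(\prod_i {\ev'_i}^*\tilde\varphi^*\sigma_i\bigr)$, then apply \Lemma{diags} to identify this with $\tilde\varphi^*I_{\varphi_*d}^T(\sigma_1,\dots,\sigma_m)$. The one place the paper diverges from your write-up is the flatness concern you flag at the end: rather than wave it away by saying $\tilde\varphi^*$ is ``just a relabeling,'' the paper actually checks the hypothesis of \Lemma{diags}(3) by observing that the evaluation maps are $G$-equivariant maps to the homogeneous space $X$, hence flat by generic flatness; that is a cleaner way to close the gap, though your informal justification is also correct in spirit since all $\tilde\varphi^*$ does is reinterpret the group action.
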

\begin{proof}
The evaluation maps are \(G\)-equivariant and \(G\) acts on \(X\) transitively. By generic flatness, the evaluation maps are flat. Since 
\[
    \begin{CD}
        X@>\varphi'>> X'\\@AA{\ev_i}A @AA{ev'_i}A\\M_d@>\Phi'>>M'_{\varphi_*d}\\@V{pr}VV @V{pr'}VV\\\pt @>{id}>> \pt
    \end{CD}
\] is a commutative diagram of proper, flat, \(T\)-equivariant morphisms,
\begin{align*}
    I_d^T(\varphi^*\sigma_1,\dots,
    \varphi^*\sigma_m)&={pr}_*({\ev_1}^*{\varphi'}^*{\tilde{\varphi}}^*\sigma_1\cdots{\ev_m}^*{\varphi'}^*{\tilde{\varphi}}^*\sigma_m)
    \\&={pr}_*{\Phi'}^*({ev'_1}^*{\tilde{\varphi}}^*\sigma_1\cdots {ev'_m}^*{\tilde{\varphi}}^*\sigma_m)
    \\&={pr}'_*({ev'_1}^*{\tilde{\varphi}}^*\sigma_1\cdots {ev'_m}^*{\tilde{\varphi}}^*\sigma_m).\label{eqn:pr_2_*}
\end{align*}
By \Lemma{diags}, this is then equal to \(\tilde{\varphi}^*I_{\varphi_*d}^T(\sigma_1,\dots,\sigma_m)\).
\end{proof}

\subsection{Chevalley Formula for \(T\)-equivariant Quantum \(K\)-theory}

\begin{notation}\label{notation:tilde}
    From now on, when we write ``\(\equiv\)'' or ``\(\not\equiv\)'', it shall be understood that the equivalence is taken modulo \(n\).

    Let \[\widetilde{W^P}\coloneqq\{[i,j]\in\Z\times\Z: i\not\equiv j\}.\]  For \(w=[i,j]\in\widetilde{W^P}\), we define \[\cO^{w}\coloneqq q^{d(w)}[\cO_{X^{\overline w}}]\in {QK_T(X)}_q,\] where \(\overline{w}\coloneqq[\overline{i},\overline{j}]\in W^P\) is defined by \(\overline{i}\equiv i\) and \(\overline{j}\equiv j\), \[d(w)\coloneqq(\frac{i-\overline{i}}{n},\frac{\overline{j}-j}{n}),\ q^{(d_1,d_2)}\coloneqq {q_1}^{d_1}{q_2}^{d_2}\text{ for }d_1,d_2\in\Z,\] and the subscript \(q\) stands for localization with respect to \(\{q^d:\ d\in \Z^2_{\geq0}\}.\) Recall that \(\varepsilon_i: T\to \C^*\) is the character that sends a diagonal matrix to its \(i\)-th diagonal entry. We let \[\varepsilon_i\coloneqq\varepsilon_{\overline{i}}\text{ for }i\in\Z,\]
    and write \(\C_\eta\) for the one-dimensional \(T\)-representation corresponding to the character \(\eta\), and \([\C_\eta]\) for the corresponding class in \(K^T(\pt)\). 
    
    Recall that \(K^T(X)\) is a free module over \(K^T(\pt)\). The classes \(\cO^{w},\ w\in W^P\) form a basis. For \(w\in W^P\), let \(\cO_w^\vee\in K^T(X)\) be the basis element dual to \(\cO^w\) in the sense that \(\chi^T_X(\cO^u\cdot\cO_v^\vee)=\delta_{uv}\) for \(u,\ v\in W^P\), where \(\chi_X\) is the sheaf Euler characteristic map. Let \(\mathcal I_{\partial X_w}\subseteq \cO_{X_w}\) be the ideal sheaf of \(\partial X_w\coloneqq X_w\setminus BwP\). Then \(\cO_w^\vee=[\mathcal{I}_{\partial X_w}]\) (see \cite{Brion}). We extend the definition to allow for \(w\in \widetilde{W^P}\) by letting \[\cO^\vee_{w}\coloneqq q^{d(w)}\cO^\vee_{\overline w}\in {QK_T(X)}_q.\] 

    For \(i=1,\ 2\), we will write \(\cO^{[i]}\) for the class of \(\cO_{D^{[i]}}\) in \(K^T(X)\).
\end{notation}

The goal of this subsection is to prove the following \(T\)-equivariant Chevalley formula and derive some immediate consequences.

\begin{thm}\label{thm:qkTchev} In \({QK_T(X)}_q\), for \([i,j]\in \widetilde{W^P}\), the quantum multiplication by divisor classes is given by
    \begin{equation}\label{eqn:qkchev1}\cO^{[i,j]}\star\cO^{[1]}=\begin{cases}(1-[\C_{\varepsilon_i-\varepsilon_1}])\cO^{[i,j]}+[\C_{\varepsilon_i-\varepsilon_1}]\cO^{[i+1,j]}&\text{if }i+1\not\equiv j\\ (1-[\C_{\varepsilon_i-\varepsilon_1}])\cO^{[i,j]}+[\C_{\varepsilon_i-\varepsilon_1}](\cO^{[i+1,j-1]}+\cO^{[i+2,j]}-\cO^{[i+2,j-1]})&\text{if }i+1\equiv j\end{cases},\end{equation}
    \begin{equation}\label{eqn:qkchev2}\cO^{[i,j]}\star\cO^{[2]}=\begin{cases}(1-[\C_{\varepsilon_n-\varepsilon_j}])\cO^{[i,j]}+[\C_{\varepsilon_n-\varepsilon_j}]\cO^{[i,j-1]}&\text{if }i+1\not\equiv j\\ (1-[\C_{\varepsilon_n-\varepsilon_j}])\cO^{[i,j]}+[\C_{\varepsilon_n-\varepsilon_j}](\cO^{[i+1,j-1]}+\cO^{[i,j-2]}-\cO^{[i+1,j-2]})&\text{if }i+1\equiv j\end{cases}.\end{equation}
\end{thm}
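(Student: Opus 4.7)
The plan is to compute \(\cO^{[i,j]}\star\cO^{[1]}\) directly from the defining Gromov-Witten data of the quantum \(K\)-product, using identity \eqn{qclassical} to reduce each \(T\)-equivariant \(K\)-theoretic Gromov-Witten invariant to a classical push-forward on \(X\), and then applying Lenart--Postnikov's equivariant Chevalley formula \cite{LP}. Since both sides of \eqn{qkchev1} and \eqn{qkchev2} are homogeneous with respect to the \(q\)-scaling convention of \Notation{tilde}, it suffices to establish the formulas for \([i,j]\in W^P\). Moreover, I would deduce \eqn{qkchev2} from \eqn{qkchev1} via the involution \(\varphi\) of \Section{def}: by \Lemma{symmI}, pullback along \(\varphi\) is a \(\tilde\varphi^*\)-twisted ring automorphism of \(QK_T(X)\) that swaps \(\cO^{[1]}\) with \(\cO^{[2]}\) and sends \(\cO^{[i,j]}\) to \(\cO^{\iota[i,j]}\); applying it termwise to \eqn{qkchev1} and reindexing yields \eqn{qkchev2}.

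For \eqn{qkchev1} I would write
\[\cO^{[i,j]}\star\cO^{[1]}=\sum_{w\in W^P}\sum_{d\geq 0} N^{w,d}_{[i,j],[1]}\,q^d\,\cO^w\]
and recover the structure constants \(N^{w,d}_{[i,j],[1]}\) from the three-point invariants \(I_d^T(\cO^{[i,j]},\cO^{[1]},\cO^\vee_w)\) via the quantum \(K\)-theory formalism of \cite[Section 2]{qkchev}. Since \(\cO^{[1]}\) is the class of a Schubert divisor, \Corollary{cohomtriv} yields identity \eqn{qclassical}, so each such invariant equals \(\chi_X^T\bigl([\cO_{\Gamma_d(X_{[i,j]},D^{[1]})}]\cdot\cO^\vee_w\bigr)\). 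By \Lemma{gamma} and the case analysis of \Section{curvenbhd}, \(\Gamma_d(X_{[i,j]},D^{[1]})\) is either the Richardson variety \(\Gamma_d(X_{[i,j]})\cap D^{[1]}\) (when \(d_1=0\)) or a full Schubert variety associated to a shifted pair \([i',j']\) (when \(d_1>0\)), so its \(K\)-class is explicit in \(K_T(X)\).

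At this point the computation is entirely classical: I would insert the Lenart--Postnikov Chevalley formula \cite{LP} to evaluate \(\chi_X^T\bigl([\cO_{\Gamma_d(X_{[i,j]},D^{[1]})}]\cdot\cO^\vee_w\bigr)\) term by term. I expect only the degrees \(d=0\) and \(d=(1,0)\) to yield nonzero contributions: the \(d=0\) part reproduces the classical Chevalley coefficient \((1-[\C_{\varepsilon_i-\varepsilon_1}])\cO^{[i,j]}+[\C_{\varepsilon_i-\varepsilon_1}]\cO^{[i+1,j]}\), while the \(d=(1,0)\) part accounts for the quantum correction visible in the second case. Geometrically, when \(i+1\not\equiv j\pmod n\) the shift \([i,j]\mapsto[i+1,j]\) still lies in \(W^P\) and no wrap-around occurs; when \(i+1\equiv j\pmod n\) the shift crosses the parabolic boundary, which in the quantum product produces the three-term combination \(\cO^{[i+1,j-1]}+\cO^{[i+2,j]}-\cO^{[i+2,j-1]}\), with the latter two terms indexed in \(\widetilde{W^P}\setminus W^P\) and hence carrying a \(q\)-factor under \Notation{tilde}.

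The main obstacle is bookkeeping rather than conceptual: the quantum \(K\)-product is defined implicitly via an infinite system of pairings that in principle must be inverted to extract structure constants from three-point invariants. To avoid doing this inversion directly, I would appeal to \cite[Section 5.3]{qkchev}, which shows that a Chevalley formula is uniquely determined by the underlying three-point invariants together with the \(q=0\) specialization (which is fixed by \cite{LP}); verifying the proposed formula against the invariants computed above then completes the proof, with \eqn{qkchev2} following from the \(\varphi\)-symmetry as outlined.
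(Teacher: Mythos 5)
Your high-level plan is the same as the paper's: reduce the invariants via \eqn{qclassical} (from \Corollary{cohomtriv}) and the Lenart--Postnikov classical Chevalley formula, work over \([i,j]\in W^P\) by homogeneity, and deduce \eqn{qkchev2} from \eqn{qkchev1} using the involution \(\varphi\) via \Lemma{symmI} and \Lemma{diags}. However, the step that actually connects the invariants to the structure constants is missing. The coefficients in \(\cO^{[i,j]}\star\cO^{[1]}=\sum N^{w,d}\,q^d\cO^w\) are not the three-point invariants \(I_d^T(\cO^{[i,j]},\cO^{[1]},\cO^\vee_w)\); you acknowledge this, but your proposed remedy --- appealing to \cite[Section~5.3]{qkchev} --- does not supply a verification mechanism, since that section shows a Chevalley formula determines the ring, not how to check a candidate formula against the invariants. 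The mechanism the paper uses is \cite[Proposition~2.3]{qkchev}: \(\sigma_1\star\sigma_2=\Psi^{-1}(\sigma_1\odot\sigma_2)\), with \(\Psi\) the explicit \(K^T(\mathrm{point})(\!(q)\!)\)-module automorphism of \Definition{circPsi} built from curve neighborhoods \(\Gamma_d(X^u)\). To avoid inverting, one proves the equivalent identity \(\cO^{[i,j]}\odot\cO^{[1]}=\Psi(\text{claimed RHS})\) (this is \eqn{odot1}), matching coefficients of \(\cO^w\) for all \(w\in\widetilde{W^P}\). That in turn requires (i) computing \(I^T(\cO^{[i,j]},\cO^{[1]};\cO^\vee_w)\), which the paper does by expanding \(\cO^\vee_w\) in the \(\{\cO_u\}\) basis via Deodhar's inversion formula (\Corollary{expand}) together with the explicit description of \(I(v)\) in \Lemma{I}, and then applying \Proposition{GW}; and (ii) tabulating \(\Gamma_d(X^u)\) over all degrees to evaluate \(\Psi\). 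Your proposal mentions neither \(\Psi\) nor the dual-basis expansion, so the argument does not close.

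Two smaller points: your expectation that only \(d=0\) and \(d=(1,0)\) contribute is incorrect --- for \([i,j]=[n,1]\) the terms \(\cO^{[n+1,0]}\) and \(\cO^{[n+2,0]}\) carry \(q_1q_2\). Also, the dichotomy \(i+1\equiv j\) versus \(i+1\not\equiv j\) concerns crossing the parabolic wall, not whether the index \(i+1\) exceeds \(n\): for \([n,j]\) with \(j\neq 1\) the first case applies even though \(\cO^{[n+1,j]}=q_1\cO^{[1,j]}\) carries a \(q_1\).
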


The Chevalley formula for the equivariant ordinary \(K\)-theory of \(X\) can be recovered from \Theorem{qkTchev} by interpreting \(\cO^w\) as \(0\) unless \(w\in W^P\). We rely on \cite{LP} for this special case. The formula is derived from \cite[Corollary 7.1 and Corollary 8.2]{LP} using that \((\alpha_{1,2},\dots,\alpha_{1,n})\) is a reduced \(\varepsilon_1\)-chain in the sense of \cite[Definition 5.4]{LP}.

Note that \Corollary{cohomtriv} 
gives us the following ``quantum-equals-classical" type result, which allows us to compute some equivariant \(K\)-theoretic Gromov--Witten invariants using the Chevalley formula for equivariant ordinary \(K\)-theory.

\begin{cor}\label{cor:qclassical} For \(\sigma\in K^T(X)\), \([i,j]\in W^P\), and \(k=1,\ 2\),
\[I_d^T(\sigma,\cO^{[k]},\cO_{[i,j]})=\begin{cases}\chi_X^T(\sigma\cdot[\cO_{\Gamma_d(X_{[i,j]})}])\text{ if }d_k>0\\ \chi_X^T(\sigma\cdot\cO^{[k]}\cdot[\cO_{\Gamma_d(X_{[i,j]})}])\text{ if }d_k=0
\end{cases}.\]
\end{cor}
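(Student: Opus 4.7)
The strategy is to expand the equivariant $K$-theoretic Gromov--Witten invariant as a sheaf Euler characteristic on $M_d$, push it down to $X$ via the cohomological triviality of \Corollary{cohomtriv}, and then identify the resulting curve neighborhood via \Lemma{gamma}. By the $S_3$-symmetry of Gromov--Witten invariants (from the $S_3$-action on $\overline{M}_{0,3}(X,d)$ permuting the marked points and the corresponding evaluation maps), we may rearrange the arguments so that the two Schubert factors sit at the markings used to define $M_d(X_{[i,j]},D^{[k]})$:
\[I_d^T(\sigma,\cO^{[k]},\cO_{[i,j]})=\chi^T_{M_d}\bigl(\ev_1^*[\cO_{X_{[i,j]}}]\cdot\ev_2^*[\cO_{D^{[k]}}]\cdot\ev_3^*\sigma\bigr).\]

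The evaluation maps are flat by $G$-equivariance, $G$-transitivity on $X$, and generic flatness, exactly as used in \Lemma{symmI}. Standard intersection theory on $M_d$---a Cohen--Macaulay/Tor-vanishing check resting on this flatness---yields
\[\ev_1^*[\cO_{X_{[i,j]}}]\cdot\ev_2^*[\cO_{D^{[k]}}]=[\cO_{M_d(X_{[i,j]},D^{[k]})}]\in K^T(M_d).\]
Combining the projection formula for the closed embedding $M_d(X_{[i,j]},D^{[k]})\hookrightarrow M_d$ and for $\ev_3\colon M_d(X_{[i,j]},D^{[k]})\to\Gamma_d(X_{[i,j]},D^{[k]})\subseteq X$ with the cohomological triviality $(\ev_3)_*[\cO_{M_d(X_{[i,j]},D^{[k]})}]=[\cO_{\Gamma_d(X_{[i,j]},D^{[k]})}]$ of \Corollary{cohomtriv}, the invariant reduces to
\[I_d^T(\sigma,\cO^{[k]},\cO_{[i,j]})=\chi^T_X\bigl([\cO_{\Gamma_d(X_{[i,j]},D^{[k]})}]\cdot\sigma\bigr).\]

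To finish, invoke \Lemma{gamma}. When $d_k>0$, $\Gamma_d(X_{[i,j]},D^{[k]})=\Gamma_d(X_{[i,j]})$, producing the first case directly. When $d_k=0$, $\Gamma_d(X_{[i,j]},D^{[k]})=\Gamma_d(X_{[i,j]})\cap D^{[k]}$ is a Richardson variety: by the computation in \Section{curvenbhd}, $\Gamma_d(X_{[i,j]})$ is a $B$-stable Schubert variety (or the whole of $X$), while $D^{[k]}$ is a $B^-$-stable Schubert divisor. Brion's identity for Richardson varieties in equivariant $K$-theory \cite{Brion} then gives $[\cO_{\Gamma_d(X_{[i,j]})\cap D^{[k]}}]=[\cO_{\Gamma_d(X_{[i,j]})}]\cdot\cO^{[k]}$ in $K^T(X)$, which substituted into the previous display produces the second case.

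The principal technical subtlety is the very first step---identifying the product of evaluation pullbacks of Schubert classes with the structure sheaf of the scheme-theoretic Gromov--Witten variety $M_d(X_{[i,j]},D^{[k]})$. This rests on the flatness/$G$-homogeneity package for evaluation maps to homogeneous targets; although standard in this area, it must be invoked carefully. Everything that follows is a direct assembly of \Corollary{cohomtriv}, \Lemma{gamma}, and the Richardson variety identity, together with one application of the projection formula.
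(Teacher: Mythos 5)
Your proposal is correct and follows essentially the same route as the paper: unpack the invariant as a sheaf Euler characteristic on $\overline{M}_{0,3}(X,d)$, identify the product of evaluation pullbacks with $[\cO_{M_d(X_{[i,j]},D^{[k]})}]$ (the paper cites \cite[Section 4.1]{qkgrass} for this), apply the projection formula and \Corollary{cohomtriv} to land on $\chi_X^T(\sigma\cdot[\cO_{\Gamma_d(X_{[i,j]},D^{[k]})}])$, and finish with \Lemma{gamma}. The only added value in your write-up is that you spell out the $d_k=0$ case, where the paper merely says ``the rest follows from \Lemma{gamma}'': you correctly identify $\Gamma_d(X_{[i,j]})\cap D^{[k]}$ as a Richardson variety and invoke Brion's identity $[\cO_{X_w\cap X^v}]=[\cO_{X_w}]\cdot[\cO_{X^v}]$ in $K^T(X)$ to convert the intersection into a product of classes, which is indeed the implicit final step.
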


\begin{proof}
\begin{equation*}
    \begin{split}
        I_d^T(\sigma,\cO^{[k]},\cO_{[i,j]})&=\chi_{M_d}^T(\ev_1^*(\sigma)\cdot \ev_2^*(\cO^{[k]})\cdot \ev_3^*(\cO_{[i,j]}))
        \\&=\chi_{M_d}^T(\ev_1^*(\sigma)\cdot[\cO_{M_d(X_{[i,j]},D^{[k]})}])
        \\&=\chi_X^T(\sigma\cdot{\ev_1}_*([\cO_{M_d(X_{[i,j]},D^{[k]})}]))\label{eqn:cohomtriv}
        \\&=\chi_X^T(\sigma\cdot[\cO_{\Gamma_d(X_{[i,j]},D^{[k]})}])
    \end{split}.
\end{equation*}
For the second equality, see \cite[Section 4.1]{qkgrass} and the references therein. The third equality follows from the projection formula. The last equality follows from \Corollary{cohomtriv}. 

The rest follows from \Lemma{gamma}.
\end{proof}

\begin{notation}
    For \(\sigma_1,\ \sigma_2\in K^T(X)\) and \(w\in\widetilde{W^P}\), we write \(I^T(\sigma_1,\sigma_2;\cO_{w})\) for \(I_{d(w)}^T(\sigma_1,\sigma_2,\cO_{\overline w})\) and \(I^T(\sigma_1,\sigma_2;\cO_{w}^\vee)\) for \(I_{d(w)}^T(\sigma_1,\sigma_2,\cO_{\overline{w}}^\vee)\), with the convention that they are \(0\) when \(d(w)\not\geq 0\). We will only allow the third argument to be outside \(K^T(X)\), and the semicolon is there to remind the reader of it. We do the same in the non-equivariant case.
\end{notation}

When specialized to the non-equivariant case, the following proposition says that for general \(g\) and \(k=1,\ 2\), \begin{equation}\label{eqn:01}\chi(\cO_{M_d(g.X^u,D^{[k]},X_v)})=1\text{ whenever }M_d(g.X^u,D^{[k]},X_v)\neq\varnothing.\end{equation} In other words, when \(M_d(g.X^u,D^{[k]},X_v)\) is non-empty, its arithmetic genus is \(0\). See \Corollary{genus0} for a generalization.
Gromov--Witten varieties associated to Schubert varieties in general position are known to have rational singularities \cite[Corollary 3.1]{Buch2013}. By \cite[Corollary 4.18(a)]{debarre2013higher}, \eqn{01} will follow if \(M_d(g.X^u,D^{[k]},X_v)\) is rationally connected whenever non-empty. We have proved this when \(X^u\) is a point (see the proof of \Theorem{ratcon}).

\begin{prop}\label{prop:GW} 
For \([i,j]\in W^P,\ [k,l]\in\widetilde{W^P}\),
\begin{equation}\label{eqn:I1}I^T(\cO^{[i,j]},\cO^{[1]};\cO_{[k,l]})=\begin{cases}1-[\C_{\varepsilon_k-\varepsilon_1}]&\text{if }i=k\text{ and } j\geq l\\1&\text{if }i<k\text{ and } j\geq l\\0&\text{otherwise }\end{cases};\end{equation}
\begin{equation}\label{eqn:I2}I^T(\cO^{[i,j]},\cO^{[2]};\cO_{[k,l]})=\begin{cases}1-[\C_{\varepsilon_n-\varepsilon_l}]&\text{if }i\leq k\text{ and }j=l \\1&\text{if }i\leq k\text{ and }j>l \\0&\text{otherwise }\end{cases}.\end{equation}
\end{prop}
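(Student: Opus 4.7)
The plan is to reduce each equivariant Gromov--Witten invariant on the left-hand side to a classical equivariant Euler characteristic on $X$ via \Corollary{qclassical}, and then evaluate that Euler characteristic by combining the classical equivariant Chevalley formula --- i.e.\ the $q=0$ specialization of \eqn{qkchev1}, established in \cite{LP} and recalled immediately after \Theorem{qkTchev} --- with the Schubert orthogonality identity $\chi_X^T(\cO^v \cdot \cO_w) = \chi(v \leq w)$ valid for $v,w \in W^P$. This orthogonality holds because $X^v \cap X_w$ is a Richardson variety with rational singularities, non-empty exactly when $v \leq w$ in the Bruhat order, and has trivial equivariant Euler characteristic $1$ when non-empty. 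From the explicit equations in \Section{def}, the relevant Bruhat order on $W^P$ reads $[a,b] \leq [c,d] \iff a \leq c \text{ and } b \geq d$.

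First I would split the argument according to the signs of $d_1 = (k-\overline{k})/n$ and $d_2 = (\overline{l}-l)/n$. In each of the four resulting cases, \Section{curvenbhd} identifies $\Gamma_d(X_{[\overline{k},\overline{l}]})$ explicitly: it is $X_{[\overline{k},\overline{l}]}$ when $d = 0$; it is $X_{[n,\overline{l}]}$ (or $X_{[n-1,n]}$ if $\overline{l}=n$) when $d_1 > 0,\ d_2 = 0$; it is $X_{[\overline{k},1]}$ (or $X_{[1,2]}$ if $\overline{k}=1$) when $d_1 = 0,\ d_2 > 0$; and it is all of $X$ when $d_1, d_2 > 0$. \Corollary{qclassical} then rewrites the invariant as $\chi_X^T(\cO^{[i,j]}\cdot[\cO_{\Gamma_d}])$ when $d_1 > 0$ and as $\chi_X^T(\cO^{[i,j]}\cdot\cO^{[1]}\cdot[\cO_{\Gamma_d}])$ when $d_1 = 0$.

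In the cases $d_1 > 0$, Schubert orthogonality applied directly yields the value $1$ whenever $j \geq \overline{l}$ (an automatic condition when $d_2 > 0$) and $0$ otherwise; since $i \leq n < k$ forces $i < k$, this falls into the second branch of \eqn{I1}. In the cases $d_1 = 0$, I expand $\cO^{[i,j]}\cdot\cO^{[1]}$ via the classical Chevalley formula into two summands $(1-[\C_{\varepsilon_i-\varepsilon_1}])\cO^{[i,j]} + [\C_{\varepsilon_i-\varepsilon_1}]\cO^{[i+1,j]}$ when $i+1 \not\equiv j$, with the convention that a Schubert class indexed outside $W^P$ is interpreted as zero in classical $K$-theory, and into the corresponding three-summand expression when $i+1 \equiv j$. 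Pairing each summand with $[\cO_{\Gamma_d}]$ via orthogonality converts the computation to a signed sum of Bruhat indicator functions, which one checks collapses precisely to the tri-case right-hand side of \eqn{I1}. Formula \eqn{I2} follows by the same procedure with $\cO^{[2]}$ in place of $\cO^{[1]}$; alternatively it can be deduced from \eqn{I1} via the involution $\varphi$ together with \Lemma{symmI}, since $\varphi$ swaps $\cO^{[1]} \leftrightarrow \cO^{[2]}$, $(d_1,d_2) \leftrightarrow (d_2,d_1)$, and $[i,j] \leftrightarrow [n+1-j,n+1-i]$, interchanging the roles of the row and column indices in the statement.

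The main obstacle will be verifying the boundary cases: when $i+1\equiv j$ forces the three-term Chevalley expansion, when $\overline{k}=1$ or $\overline{l}=n$ forces the special form of the curve neighborhood, and when the equivariant coefficient $1-[\C_{\varepsilon_k-\varepsilon_1}]$ degenerates (for instance to $0$ when $k=1$). In each such boundary case one must confirm that the signed sum of indicator functions attached to $[i,j]$, $[i+1,j-1]$, $[i+2,j]$, $[i+2,j-1]$ relative to $[\overline{k},\overline{l}]$ still collapses to the single indicator $\chi([i+1,j]\leq[\overline{k},\overline{l}])$ implicitly used in the generic two-term case. This is a direct but somewhat tedious enumeration that I would handle by observing that the three auxiliary Schubert indices all lie on a short interval in the Bruhat order, so the alternating sum telescopes to the claimed value.
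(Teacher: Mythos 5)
Your approach mirrors the paper's exactly: reduce to a classical equivariant Euler characteristic via \Corollary{qclassical}, identify the curve neighborhood from \Section{curvenbhd}, expand with the Lenart--Postnikov Chevalley formula when $d_1 = 0$, collapse using the Richardson pairing $\chi_X^T(\cO^{[x,y]}\cdot\cO_{[a,b]})=\chi(x\leq a \text{ and } y\geq b)$, and derive \eqn{I2} from \eqn{I1} via the involution together with \Lemma{symmI} and \Lemma{diags}. One small slip worth noting: when $d_1>0$ and $d_2>0$ the curve neighborhood is all of $X$ and the invariant is $1$ with no condition; the automatic inequality is $j\geq l$ (since $l\leq 0$), not $j\geq \overline{l}$ as you wrote, though this does not change the outcome.
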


\begin{proof} 
Assume \(i<k\) and \(j\geq l\). 

If \(k\leq n\), then \[\overline k=k>i,\] and by \Corollary{qclassical} \begin{equation}\label{eqn:I}I^T(\cO^{[i,j]},\cO^{[1]};\cO_{[k,l]})=\chi_X^T(\cO^{[i,j]}\cdot\cO^{[1]}\cdot[\cO_{\Gamma_{(0,d_2)}(X_{[k,\overline{l}]})}]),\end{equation} where \(d_2=(\overline{l}-l)/{n}\). Let \[X_{[a,b]}={\Gamma_{(0,d_2)}(X_{[k,\overline{l}]})}.\] By \Section{curvenbhd}, \begin{equation}\label{eqn:ab}n\geq a=k>i\text{ and }b\leq j.\end{equation} By the Chevalley formula for the equivariant ordinary \(K\)-theory of \(X\), \[\cO^{[i,j]}\cdot\cO^{[1]}=(1-[\C_{\varepsilon_i-\varepsilon_1}])\cO^{[i,j]}+[\C_{\varepsilon_i-\varepsilon_1}]S,\] where \[S=\begin{cases}\cO^{[i+1,j]}&\text{if }i+1\neq j\\\cO^{[i+1,j-1]}&\text{if }i+1=j=n\\\cO^{[i+1,j-1]}+\cO^{[i+2,j]}-\cO^{[i+2,j-1]}&\text{if }i+1=j<n\end{cases}.\] The conditions \eqn{ab} implies \[a=n>b\text{ when }i+1=j=n\] and \[\text{either }a>i+1\text{ or }b<j\text{ when }i+1=j<n.\]
Since for \([x,y],\ [a,b]\in W^P\), \begin{equation}\label{eqn:chi}
    \chi_X^T(\cO^{[x,y]}\cdot\cO_{[a,b]})=
    \begin{cases}
        1&\text{if }a\geq x\text{ and }b\leq y\\0&\text{if }a<x\text{ or }b>y
    \end{cases},
\end{equation} we have \[\chi_X^T(S\cdot\cO_{[a,b]})=1\] and the right hand side of \eqn{I} equals \((1-[\C_{\varepsilon_i-\varepsilon_1}])+[\C_{\varepsilon_i-\varepsilon_1}]=1\).

If \(k>n\), then by \Corollary{qclassical} \begin{equation}\label{eqn:I>}I^T(\cO^{[i,j]},\cO^{[1]};\cO_{[k,l]})=\chi_X^T(\cO^{[i,j]}\cdot[\cO_{\Gamma_{d}(X_{[\overline k,\overline l]})}]),\end{equation} where \[d=(\frac{k-\overline{k}}{n},\frac{\overline{l}-l}{n}).\] Let \[X_{[a,b]}=\Gamma_d(X_{[\overline{k},\overline{l}]}).\] If \(l<1\), then \(X_{[a,b]}=X\) and the right hand side of \eqn{I>} equals \(1\). Otherwise
\[\overline l=l\leq j.\] By \Section{curvenbhd}, \[a\geq i\text{ and }b=l\leq j.\] By \eqn{chi}, the right-hand side of \eqn{I>} equals \(1\).

The other two cases of \eqn{I1} are similar, and the proof is omitted. \eqn{I2} follows from \eqn{I1} by applying \Lemma{symmI} and \Lemma{diags}.
\end{proof}

\begin{defn}\label{defn:circPsi}
    Following \cite{qkchev}, for \(\sigma_1,\ \sigma_2\in K^T(X)\), we define the formal sum \[\sigma_1\odot\sigma_2=\sum_{w\in W^P,\ d\geq0}I^T_d(\sigma_1,\sigma_2,\cO_w^\vee)q^d\cO^w=\sum_{w\in\widetilde{W^P}}I^T(\sigma_1,\sigma_2;\cO_w^\vee)\cO^w,\] and we define the automorphism \(\Psi: QK_T(X)_q\to QK_T(X)_q\) of \(K^T(\pt)(\!(q_1,q_2)\!)\)-modules by \[\Psi(\cO^{u})=q^{d(u)}\Psi(\cO^{\overline{u}})=\sum_{d\geq0}q^{d(u)+d}[\cO_{\Gamma_d(X^{\overline u})}]\text{ for }u\in \widetilde{W^P}.\] We will write \(\Gamma_d(X^u)\) for \(\Gamma_{d-d(u)}(X^{\overline{u}})\) when \(d\geq d(u)\) and set \(\Gamma_d(X^u)=\varnothing\) when \(d\not\geq d(u)\). Then we have for \(u\in \widetilde{W^P}\), \begin{equation}\label{eqn:Psi}\Psi(\cO^{u})=\sum_{v\in\widetilde{W^P}:\ \Gamma_{d(v)}(X^{u})=X^{\overline v}}\cO^{v}.\end{equation}
\end{defn}

The following was proved in \cite[Proposition 2.3]{qkchev}. We include a proof for completeness.

\begin{prop}[Buch--Chaput--Mihalcea--Perrin] \label{prop:circ}
    For \(\sigma_1,\ \sigma_2\in K^T(X)\), their product \(\sigma_1\star\sigma_2\) in \(QK_T(X)\) satisfies \(\sigma_1\odot\sigma_2=\Psi(\sigma_1\star\sigma_2).\)
\end{prop}
\begin{proof}
    By linearity, it suffices to prove that for \(u,v\in W^P\),
    \begin{equation}\label{eqn:circ}
        \cO^u\odot\cO^v=\Psi(\cO^u\star\cO^v).
    \end{equation}
    Write \(\cO^u\star\cO^v=\sum_{z\in\widetilde{W^P}}N_{u,v}^z\cO^z\), then 
    \[
        \Psi(\cO^u\star\cO^v)=\sum_{z\in\widetilde{W^P}}N_{u,v}^z\sum_{w\in\widetilde{W^P}:\  \Gamma_{d(w)}(X^z)=X^{\overline{w}}}\cO^w.
    \]
    It suffices to show that \(\cO^w\) has the same coefficients on both sides of \eqn{circ}, which means
    \begin{equation}\label{eqn:I^T_vee}
        I^T(\cO^u,\cO^v;\cO_w^\vee)=\sum_{z\in\widetilde{W^P}:\ \Gamma_{d(w)}(X^z)=X^{\overline{w}}}N_{u,v}^z.
    \end{equation}
    By definition,  
    \[
        (\!(\cO^u\star\cO^v,\cO_{\overline{w}}^\vee)\!)=\sum_{d}q^d I_d^T(\cO^u,\cO^v,\cO_{\overline{w}}^\vee),
    \] 
    where \((\!(\sigma_1,\sigma_2)\!)=\sum_d q^d I_d^T(\sigma_1,\sigma_2)\) for \(\sigma_1,\sigma_2\in K^T(X)\). Therefore, 
    \begin{equation*}
        \sum_{z\in\widetilde{W^P}}N_{u,v}^z\sum_e q^{e+d(z)}I_e^T(\cO^{\overline{z}},\cO_{\overline{w}}^\vee)=\sum_d q^d I_d^T(\cO^u,\cO^v,\cO_{\overline{w}}^\vee).
    \end{equation*}
    Taking out the coefficients of \(q^{d(w)}\), we have 
    \begin{equation*}
        \sum_{z\in\widetilde{W^P}}N_{u,v}^z\sum_{e:\ e+d(z)=d(w)}I_e^T(\cO^{\overline{z}},\cO_{\overline{w}}^\vee)=I^T(\cO^u,\cO^v;\cO_w^\vee).
    \end{equation*}
    By \cite{Buch2013}*{Proposition 3.2}, 
    \[
        I_e^T(\cO^{\overline{z}},\cO_{\overline{w}}^\vee)=\chi_X^T([\cO_{\Gamma_e(X^{\overline{z}})}],\cO_{\overline w}^\vee)=
        \begin{cases}
            1 & \Gamma_e(X^{\overline z})=X^{\overline w}\\
            0 & \text{otherwise}
        \end{cases}.
    \]
    Equation \eqn{I^T_vee} follows.
\end{proof}

Since \(\sigma_1\star\sigma_2=\Psi^{-1}(\sigma_1\odot\sigma_2)\) for \(\sigma_1,\ \sigma_2\in K^T(X)\) by \Proposition{circ}, the verification of \Theorem{qkTchev} requires computing \(I^T(\cO^u,\cO^{[i]};\cO_w^\vee)\) for all \(u\in W^P, w\in\widetilde{W^P}\); \(i=1,\ 2\). In \Proposition{GW}, we have computed \(I^T(\cO^u,\cO^{[i]};\cO_w)\). We now express each \(\cO_w^\vee\) as a linear combination of the classes \(\cO_w\).

Recall from \Section{def} that we identify \(W^P\) with the set of minimal coset representatives of \(W/W_P\). 
\begin{defn}
    For \(v\in W^P\), we define \(I(v)\) to be the set of \(u\in W^P\) such that every element in \(W\) between \(u\) and \(v\) is contained in \(W^P\), i.e., \[I(v)\coloneqq\{u\in W^P:\ u\leq v\text{ and }\nexists\ w\in W\setminus W^P\text{ s.t. }u<w<v\}.\]
\end{defn}

The next lemma is a special case of \cite[Theorem 1.2 and Section 3]{Deodhar}.

\begin{lemma}
Let \(f\) be any function on \(W^P\) with values in an abelian group and define function \(g\) on \(W^P\) by \[g(u)=\sum_{v\in W^P:\ v\leq u}f(v).\] Then for all \(v\in W^P\), \[f(v)=\sum_{u\in I(v)}(-1)^{\ell(u)+\ell(v)}g(u).\]
\end{lemma}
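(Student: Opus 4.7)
The plan is to recognize the statement as an instance of Möbius inversion on the poset \(W^P\) endowed with the Bruhat order induced from \(W\). The defining relation \(g(u)=\sum_{v\leq u}f(v)\) is precisely the hypothesis of the Möbius inversion formula, so standard poset theory (e.g. Stanley, \emph{Enumerative Combinatorics} vol.~1, Ch.~3) immediately gives
\[
f(v)=\sum_{u\in W^P:\ u\leq v}\mu_{W^P}(u,v)\,g(u),
\]
where \(\mu_{W^P}\) denotes the Möbius function of the poset \(W^P\). Thus the entire content of the lemma is the explicit identification of this Möbius function.

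The next step is to compute \(\mu_{W^P}(u,v)\). This is exactly what Deodhar's theorem \cite[Theorem 1.2]{Deodhar} provides: in any parabolic quotient of a Coxeter group,
\[
\mu_{W^P}(u,v)=\begin{cases}(-1)^{\ell(u)+\ell(v)}&\text{if every }w\in W\text{ with }u\leq w\leq v\text{ lies in }W^P,\\ 0&\text{otherwise.}\end{cases}
\]
The hypothesis ``every \(w\in[u,v]_W\) lies in \(W^P\)'' is exactly the definition of \(u\in I(v)\) given just above the lemma. Substituting this formula into the Möbius inversion displayed above yields the claimed expression
\[
f(v)=\sum_{u\in I(v)}(-1)^{\ell(u)+\ell(v)}\,g(u).
\]

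There is no real obstacle here: all the substantive work is in Deodhar's paper, and the author already flags the lemma as a special case of \cite[Theorem 1.2 and Section 3]{Deodhar}. The only thing worth double-checking is that the conventions for Bruhat order, length, and minimal coset representatives used in the present paper match those of Deodhar, which they do (this is the standard convention in Coxeter-theoretic Schubert calculus). Hence the proof is essentially a one-line citation after setting up the Möbius inversion framework.
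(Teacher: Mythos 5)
Your proposal is correct and matches the paper's approach: the paper itself states the lemma is a special case of Deodhar's result and gives no further argument, and your write-up just spells out the two ingredients implicit in that citation (M\"obius inversion on the poset \(W^P\), plus Deodhar's computation of \(\mu_{W^P}\) in terms of full Bruhat intervals lying in \(W^P\)). Nothing is missing or different in substance.
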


\begin{lemma}
    For \(u\in W^P\), \[\cO_{u}=\sum_{v\in W^P:\ v\leq u}\cO_{v}^\vee.\]
\end{lemma}

\begin{proof}
    Since \(\cO_v^\vee\), \(v\in W^P\) form a basis, we can formally write \(\cO_u=\sum_{v\in W^P}c_v\cO_v^\vee\) with \(c_v\in K^T(\pt)\) and hence \(\chi_X^T(\cO^w\cdot\cO_u)=\sum_{v\in W^P}c_v\chi_X^T(\cO^w\cdot\cO_v^\vee)\). Recall that \(\chi^T_X(\cO^w\cdot\cO_v^\vee)=\delta_{wv}\). On the other hand,
    \[
    \chi_X^T(\cO^w\cdot\cO_u)=\chi_X^T([\cO_{X^w\cap X_u}])=
    \begin{cases}
        1 & w\leq u\\
        0 & w\not\leq u
    \end{cases},
\] 
using that the Richardson variety \(X^w\cap X_u\) is non-empty if and only if \(w\leq u\), and in this case it is rational with rational singularities. The result follows.
\end{proof}

\begin{cor}\label{cor:expand} For \(v\in W^P\),
\[\cO_{v}^\vee=\sum_{u\in I(v)}(-1)^{\ell(u)+\ell(v)}\cO_{u}.\]
\end{cor}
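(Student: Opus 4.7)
The plan is to apply the preceding M\"obius-inversion lemma directly. Set $f(v) = \cO_v^\vee$ and $g(u) = \cO_u$, viewed as functions $W^P \to K^T(X)$, which is an abelian group as required by the hypothesis of the lemma. The sentence just before the statement of the corollary records the expansion
\[\cO_u = \sum_{v \in W^P,\ v \leq u} \cO_v^\vee,\]
which comes from the Bruhat stratification $X_u = \bigsqcup_{v \leq u,\ v \in W^P} BvP/P$ together with the identification $\cO_v^\vee = [\mathcal{I}_{\partial X_v}]$. Under our choice of $f$ and $g$ this is precisely the hypothesis $g(u) = \sum_{v \in W^P,\ v \leq u} f(v)$ of the lemma.

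Invoking the lemma then yields
\[\cO_v^\vee \;=\; f(v) \;=\; \sum_{u \in I(v)} (-1)^{\ell(u) + \ell(v)}\, g(u) \;=\; \sum_{u \in I(v)} (-1)^{\ell(u) + \ell(v)}\, \cO_u,\]
which is the claim. All nontrivial content is packaged inside the lemma itself --- the Deodhar-type inversion restricted to intervals of $W$ that happen to lie in $W^P$ --- so there is no further step to carry out and no genuine obstacle. The corollary is a one-line specialization of the lemma once one observes that, by the defining pairing $\chi_X^T(\cO^u \cdot \cO_v^\vee) = \delta_{uv}$, the Schubert and dual Schubert classes are related by exactly the triangular change of basis recorded above.
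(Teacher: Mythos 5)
Your proof is correct and follows exactly the route the paper intends: the paper derives the corollary as an immediate specialization of the preceding Deodhar-type inversion lemma, using the relation $\cO_u = \sum_{v \leq u} \cO_v^\vee$ that it records just before the corollary. Nothing to add.
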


Now we give explicit computation of \(I(v)\). Some examples are illustrated in \Figure{I}.

\begin{lemma}\label{lemma:necc}
    Let \(u,\ v\in W^P\). Assume \(u\in I(v)\), then \(u'\in I(v)\) for all \(u'\in W^P\) such that \(u\leq u'\leq v\).
\end{lemma}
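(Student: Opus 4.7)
The statement is essentially a direct consequence of the transitivity of the Bruhat order, so the plan is simply to unpack the definition of $I(v)$ and verify both defining conditions for $u'$. The first condition, $u' \leq v$, is given as a hypothesis. The work is therefore in checking the second condition: no $w \in W \setminus W^P$ satisfies $u' < w < v$.

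The argument I would give is by contradiction. Suppose such a $w$ exists. Then I have the chain $u \leq u' < w < v$. Transitivity of the Bruhat order immediately yields $u \leq w < v$, and to upgrade $u \leq w$ to a strict inequality, note that $u \leq u' < w$ rules out $u = w$ (else we would get $w \leq u'$, contradicting $u' < w$). Hence $u < w < v$ with $w \in W \setminus W^P$, contradicting $u \in I(v)$.

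The main (and only) ``obstacle'' is purely bookkeeping: making sure the strict inequality $u < w$ is extracted properly from $u \leq u' < w$. No property of $W^P$ beyond its being a subset of $W$ is used; the lemma truly just says that the defining condition on $I(v)$ is preserved when one replaces $u$ by any $u' \in W^P$ lying Bruhat-between $u$ and $v$. Conceptually, this will allow one to characterize $I(v)$ as the set of maximal chains downward in $W^P \cap [e,v]$ that do not pass through $W \setminus W^P$, which is exactly the computational input needed for the explicit description of $I(v)$ to follow.
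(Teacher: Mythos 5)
Your proof is correct and matches the paper's (unstated) intent; the paper simply dismisses this as ``clear,'' and your argument is exactly the straightforward unpacking of the definition of $I(v)$ together with transitivity of the Bruhat order.
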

\begin{proof}
    This is clear.
\end{proof}

\begin{lemma}\label{lemma:I} 
    For \([a,b]\in W^P\), \[I([a,b])=
    \begin{cases}
        W^P\cap\{[a,b],\ [a-1,b],\ [a,b+1],\ [a-1,b+1]\}&\text{if }a-b\neq1,\ 2\\
        W^P\cap\{[a,b],\ [b-1,b],\ [b,a],\ [a,a+1],\ [b-1,a],\ [b,a+1]\}&\text{if }a-b=1\\
        \{[k,l]\in W^P:\ b\leq k, l\leq a\}&\text{if }a-b=2
    \end{cases}.\]
\end{lemma}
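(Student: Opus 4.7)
The plan is to prove the formula by case analysis on $a-b$, combining two inclusions. Throughout I identify $W^P$ with the permutations $w \in S_n$ satisfying $w(2) < \cdots < w(n-1)$, so that each $[i,j] \in W^P$ is represented by $w_{[i,j]}$ with $w(1)=i$, $w(n)=j$ and sorted middle; the Bruhat order on $S_n$ is tested via the sorted-prefix (tableau) criterion.

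For each case, I would first identify the Bruhat-minimal elements of the claimed set $C$: the unique minimum $[a-1, b+1]$ when $a-b \notin \{1,2\}$; the two minima $[b-1,a]$ and $[b,a+1]$ when $a-b=1$; and the unique minimum $[b,a]$ when $a-b=2$. By \Lemma{necc}, to show $C \subseteq I([a,b])$ it suffices to show that the Bruhat interval in $W$ from each minimum up to $[a,b]$ is contained in $W^P$. I would then compute each such interval explicitly: the length gap is $2$ in the first two cases and $3$ in the last. The natural saturated chains in $W^P$ from the minimum to $[a,b]$ are built from commuting ``left'' and ``right'' transpositions --- one swaps position $1$ with the middle position holding $a$ (turning $a-1$ into $a$ at position $1$), another swaps position $n$ with the middle position holding $b$, and in the last two cases there is an additional transposition exchanging the values at positions $1$ and $n$. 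The key point is that these are the only elements in the Bruhat interval: any other candidate arises from a ``middle swap'' of two consecutive middle entries of the minimum, and one verifies via a sorted-prefix comparison (or equivalently by expressing $w'^{-1} \cdot w_{[a,b]}$ as a product of three transpositions that is never a single transposition) that such a permutation fails to lie below $[a,b]$.

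For the reverse inclusion $I([a,b]) \subseteq C$, I would construct for each $u \in W^P \cap \{\leq [a,b]\}$ outside $C$ an explicit obstruction $w \in W \setminus W^P$ with $u < w < [a,b]$. The candidates are the non-$W^P$ covers of $[a,b]$ in $S_n$: each middle value $c$ of $w_{[a,b]}$ with $c<a$ and $c \neq a-1$ (or $c \neq a-2$ when $a-b=1$) yields, upon swapping with position $1$, a cover of $[a,b]$ outside $W^P$, and symmetrically for position $n$ with middle values exceeding $b$. A direct sorted-prefix check shows that every $u$ outside $C$ lies below at least one such cover.

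The main obstacle is expected to be the case $a-b=2$: the length gap $\ell([a,b]) - \ell([b,a]) = 3$ is the largest, and the Bruhat interval from $[b,a]$ up to $[a,b]$ contains six elements distributed over four length levels, making the verification that every intermediate lies in $W^P$ the most delicate step of the proof. The other two cases, by contrast, each involve a length gap of only $2$ and a four-element Bruhat interval that is straightforward to enumerate from the cover structure.
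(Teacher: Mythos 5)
Your forward inclusion $C \subseteq I([a,b])$ --- identifying the Bruhat-minimal elements of $C$ and applying \Lemma{necc} after checking that the Bruhat interval in $W$ from each minimum up to $[a,b]$ lies inside $W^P$ --- is sound and close in spirit to the paper's own argument. The genuine gap is in the reverse inclusion $I([a,b]) \subseteq C$: you claim that every $u\in W^P$ with $u\leq [a,b]$ and $u\notin C$ lies below a non-$W^P$ \emph{cover} of $[a,b]$, but this fails when $a-b=1$ for $u=[b-1,a+1]$ (valid whenever $2\leq b\leq n-2$, so $n\geq 4$). For that $u$ the obstruction $w\in W\setminus W^P$ with $u<w<[a,b]$ is a cover of $u$, not of $[a,b]$: it is $w_u$ with the two adjacent middle entries $b$ and $b+1$ swapped, sitting \emph{two} levels below $[a,b]$. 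Concretely, take $n=5$ and $[a,b]=[3,2]$ with $w_{[3,2]}=31452$, and $u=[1,4]$ with $w_u=12354$. The unique non-$W^P$ element covered by $31452$ is $31425$, and $12354\not\leq 31425$ (the sorted $4$-prefix $\{1,2,3,5\}$ is not entrywise $\leq\{1,2,3,4\}$); yet the obstruction $13254\notin W^P$ does satisfy $12354<13254<31452$, so $[1,4]\notin I([3,2])$ for a reason your candidate list never sees.

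This exceptional $u=[b-1,a+1]$ is precisely the case the paper singles out (there the interval $\bm{[}u,[a,b]\bm{]}$ has $8$ elements, $7$ of which lie in $W^P$), and it --- rather than the $a-b=2$ forward inclusion you flag as the main obstacle --- is the genuinely delicate step; the length-$3$ interval from $[b,a]$ up to $[a,b]$ when $a-b=2$ has exactly $6$ elements, all in $W^P$, and causes no trouble. To repair your argument you would either need a larger pool of candidate obstructions (for instance non-$W^P$ covers of $u$, not just of $[a,b]$), or follow the paper's route of applying \Lemma{necc} with the intermediate witnesses $[a-3,b]$ and $[a,b+3]$ and then handling $u=[b-1,a+1]$ by a direct count.
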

\begin{proof}
    By reasoning as follows, the conclusion can be read from the poset \(W^P\). When \(n=5\), the poset structure of \(W^P\) is illustrated in \Figure{W^P}. 

    Let \(v=[a,b]\). Let \(u\in W^P\) be such that \(u\leq v\). 
    
    If \(l(u)\geq l(v)-1\), then \(u\in I(v)\).

    If \(l(u)=l(v)-2\), then the Bruhat interval \(\bm [ u,v\bm ]\) in \(W\) contains \(4\) elements. \(u\in I(v)\) if and only if \(\bm [ u,v\bm ]\cap W^P\) contains \(4\) elements.

    Now assume \(l(u)\leq l(v)-3\). 
    
    If \(a-b\neq 1,\ 2\), then neither \([a-2,b]\) nor \([a,b+2]\) is in \(I(v)\), and that at least one of them is in \(W^P\cap\bm [ u,v\bm]\). By \Lemma{necc}, \(u\not\in I(v)\). 
    
    If \(a-b=1\) or \(2\), then neither \([a-3,b]\) nor \([a,b+3]\) is in \(I(v)\), and that at least one of them is in \(W^P\cap\bm [ u,v\bm]\) except when we are in either of the following cases: 
    \begin{enumerate}     
        \item \(a-b=1,\ u=[b-1,a+1].\)
        \item \(a-b=2,\ u=[b,a];\)
    \end{enumerate}
    By \Lemma{necc}, \(u\not\in I(v)\) unless we are in one of those two cases. The Bruhat interval \(\bm [ u,v\bm ]\) in \(W\) contains \(8\) elements in case (1) and \(6\) elements in case (2). \(W^P\cap\bm [ u,v\bm]\) contains \(7\) elements in case (1) and \(6\) elements in case (2). We conclude that \(u\not\in I(v)\) in case (1) and \(u\in I(v)\)  in case (2).
\end{proof}

\begin{center}
    \begin{figure}[ht]
        \begin{tikzpicture}
            \node (51) at (0,0) {\([5,1]\)};
            \node [below left of=51] (41) {\([4,1]\)};
            \node [below right of=51] (52) {\([5,2]\)};
            \node [below left of=41] (31)  {\([3,1]\)};
            \node [below right of=41] (42) {\([4,2]\)};
            \node [below right of=52] (53) {\([5,3]\)};
            \node [below left of=31] (21) {\([2,1]\)};
            \node [below right of=31] (32) {\([3,2]\)};
            \node [below right of=42] (43) {\([4,3]\)};
            \node [below right of=53] (54) {\([5,4]\)};
            \node [below of=21] (12) {\([1,2]\)};
            \node [below of=32] (23) {\([2,3]\)};
            \node [below of=43] (34) {\([3,4]\)};
            \node [below of=54] (45) {\([4,5]\)};
            \node [below right of=12] (13) {\([1,3]\)};
            \node [below right of=23] (24) {\([2,4]\)};
            \node [below left of=45] (35) {\([3,5]\)};
            \node [below right of=13] (14) {\([1,4]\)};
            \node [below left of=35] (25) {\([2,5]\)};
            \node [below left of=25] (15) {\([1,5]\)};
            \draw [shorten <=-3pt, shorten >=-3pt] (51) -- (41);
            \draw [shorten <=-3pt, shorten >=-3pt] (51) -- (52);
            \draw [shorten <=-3pt, shorten >=-3pt] (41) -- (31);
            \draw [shorten <=-3pt, shorten >=-3pt] (41) -- (42);
            \draw [shorten <=-3pt, shorten >=-3pt] (52) -- (42);
            \draw [shorten <=-3pt, shorten >=-3pt] (52) -- (53);
            \draw [shorten <=-3pt, shorten >=-3pt] (31) -- (21);
            \draw [shorten <=-3pt, shorten >=-3pt] (31) -- (32);
            \draw [shorten <=-3pt, shorten >=-3pt] (42) -- (32);
            \draw [shorten <=-3pt, shorten >=-3pt] (42) -- (43);
            \draw [shorten <=-3pt, shorten >=-3pt] (53) -- (43);
            \draw [shorten <=-3pt, shorten >=-3pt] (53) -- (54);
            \draw [shorten <=-3pt, shorten >=-3pt] (21) -- (12);
            \draw [shorten <=-3pt, shorten >=-3pt] (21) -- (23);
            \draw [shorten <=-3pt, shorten >=-3pt] (32) -- (12);
            \draw [shorten <=-3pt, shorten >=-3pt] (32) -- (23);
            \draw [shorten <=-3pt, shorten >=-3pt] (43) -- (23);
            \draw [shorten <=-3pt, shorten >=-3pt] (43) -- (45);
            \draw [shorten <=-3pt, shorten >=-3pt] (32) -- (34);
            \draw [shorten <=-3pt, shorten >=-3pt] (54) -- (34);
            \draw [shorten <=-3pt, shorten >=-3pt] (54) -- (45);
            \draw [shorten <=-3pt, shorten >=-3pt] (43) -- (34);
            \draw [shorten <=-3pt, shorten >=-3pt] (12) -- (13);
            \draw [shorten <=-3pt, shorten >=-3pt] (23) -- (13);
            \draw [shorten <=-3pt, shorten >=-3pt] (23) -- (24);
            \draw [shorten <=-3pt, shorten >=-3pt] (34) -- (24);
            \draw [shorten <=-3pt, shorten >=-3pt] (34) -- (35);
            \draw [shorten <=-3pt, shorten >=-3pt] (45) -- (35);
            \draw [shorten <=-3pt, shorten >=-3pt] (13) -- (14);
            \draw [shorten <=-3pt, shorten >=-3pt] (24) -- (14);
            \draw [shorten <=-3pt, shorten >=-3pt] (24) -- (25);
            \draw [shorten <=-3pt, shorten >=-3pt] (35) -- (25);
            \draw [shorten <=-3pt, shorten >=-3pt] (14) -- (15);
            \draw [shorten <=-3pt, shorten >=-3pt] (25) -- (15);
        \end{tikzpicture}
        \caption{Hasse diagram of $(W^P, \leq)$ when \(n=5\).}
        \label{fig:W^P}
    \end{figure}

    \begin{figure}[ht]
        \begin{subfigure}{0.25\textwidth}
            \centering
            \begin{tikzpicture}
                \node (41) at (0,0) {\([4,1]\)};
                \node [below left of=41] (31)  {\([3,1]\)};
                \node [below right of=41] (42) {\([4,2]\)};
                \node [below right of=31] (32) {\([3,2]\)};
                \draw [shorten <=-3pt, shorten >=-3pt] (41) -- (31);
                \draw [shorten <=-3pt, shorten >=-3pt] (41) -- (42);
                \draw [shorten <=-3pt, shorten >=-3pt] (31) -- (32);
                \draw [shorten <=-3pt, shorten >=-3pt] (42) -- (32);
            \end{tikzpicture}
            \caption{\(I([4,1])\).}
        \end{subfigure}
        \begin{subfigure}{0.25\textwidth}
            \centering
            \begin{tikzpicture}
                \node (32) at (0,0) {\([3,2]\)};
                \node [below of=32] (23) {\([2,3]\)};
                \node [left of=23] (12) {\([1,2]\)};
                \node [right of=23] (34) {\([3,4]\)};
                \node [below of=12] (13) {\([1,3]\)};
                \node [below of=34] (24) {\([2,4]\)};
                \draw [shorten <=-3pt, shorten >=-3pt] (32) -- (12);
                \draw [shorten <=-3pt, shorten >=-3pt] (32) -- (23);
                \draw [shorten <=-3pt, shorten >=-3pt] (32) -- (34);
                \draw [shorten <=-3pt, shorten >=-3pt] (12) -- (13);
                \draw [shorten <=-3pt, shorten >=-3pt] (23) -- (13);
                \draw [shorten <=-3pt, shorten >=-3pt] (23) -- (24);
                \draw [shorten <=-3pt, shorten >=-3pt] (34) -- (24);
            \end{tikzpicture}
            \caption{\(I([3,2])\).}
        \end{subfigure}
        \begin{subfigure}{0.25\textwidth}
            \centering
            \begin{tikzpicture}
                \node (31) at (0,0) {\([3,1]\)};
                \node [below left of=31] (21) {\([2,1]\)};
                \node [below right of=31] (32) {\([3,2]\)};
                \node [below of=21] (12) {\([1,2]\)};
                \node [below of=32] (23) {\([2,3]\)};
                \node [below right of=12] (13) {\([1,3]\)};
                \draw [shorten <=-3pt, shorten >=-3pt] (31) -- (21);
                \draw [shorten <=-3pt, shorten >=-3pt] (31) -- (32);
                \draw [shorten <=-3pt, shorten >=-3pt] (21) -- (12);
                \draw [shorten <=-3pt, shorten >=-3pt] (21) -- (23);
                \draw [shorten <=-3pt, shorten >=-3pt] (32) -- (12);
                \draw [shorten <=-3pt, shorten >=-3pt] (32) -- (23);
                \draw [shorten <=-3pt, shorten >=-3pt] (12) -- (13);
                \draw [shorten <=-3pt, shorten >=-3pt] (23) -- (13);
            \end{tikzpicture}
            \caption{\(I([3,1])\).}
        \end{subfigure}
        \begin{subfigure}{0.2\textwidth}
            \centering
            \begin{tikzpicture}
                \node (12) at (0,0) {\([1,2]\)};
                \node [below of=12] (13) {\([1,3]\)};
                \draw [shorten <=-3pt, shorten >=-3pt] (12) -- (13);
            \end{tikzpicture}
            \caption{\(I([1,2])\).}
         \end{subfigure}
         \caption{Hasse diagrams of $(I(v), \leq)$ for various values of $v$.}
         \label{fig:I}
    \end{figure}   
\end{center}

\begin{defn}\label{defn:length}
    For \(u=[x,y]\in \widetilde{W^P}\), we now let 
    \begin{align*}
        \ell([x,y])&=\text{codim}X^{\overline{u}}+\int_{d(u)}c_1(T_X)\\&=\ell(\overline{u})+\frac{n-1}{n}(x-\overline x-y+\overline y)\\&=(n-1)(1+\frac{x-y}{n})+\frac{\overline x-\overline y}{n}-\chi(\overline x>\overline y),
    \end{align*}
    where \(T_X\) is the tangent bundle of \(X\), and \(\int_{d(u)}c_1(T_X)\) is the degree of \(q^{d(u)}\) in the (graded) quantum cohomology ring of \(X\).

    For \(u,\ v\in \widetilde{W^P}\), we write \(u\in I(v)\) if \[d(u)=d(v)\text{ and }\overline u\in I(\overline v).\] 
\end{defn}

\begin{proof}[Proof of \Theorem{qkTchev}]
    Without loss of generality, we may assume \([i,j]\in W^P\). We will prove 
    \begin{equation}\label{eqn:odot1}\cO^{[i,j]}\odot\cO^{[1]}=\begin{cases}(1-[\C_{\varepsilon_i-\varepsilon_1}])\Psi(\cO^{[i,j]})+[\C_{\varepsilon_i-\varepsilon_1}]\Psi(\cO^{[i+1,j]})&\text{if }i+1\not\equiv j\\(1-[\C_{\varepsilon_i-\varepsilon_1}])\Psi(\cO^{[i,j]})+[\C_{\varepsilon_i-\varepsilon_1}]\Psi(\cO^{[i+1,j-1]}+\cO^{[i+2,j]}-\cO^{[i+2,j-1]})&\text{if }i+1\equiv j\end{cases}.\end{equation}
    By \cite[Proposition 2.3]{qkchev}, this is equivalent to \eqn{qkchev1}. We will prove \eqn{odot1} by comparing the coefficient of \(\cO^{w}\) for \(w=[a,b]\in\widetilde{W^P}\) on both sides. Let \(d=d(w)=(d_1,d_2)\).

    In order to compute the coefficient of \(\cO^w\) on the right hand side of \eqn{odot1}, we list curve neighborhoods in \Table{<>} through \Table{n,1} (see \Section{curvenbhd}).

    \begin{table}[h!]
        \caption{Assume \(i<n\) and \(i+1\neq j\).}
        \label{tab:<>}
        \begin{center}
            \begin{tabular}{|c|c|c|}
                \hline
                & \(\Gamma_d(X^{[i,j]})\) & \(\Gamma_d(X^{[i+1,j]})\)\\ \hline
                \(d=0\) & \(X^{[i,j]}\) & \(X^{[i+1,j]}\) \\\hline
                \(d_1>0\) and \(d_2=0\) & \(X^{[1,j]}\) or \(X^{[2,1]}\) & \(X^{[1,j]}\) or \(X^{[2,1]}\) \\\hline
                \(d_1=0\) and \(d_2>0\) & \(X^{[i,n]}\) & \(X^{[i+1,n]}\) or \(X^{[n,n-1]}\)\\\hline
                \(d_1,\ d_2>0\) & \(X^{[1,n]}\) & \(X^{[1,n]}\)\\\hline
            \end{tabular}
        \end{center}
    \end{table}

    \begin{table}[h!]
        \caption{Assume \(i=n\) and \(j>1\).}
        \label{tab:=>}
        \begin{center}
            \begin{tabular}{|c|c|c|}
                \hline
                & \(\Gamma_d(X^{[n,j]})\) & \(\Gamma_d(X^{[n+1,j]})\)\\ \hline
                \(d=0\) & \(X^{[n,j]}\) & \(\varnothing\) \\\hline
                \(d_1>0\) and \(d_2=0\) & \(X^{[1,j]}\) & \(X^{[1,j]}\)\\\hline
                \(d_1=0\) and \(d_2>0\) & \(X^{[n,n-1]}\) & \(\varnothing\)\\\hline
                \(d_1,\ d_2>0\) & \(X^{[1,n]}\) & \(X^{[1,n]}\)\\\hline
            \end{tabular}
        \end{center}
    \end{table}

    When \(i+1\not\equiv j\), using the definition of \(\Psi\) (equation \eqn{Psi}), \Table{<>}, and \Table{=>}, we see that the coefficient of \(\cO^w\) on the right-hand side of \eqn{odot1} equals
    \begin{enumerate}
        \item[\namedlabel{itm:a}{(a)}] \(1-[\C_{\varepsilon_i-\varepsilon_1}]\) when we are in one of the following situations:
        \begin{enumerate}
            \item[\namedlabel{itm:a1}{(a1)}] \([a,b]=[i,j]\); 
            \item[\namedlabel{itm:a2}{(a2)}] \(a=i< n\) and \(b=n-d_2n\) for some \(d_2>0\);
            \item[\namedlabel{itm:a3}{(a3)}] \(a=i=n\) and \(b=n-1-d_2n\) for some \(d_2>0\);
        \end{enumerate}
        \item[\namedlabel{itm:b}{(b)}] \([\C_{\varepsilon_i-\varepsilon_1}]\) when we are in one of the following situations:
        \begin{enumerate}
            \item[\namedlabel{itm:b1}{(b1)}] \(a=i+1\leq n\) and \(b=j\); 
            \item[\namedlabel{itm:b2}{(b2)}] \(a=i+1<n\) and \(b=n-d_2n\) for some \(d_2>0\);
            \item[\namedlabel{itm:b3}{(b3)}] \(a=i+1=n\) and \(b=n-1-d_2n\) for some \(d_2>0\);
        \end{enumerate} 
        \item[\namedlabel{itm:c}{(c)}] \(1\) when we are in one of the following situations:
        \begin{enumerate}
            \item[\namedlabel{itm:c1}{(c1)}]  \(a=1+d_1 n\) for some \(d_1>0\) and \(b=j>1\);
            \item[\namedlabel{itm:c2}{(c2)}]  \(a=2+d_1 n\) for some \(d_1>0\) and \(b=j=1\);
            \item[\namedlabel{itm:c3}{(c3)}]  \([a,b]=[1+d_1 n,n-d_2 n]\) for some \(d_1,\ d_2>0\);
        \end{enumerate}
        \item[\namedlabel{itm:d}{(d)}]  \(0\) in all other situations.
    \end{enumerate}

    \begin{table}[h!]
        \caption{Assume \(i+1=j<n\).}
        \label{tab:<}
        \begin{center}
            \begin{tabular}{|c|c|c|c|c|}
                \hline
                & \(\Gamma_d(X^{[i,j]})\) & \(\Gamma_d(X^{[i+1,j-1]})\) & \(\Gamma_d(X^{[i+2,j]})\) & \(\Gamma_d(X^{[i+2,j-1]})\)\\ \hline
                \(d=0\) & \(X^{[i,j]}\) & \(X^{[i+1,j-1]}\) & \(X^{[i+2,j]}\) & \(X^{[i+2,j-1]}\) \\\hline
                \(d_1>0\) & \multirow{2}{3em}{\centering\(X^{[1,j]}\)} & \(X^{[1,j-1]}\) & \multirow{2}{4em}{\centering\(X^{[1,j]}\)} & \(X^{[1,j-1]}\) \\
                \(d_2=0\) & & or \(X^{[2,1]}\) & & or \(X^{[2,1]}\)\\\hline
                \(d_1=0\) & \multirow{2}{3em}{\centering\(X^{[i,n]}\)} & \multirow{2}{4em}{\centering\(X^{[i+1,n]}\)} & \(X^{[i+2,n]}\)  &\(X^{[i+2,n]}\) \\
                \(d_2>0\) & & & or \(X^{[n,n-1]}\) & or \(X^{[n,n-1]}\)\\ \hline
                \(d_1,\ d_2>0\) & \(X^{[1,n]}\) & \(X^{[1,n]}\) & \(X^{[1,n]}\) & \(X^{[1,n]}\) \\\hline
            \end{tabular}
        \end{center}
    \end{table}

    \begin{table}[h!]
        \caption{Assume \([i,j]=[n-1,n]\).}
        \label{tab:n-1,n}
        \begin{center}
            \begin{tabular}{|c|c|c|c|c|}
                \hline
                & \(\Gamma_d(X^{[n-1,n]})\) & \(\Gamma_d(X^{[n,n-1]})\) & \(\Gamma_d(X^{[n+1,n]})\) & \(\Gamma_d(X^{[n+1,n-1]})\)\\ \hline
                \(d=0\) & \(X^{[n-1,n]}\) & \(X^{[n,n-1]}\) & \(\varnothing\) & \(\varnothing\) \\\hline
                \(d_1>0\) and \(d_2=0\) & \(X^{[1,n]}\) & \(X^{[1,n-1]}\) & \(X^{[1,n]}\) & \(X^{[1,n-1]}\)\\\hline
                \(d_1=0\) and \(d_2>0\) & \(X^{[n-1,n]}\) & \(X^{[n,n-1]}\) & \(\varnothing\) &\(\varnothing\)\\\hline
                \(d_1,\ d_2>0\) & \(X^{[1,n]}\) & \(X^{[1,n]}\) & \(X^{[1,n]}\) & \(X^{[1,n]}\) \\\hline
            \end{tabular}
        \end{center}
    \end{table}

    \begin{table}[h!]
        \caption{Assume \([i,j]=[n,1]\).}
        \label{tab:n,1}
        \begin{center}
            \begin{tabular}{|c|c|c|c|c|}
                \hline
                & \(\Gamma_d(X^{[n,1]})\) & \(\Gamma_d(X^{[n+1,0]})\) & \(\Gamma_d(X^{[n+2,1]})\) & \(\Gamma_d(X^{[n+2,0]})\)\\ \hline
                \(d=0\) & \(X^{[n,1]}\) & \(\varnothing\) & \(\varnothing\) & \(\varnothing\) \\\hline
                \(d_1>0\) and \(d_2=0\) & \(X^{[2,1]}\) & \(\varnothing\) & \(X^{[2,1]}\) & \(\varnothing\)\\\hline
                \(d_1=0\) and \(d_2>0\) & \(X^{[n,n-1]}\) & \(\varnothing\) & \(\varnothing\) &\(\varnothing\)\\\hline
                \(d_1=1,\ d_2>0\) & \(X^{[1,n]}\) & \(X^{[1,n]}\) & \(X^{[2,n]}\) & \(X^{[2,n]}\) \\\hline
                \(d_1>1,\ d_2>0\) & \(X^{[1,n]}\) & \(X^{[1,n]}\) & \(X^{[1,n]}\) & \(X^{[1,n]}\)\\\hline
            \end{tabular}
        \end{center}
    \end{table}

    When \(i+1\equiv j\), using the definition of \(\Psi\) (equation \eqn{Psi}) and \Table{<} through \Table{n,1}, we see that the coefficient of \(\cO^w\) on the right-hand side of \eqn{odot1} equals
    \begin{enumerate}
        \item[\namedlabel{itm:e}{(e)}] \(1-[\C_{\varepsilon_i-\varepsilon_1}]\) when we are in one of the following situations:
        \begin{enumerate}
            \item[\namedlabel{itm:e1}{(e1)}] \([a,b]=[i,j]\); 
            \item[\namedlabel{itm:e2}{(e2)}] \(a=i< n\) and \(b=n-d_2n\) for some \(d_2>0\);
            \item[\namedlabel{itm:e3}{(e3)}] \(a=i=n\) and \(b=n-1-d_2n\) for some \(d_2>0\);
        \end{enumerate}
        \item[\namedlabel{itm:f}{(f)}] \([\C_{\varepsilon_i-\varepsilon_1}]\) when we are in one of the following situations:
        \begin{enumerate}
            \item[\namedlabel{itm:f1}{(f1)}] \(a=i+1=j\) and \(b=j-1=i\);
            \item[\namedlabel{itm:f2}{(f2)}] \(a=i+2\leq n\) and \(b=j\); 
            \item[\namedlabel{itm:f3}{(f3)}] \(a=i+1<n\) and \(b=n-d_2n\) for some \(d_2>0\);
            \item[\namedlabel{itm:f4}{(f4)}] \(a=i+1=n\) and \(b=n-1-d_2n\) for some \(d_2>0\);
        \end{enumerate} 
        \item[\namedlabel{itm:g}{(g)}] \(-[\C_{\varepsilon_i-\varepsilon_1}]\) when \(a=i+2\leq n\) and \(b=j-1=i\);
        \item[\namedlabel{itm:h}{(h)}] \(1\) when we are in one of the following situations:
        \begin{enumerate}
            \item[\namedlabel{itm:h1}{(h1)}]  \(a=1+d_1 n\) for some \(d_1>0\) and \(b=j>1\);
            \item[\namedlabel{itm:h2}{(h2)}]  \(a=2+d_1 n\) for some \(d_1>0\) and \(b=j=1\);
            \item[\namedlabel{itm:h3}{(h3)}]  \([a,b]=[1+d_1 n,n-d_2 n]\) for some \(d_1,\ d_2>0\);
        \end{enumerate}
        \item[\namedlabel{itm:i}{(i)}]  \(0\) in all other situations.
    \end{enumerate}

The coefficient of \(\cO^w\) on the left-hand side of \eqn{odot1} is \(I^T(\cO^{[i,j]},\cO^{[1]};\cO_{[a,b]}^\vee)\), which we will now compute and compare with \ref{itm:a} through \ref{itm:i}. By \Corollary{expand}, \[I^T(\cO^{[i,j]},\cO^{[1]};\cO_{[a,b]}^\vee)=\sum_{[k,l]\in I([a,b])}(-1)^{\ell([k,l])+\ell([a,b])}I^T(\cO^{[i,j]},\cO^{[1]};\cO_{[k,l]}).\] By \Proposition{GW}, 
\begin{equation*}
    I^T(\cO^{[i,j]},\cO^{[1]};\cO_{[k,l]})=
    \begin{cases}
        1-[\C_{\varepsilon_k-\varepsilon_1}]&\text{if }k=i,\ l\leq j\\1&\text{if }k>i,\ l\leq j\\0&\text{otherwise }
    \end{cases}.
\end{equation*} 
Note that \(I^T(\cO^{[i,j]},\cO^{[1]};\cO_{[k,l]})\) is nonzero only when \(l\leq j\), and in this case, the value of \(I^T(\cO^{[i,j]},\cO^{[1]};\cO_{[k,l]})\) depends solely on \(k\). Let \[S_k=\sum_{l:\ [k,l]\in I([a,b]),\ l\leq j} (-1)^{\ell([k,l])+\ell([a,b])}, \] then 
\begin{equation}\label{eqn:I^T}
    I^T(\cO^{[i,j]},\cO^{[1]};\cO_{[a,b]}^\vee)=(1-[\C_{\varepsilon_i-\varepsilon_1}])S_{i} +\sum_{k>i}S_k.
\end{equation}
\(S_k\) is easily computable using \Lemma{I}. For example, when \(1<a-1=b+1=j<n\), \[S_{a}=(-1)^{\ell([a,b])+\ell([a,b])}+(-1)^{\ell([a,b+1])+\ell([a,b])}=0.\] For another example, when \(a\not\equiv 1\) and \(b=0\), \[S_{a-1}=(-1)^{\ell([a-1,b])+\ell([a,b])}=-1\] (note that \([a-1,b+1]\not\in I([a,b])\) because \(d([a-1,b+1])\neq d([a,b])\)). We shall consider the three cases in \Lemma{I} separately. \Table{A}, \Table{B}, and \Table{C} list values of \(S_k\) in each case, respectively. When the conditions in the tables are not satisfied, \(S_k=0\).

\begin{table}[h!]
    \caption{Values of \(S_k\) when \(\overline a-\overline b\neq 1,\ 2\).}
    \label{tab:A}
    \begin{center}
        \begin{tabular}{|c|c|}
            \hline
            & either \(b=j\) or \(b=n-d_2 n\) for some \(d_2>0\)\\ \hline
            \(k=a\) & \(1\)\\\hline
            \(k=a-1\) and \(a\not\equiv1\) & \(-1\)\\\hline
        \end{tabular}
    \end{center}
\end{table}

\item[Case 1: Assume \(\overline a-\overline b\neq1,\ 2\).] \Table{A} says that out of all \(S_k\),  only \(S_a\) and \(S_{a-1}\) can be nonzero when \(a\not\equiv 1\), and only \(S_a\) can be nonzero when \(a\equiv 1\). Therefore, we can reduce \eqn{I^T} to a finite sum. 
    
When \(a\not\equiv1\), equation \eqn{I^T} becomes 
\begin{align*}
    I^T(\cO^{[i,j]},\cO^{[1]};\cO_{[a,b]}^\vee)=&
    \begin{cases}
        0&\text{if }i>a\\
        (1-[\C_{\varepsilon_i-\varepsilon_1}])S_a&\text{if }i=a\\
        (1-[\C_{\varepsilon_i-\varepsilon_1}])S_{a-1}+S_{a}&\text{if }i=a-1\\
        S_{a-1}+S_a&\text{if }i<a-1
    \end{cases}.
\end{align*}
By \Table{A}, \(S_a\) and \(S_{a-1}\) are nonzero only when either \(b=j\) or \(b=n-d_2 n\) for some \(d_2>0\), and in either case they are \(1\) and \(-1\), respectively. Therefore, \(I^T(\cO^{[i,j]},\cO^{[1]};\cO_{[a,b]}^\vee)\) equals
\begin{enumerate}
    \item \(1-[\C_{\varepsilon_i-\varepsilon_1}]\) when \([a,b]=[i,j]\);
    \item \(1-[\C_{\varepsilon_i-\varepsilon_1}]\) when \(a=i<n\) and \(b=n-d_2n\) for some \(d_2>0\);
    \item \([\C_{\varepsilon_i-\varepsilon_1}]\) when \(a=i+1\leq n\) and \(b=j\);
    \item \([\C_{\varepsilon_i-\varepsilon_1}]\) when \(a=i+1< n\) and \(b=n-d_2 n\) for some \(d_2>0\);
    \item \(0\) otherwise.
\end{enumerate}
When \(\overline a-\overline b\neq1,\ 2\) and \(a\not\equiv1\), the situations \ref{itm:a1}, \ref{itm:a2}, \ref{itm:b1}, \ref{itm:b2}, \ref{itm:d}, \ref{itm:e1}, \ref{itm:e2}, \ref{itm:f3}, and \ref{itm:i} are realizable while the other ones are not. In all realizable situations, the value of \(I^T(\cO^{[i,j]},\cO^{[1]};\cO_{[a,b]}^\vee)\) agrees with the coefficient of \(\cO^w\) on the right hand side of \eqn{odot1}.

By \Table{A}, when \(a\equiv1\), equation \eqn{I^T} becomes 
\begin{align*}
    I^T(\cO^{[i,j]},\cO^{[1]};\cO_{[a,b]}^\vee)=&
    \begin{cases}
        0&\text{if }i>a\\
        (1-[\C_{\varepsilon_i-\varepsilon_1}])S_a&\text{if }i=a=1\\
        S_{a}&\text{if }i<a
    \end{cases}.
\end{align*}   
Substituting values for \(S_a\), we get that \(I^T(\cO^{[i,j]},\cO^{[1]};\cO_{[a,b]}^\vee)\) equals 
\begin{enumerate}
    \item \(1-[\C_{\varepsilon_i-\varepsilon_1}]=0\) when \(a=i=1\) and either \(b=j>1\) or \(b=n-d_2n\) for some \(d_2>0\);
    \item \(1\) when \(a=1+d_1n\) for some \(d_1>0\) and either \(b=j>1\) or \(b=n-d_2n\) for some \(d_2>0\);
    \item \(0\) otherwise. 
\end{enumerate} 
When \(a\equiv1\), the situations \ref{itm:a1}, \ref{itm:a2}, \ref{itm:c1}, \ref{itm:c3}, \ref{itm:d}, \ref{itm:e1}, \ref{itm:e2}, \ref{itm:h1}, \ref{itm:h3}, and \ref{itm:i} are realizable while the other ones are not. In all realizable situations, the value of \(I^T(\cO^{[i,j]},\cO^{[1]};\cO_{[a,b]}^\vee)\) agrees with the coefficient of \(\cO^w\) on the right-hand side of \eqn{odot1}.

\begin{table}[h!]
    \caption{Values of \(S_k\) when \(\overline a-\overline b=1\).}
    \label{tab:B}
    \begin{center}
        \begin{tabular}{|c|c|c|}
            \hline
            & \multirow{2}{3em}{\centering\(b=j\)}  & either \(b=j-1\geq 1\) or \\ & & \(b=n-1-d_2n\) for some \(d_2>0\) \\ \hline
            \(k=a\) & \(1\) & \(1\)\\\hline
            \(k=a-1\) & \(0\) & \(-1\) \\
            \hline
            \(k=a-2\) and \(a\not\equiv 2\) & \(-1\) & \(0\) \\ \hline
        \end{tabular}
    \end{center}
\end{table}        

\item[Case 2: Assume \(\overline a-\overline b=1\).]

By \Table{B}, when \(a\not\equiv 2\), equation \eqn{I^T} becomes 
\begin{align*}
    I^T(\cO^{[i,j]},\cO^{[1]};\cO_{[a,b]}^\vee)=&
    \begin{cases}
        0&\text{if }i>a\\(1-[\C_{\varepsilon_i-\varepsilon_1}])S_a&\text{if }i=a\\(1-[\C_{\varepsilon_i-\varepsilon_1}])S_{a-1}+S_{a}&\text{if }i=a-1\\(1-[\C_{\varepsilon_i-\varepsilon_1}])S_{a-2}+S_{a-1}+S_a&\text{if }i=a-2\\S_{a-2}+S_{a-1}+S_a&\text{if }i<a-2
    \end{cases}.
\end{align*}     
Substituting values for \(S_k\), we get that \(I^T(\cO^{[i,j]},\cO^{[1]};\cO_{[a,b]}^\vee)\) equals 
\begin{enumerate}
    \item \(1-[\C_{\varepsilon_i-\varepsilon_1}]\) when \([a,b]=[i,j]\) and \(i-j=1\);
    \item \(1-[\C_{\varepsilon_i-\varepsilon_1}]\) when \(a=i=n\) and \(b=n-1-d_2n\) for some \(d_2>0\);
    \item \([\C_{\varepsilon_i-\varepsilon_1}]\) when 
    \(a=i+1=j\) and \(b=j-1=i\);
    \item \([\C_{\varepsilon_i-\varepsilon_1}]\) when 
    \(a=i+1=n\) and \(b=n-1-d_2n\) for some \(d_2>0\);
    \item \([\C_{\varepsilon_i-\varepsilon_1}]\) when \(a=i+2\leq n\) and \(b=i+1=j\);
    \item \(0\) otherwise.
\end{enumerate}
When \(\overline{a}-\overline{b}=1\) and \(a\not\equiv 2\), the situations \ref{itm:a1}, \ref{itm:a3}, \ref{itm:b3}, \ref{itm:d}, \ref{itm:e3}, \ref{itm:f1}, \ref{itm:f2}, \ref{itm:f4}, and \ref{itm:i} are realizable while the other ones are not. In all realizable situations, the value of \(I^T(\cO^{[i,j]},\cO^{[1]};\cO_{[a,b]}^\vee)\) agrees with the coefficient of \(\cO^w\) on the right-hand side of \eqn{odot1}. 

When \(a\equiv2\), we have \(b\equiv 1\). By \Table{B}, equation \eqn{I^T} becomes 
\begin{align*}
    I^T(\cO^{[i,j]},\cO^{[1]};\cO_{[a,b]}^\vee)=&
    \begin{cases}
        0&\text{if }i>a\\(1-[\C_{\varepsilon_i-\varepsilon_1}])S_a&\text{if }i=a=2\\(1-[\C_{\varepsilon_i-\varepsilon_1}])S_{a-1}+S_{a}&\text{if }i=a-1=1\\S_{a-1}+S_a&\text{if }i<a-1
    \end{cases}\\=&
    \begin{cases}
        1-[\C_{\varepsilon_i-\varepsilon_1}]&\text{if }[i,j]=[a,b]=[2,1]\\
        [\C_{\varepsilon_i-\varepsilon_1}]=1&\text{if }[i,j]=[1,2]\text{ and }[a,b]=[2,1]\\1&\text{if }a=2+d_1 n\text{ for some }d_1>0\text{ and }b=j=1\\
        0&\text{otherwise }
    \end{cases}.
\end{align*}
When \(a\equiv 2\) and \(b\equiv 1\), the situations \ref{itm:a1}, \ref{itm:c2}, \ref{itm:d}, \ref{itm:f1}, \ref{itm:h2}, and \ref{itm:i} are realizable while the other ones are not. In all realizable situations, the value of \(I^T(\cO^{[i,j]},\cO^{[1]};\cO_{[a,b]}^\vee)\) agrees with the coefficient of \(\cO^w\) on the right-hand side of \eqn{odot1}.     

\begin{table}[h!]
    \caption{Values of \(S_k\) when \(\overline a-\overline b=2\).}
    \label{tab:C}
    \begin{center}
        \begin{tabular}{|c|c|c|}
        \hline
            & \(b=j\)  & \(b=j-1\geq 1\)\\ \hline
            \(k=a\) & \(1\) & \(0\)  \\\hline
            \(k=a-1\) & \(-1\) & \(-1\) \\\hline
            \(k=a-2\) & \(0\) & \(1\) \\\hline
        \end{tabular}
    \end{center}
\end{table}        

\item[Case 3: Assume \(\overline a-\overline b= 2\).]
    
By \Table{C}, equation \eqn{I^T} becomes 
\begin{align*}
    I^T(\cO^{[i,j]},\cO^{[1]};\cO_{[a,b]}^\vee)=&
    \begin{cases}
        0&\text{if }i>a\\(1-[\C_{\varepsilon_i-\varepsilon_1}])S_a&\text{if }i=a\\(1-[\C_{\varepsilon_i-\varepsilon_1}])S_{a-1}+S_{a}&\text{if }i=a-1\\(1-[\C_{\varepsilon_i-\varepsilon_1}])S_{a-2}+S_{a-1}+S_a&\text{if }i=a-2\\S_{a-2}+S_{a-1}+S_a&\text{if }i<a-2
    \end{cases}\\=&
    \begin{cases}
        1-[\C_{\varepsilon_i-\varepsilon_1}]&\text{if }[a,b]=[i,j]\text{ and }i-j=2\\
        [\C_{\varepsilon_i-\varepsilon_1}]&\text{if }
        a=i+1\leq n\text{ and }b=j=i-1\\
        -[\C_{\varepsilon_i-\varepsilon_1}]&\text{if }a=i+2\leq n\text{ and }b=j-1=i\\0&\text{otherwise }
    \end{cases}.
\end{align*} 
In this case, the situations \ref{itm:a1}, \ref{itm:b1}, \ref{itm:d}, \ref{itm:e1}, \ref{itm:g}, and \ref{itm:i} are realizable while the other ones are not. In all realizable situations, the value of \(I^T(\cO^{[i,j]},\cO^{[1]};\cO_{[a,b]}^\vee)\) agrees with the coefficient of \(\cO^w\) on the right-hand side of \eqn{odot1}.

Equations \eqn{odot1} and therefore \eqn{qkchev1} follows. Applying \Lemma{symmI} and \Lemma{diags} produces \eqn{qkchev2}.
\end{proof}

As a consequence, we have the following algorithm that recursively expresses \(\cO^{[i,j]}\) as a polynomial in \(\cO^{[1]},\ \cO^{[2]},\ q_1,\ q_2\) with coefficients in \(K^T(\pt)\) for all \([i,j]\in {W^P}\).
\begin{algo}
\item[Case 1: Assume \(i<j=n\) or \(i>j+1\).] 
\[\cO^{1,n}=1;\ \cO^{[i,j]}=[\C_{\varepsilon_1-\varepsilon_{i-1}}](\cO^{[1]}-1)\star\cO^{[i-1,j]}+\cO^{[i-1,j]}\text{ when }i>1.\]
\item[Case 2: Assume \(i<j<n\).] \[\cO^{[i,j]}=[\C_{\varepsilon_{j+1}-\varepsilon_n}](\cO^{[2]}-1)\star\cO^{[i,j+1]}+\cO^{[i,j+1]}.\]
\item[Case 3: Assume \(i=j+1\).] \[\cO^{[i,j]}=[\C_{\varepsilon_i-\varepsilon_n}](\cO^{[2]}+[\C_{\varepsilon_n-\varepsilon_i}]-1)\star\cO^{[j,i]}-\cO^{[j,j-1]}+\cO^{[i,j-1]}.\]
\end{algo}

There is a conjecture (see \cite{qkchev} for example) about the positivity of structure constants in \(QK_T(G/P)\). In the case of incidence varieties, \cite[Conjecture 2.2]{qkchev} specializes to the following statement. 
\begin{conj}\label{conj:pos}
For \(u,\ v,\ w\in\widetilde{W^P}\), let \(N_{u,v}^{w}\) be the coefficient of \(\cO^{w}\) in the product \(\cO^{u}\star\cO^{v}\) in \({QK_T(X)}_q\), then \[(-1)^{\ell(u)+\ell(v)+\ell(w)}N_{u,v}^{w}\in\N[[\C_{\varepsilon_{r+1}-\varepsilon_r}]-1: r=1,\dots,n-1].\]
\end{conj}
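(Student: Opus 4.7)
My plan is to bootstrap from the equivariant Chevalley formula in \Theorem{qkTchev} (whose positivity is \Corollary{chevpos}), using associativity of the quantum product and induction on \(\ell(v)\). First observe that we may assume \(u, v, w \in W^P\): writing \(\cO^u = q^{d(u)} \cO^{\overline u}\) and similarly for \(v, w\), the coefficient \(N_{u,v}^w\) vanishes unless \(d(w) = d(u) + d(v)\), and one checks from the formula \(\ell([x,y]) - \ell([\overline x, \overline y]) = (n-1)(d_1+d_2)\) that the parity of \(\ell(u)+\ell(v)+\ell(w)\) is unchanged under the reduction (since each factor of \((n-1)\) appears twice after imposing \(d(w)=d(u)+d(v)\)). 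So the problem reduces to positivity of \(N_{\overline u, \overline v}^{\overline w}\) for \(\overline u, \overline v, \overline w \in W^P\).

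Next I would induct on \(\ell(\overline v)\). The base case \(\overline v = [1,n]\) is trivial. For the inductive step, pick a Schubert class \(\cO^{v'}\) with \(\ell(v') = \ell(\overline v)-1\) and a divisor class \(\cO^{[k]}\) such that \(\cO^{\overline v}\) appears in \(\cO^{v'} \star \cO^{[k]}\) with nonzero coefficient. Solving the Chevalley relation expresses \(\cO^{\overline v}\) as a \(K^T(\mathrm{pt})\)-linear combination of \(\cO^{v'} \star \cO^{[k]}\) and classes \(\cO^{v''}\) with \(\ell(v'') < \ell(\overline v)\). Plugging this into \(\cO^{\overline u} \star \cO^{\overline v}\) and using associativity, positivity would follow by combining the inductive hypothesis applied to \(\cO^{\overline u} \star \cO^{v'}\) and \(\cO^{\overline u} \star \cO^{v''}\) with one more application of the Chevalley positivity to multiply by \(\cO^{[k]}\).

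The main obstacle is that the inversion of the Chevalley relation is not manifestly positive: solving the first branch of \eqn{qkchev1} for \(\cO^{[i+1,j]}\) forces one to divide by \([\C_{\varepsilon_i-\varepsilon_1}]\), which is not a unit in \(K^T(\mathrm{pt})\), and introduces negative signs in front of lower-order terms. Worse, when \(i+1 \equiv j\) the Chevalley formula has four terms with an alternating sign, so iterating inversions produces rapidly growing alternating sums. Proving that after substitution and re-expansion in the Schubert basis these alternations always cancel to produce non-negative coefficients in \(\N[t_1, \dots, t_{n-1}]\), where \(t_r = [\C_{\varepsilon_{r+1}-\varepsilon_r}]-1\), is the crux of the problem, and will likely require a combinatorial model (such as a two-step puzzle rule in the style of Knutson-Tao, extended to the equivariant quantum setting) rather than a direct algebraic manipulation.

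An alternative route would be to pull equivariant positivity back along Kato's ring homomorphism \(QK_T(G/B) \to QK_T(G/P)\) \cite{kato}, provided a sufficiently strong positivity statement can be established on the \(G/B\) side, perhaps via an enhancement of the Chevalley formula of Lenart-Naito-Sagaki \cite{lenart}. A third, more geometric, approach would be to refine the methods of \Section{geom} by studying equivariant Euler characteristics of \(M_d(g.X^u, X^v, X_w)\) for general \(g\) and arbitrary pairs of Schubert varieties (rather than just a divisor and a point), in the spirit of Anderson-Griffeth-Miller \cite{anderson2011positivity} for ordinary equivariant \(K\)-theory; the non-cominuscule nature of \(X\) means that the cohomological triviality of \(\ev_3\) can fail, so this approach would first require identifying an appropriate substitute for \eqn{qclassical} that still has positive geometric content.
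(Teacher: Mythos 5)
This statement is a \emph{conjecture} in the paper, not a theorem: the paper explicitly presents it as the specialization of \cite[Conjecture 2.2]{qkchev} to incidence varieties and offers no proof. What the paper does establish are two special cases: \Corollary{chevpos} (when one factor is a power of \(q\) times a Schubert divisor class) and \Corollary{LRpos} (the full non-equivariant specialization). So there is no paper argument for your proposal to be compared against, and it must be judged on its own merits as an attempt at an open problem.

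On those terms, your reduction to \(u,v,w\in W^P\) and the parity bookkeeping are sound, modulo the small correction that \(N_{u,v}^w\) vanishes unless \(d(w)\geq d(u)+d(v)\) rather than \(d(w)=d(u)+d(v)\); the parity argument survives since the correction still contributes an even multiple of \(n-1\). You also correctly identify exactly where the divisor-multiplication induction stalls, though one detail is misstated: \([\C_{\varepsilon_i-\varepsilon_1}]\) \emph{is} a unit in \(K^T(point)=R(T)\), with inverse \([\C_{\varepsilon_1-\varepsilon_i}]\), and the algorithm following \Theorem{qkTchev} performs precisely this inversion. The genuine obstruction, which you go on to state correctly, is that \([\C_{\varepsilon_1-\varepsilon_i}]=\prod_{r=1}^{i-1}(1+t_r)^{-1}\), with \(t_r=[\C_{\varepsilon_{r+1}-\varepsilon_r}]-1\), lies outside the positivity cone \(\N[t_1,\dots,t_{n-1}]\); so the inversion destroys manifest positivity at the very first step, and no amount of re-expansion in the Schubert basis is guaranteed to restore it without an additional structural input. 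Your proposal is therefore an honest diagnosis of the difficulty rather than a completed proof; none of the alternative routes you sketch (a puzzle-type combinatorial model, pulling positivity back along Kato's homomorphism, or a geometric argument in the spirit of Anderson--Griffeth--Miller) has been carried out for \(\Fl(1,n-1;n)\), and the conjecture remains open in the equivariant setting.
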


It follows from \Theorem{qkTchev} that the conjecture holds when \(\cO^{{v}}\) is (a power of \(q\) times) the class of a Schubert divisor. Equivalently:

\begin{cor}\label{cor:chevpos}
For \(u,\ w\in\widetilde{W^P}\), \(k=1,\ 2\), let \(N_{u,k}^{w}\) be the coefficient of \(\cO^{w}\) in the product \(\cO^{u}\star\cO^{[k]}\) in \({QK_T(X)}_q\), then \[(-1)^{\ell(u)+1+\ell(w)}N_{u,k}^{w}\in\N[[\C_{\varepsilon_{r+1}-\varepsilon_r}]-1: r=1,\dots,n-1].\]
\end{cor}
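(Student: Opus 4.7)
The plan is to deduce the corollary directly from the explicit Chevalley formula of \Theorem{qkTchev} by expanding the equivariant scalars and checking signs term by term. First I would reduce to the case $u = [i,j] \in W^P$: if $u \in \widetilde{W^P}$ has $d(u) = d$, then $\cO^u \star \cO^{[k]} = q^d(\cO^{\overline u} \star \cO^{[k]})$, and every $\cO^w$ appearing on the right shifts by the same $d$. The formula for $\ell$ given in \Section{chev} shows that $\ell$ shifts by the same constant on $u$ and on each such $w$, so the parity of $\ell(u)+\ell(w)$ is preserved, and so is the claimed positivity.

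Next, writing $y_r := [\C_{\varepsilon_{r+1}-\varepsilon_r}] - 1$ and using $\varepsilon_i - \varepsilon_1 = \sum_{r=1}^{i-1}(\varepsilon_{r+1}-\varepsilon_r)$, one has
\[
[\C_{\varepsilon_i - \varepsilon_1}] \;=\; \prod_{r=1}^{i-1}(1 + y_r) \;=\; 1 + P_i,
\qquad P_i \in \N[y_1,\ldots,y_{i-1}],
\]
with $P_i$ of zero constant term. Hence both $[\C_{\varepsilon_i-\varepsilon_1}]$ and $[\C_{\varepsilon_i-\varepsilon_1}]-1$ lie in $\N[y_1,\ldots,y_{n-1}]$, while $1-[\C_{\varepsilon_i-\varepsilon_1}]$ lies in $-\N[y_1,\ldots,y_{n-1}]$; an identical expansion handles $[\C_{\varepsilon_n-\varepsilon_j}] = \prod_{r=j}^{n-1}(1+y_r)$.

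It then remains to verify, for each of the (at most four) classes $\cO^w$ appearing on the right-hand side of \eqn{qkchev1} or \eqn{qkchev2}, that the global sign $(-1)^{\ell(u)+1+\ell(w)}$ agrees with the sign of the $\Z$-coefficient in front. The coefficient $1-[\C_{\varepsilon_i-\varepsilon_1}]$ attached to $\cO^{[i,j]}$ is automatic since $w = u$ makes $\ell(u)+\ell(w)$ even. The remaining coefficients are $\pm[\C_{\varepsilon_i-\varepsilon_1}]$, so the check reduces to a parity computation: I need $\ell(w)-\ell(u)$ to be odd for the summands $\cO^{[i+1,j]}$ (generic case) and $\cO^{[i+1,j-1]}, \cO^{[i+2,j]}$ (in the $i+1\equiv j$ case), and even for the single correction term $\cO^{[i+2,j-1]}$. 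Each case is a direct computation using
\[
\ell([x,y]) = (n-1)\!\left(1 + \tfrac{x-y}{n}\right) + \tfrac{\overline x-\overline y}{n} - \chi(\overline x > \overline y),
\]
handling separately the wraparound cases $i+1 = n+1$ and $j-1 = 0$, together with the corner $[i,j]=[n,1]$ where a wrap can occur in both coordinates.

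The main (and only) obstacle is this bookkeeping in wraparound cases, where $d(w)$ picks up a unit of $q_1$ or $q_2$ relative to $d(u)$; once $\ell$ is unpacked the parity matches routinely, because multiplying by a Schubert divisor in the graded (quantum) cohomology ring shifts degree by $1$, while the Grothendieck-ring correction term $\cO^{[i+2,j-1]}$ corresponds to a degree-$2$ shift. The positivity for $\cO^{[2]}$ can either be proved by the same direct expansion of \eqn{qkchev2}, or deduced from the $\cO^{[1]}$ case via the involution $\varphi$ together with \Lemma{symmI} and \Lemma{diags}, which interchange the two Chevalley formulas and send $y_r \mapsto y_{n-r}$, preserving $\N[y_1,\ldots,y_{n-1}]$.
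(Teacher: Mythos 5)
Your proposal is correct and follows essentially the same route as the paper: reduce to $u\in W^P$ via the $q$-shift, expand $[\C_{\varepsilon_i-\varepsilon_1}]=\prod_{r=1}^{i-1}(1+y_r)$ and $[\C_{\varepsilon_n-\varepsilon_j}]=\prod_{r=j}^{n-1}(1+y_r)$ as positive polynomials in the $y_r$, compute that $\ell$ shifts by $1$ on the terms carrying $\pm[\C_{\varepsilon_i-\varepsilon_1}]$ (except by $2$ on the correction term $\cO^{[i+2,j-1]}$ or $\cO^{[i+1,j-2]}$), and match parities. The paper's proof is exactly this but stated more tersely; your parity bookkeeping and the note that the $\varphi$-symmetry can handle $\cO^{[2]}$ are both fine.
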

\begin{proof}
    Without loss of generality, we may assume \(u=[i,j]\in W^P\). One checks that when \(i+1\not\equiv j\), 
    \begin{equation*}\label{eqn:1}
        \ell([i+1,j])=\ell([i,j-1])=\ell([i,j])+1;    
    \end{equation*} 
    when \(i+1\equiv j\), 
    \begin{equation*}\label{eqn:2}
        \ell([i+1,j-1])=\ell([i+2,j])=\ell([i,j-2])=\ell([i,j])+1,    
    \end{equation*} 
    \begin{equation*}\label{eqn:3}
        \ell([i+2,j-1])=\ell([i+1,j-2])=\ell([i,j])+2.
    \end{equation*} 
    Since one can write \([\C_{\varepsilon_i-\varepsilon_1}]-1,\ [\C_{\varepsilon_i-\varepsilon_1}],\ [\C_{\varepsilon_n-\varepsilon_j}]-1,\ [\C_{\varepsilon_n-\varepsilon_j}]\) as polynomials with positive coefficients in the classes \([\C_{\varepsilon_{r+1}-\varepsilon_r}]-1\), \eqn{qkchev1} and \eqn{qkchev2} are positive. 
\end{proof}

Let \(f: \{id\}\to T\) be the trivial group homomorphism. By \Section{pullback}, \(f^*: K^T(X)\to K(X)\) has the property that for \(\sigma_1,\dots, \sigma_m\in K^T(X)\), \[I_d(f^*\sigma_1,\dots,f^*\sigma_m)=f^*I_d^T(\sigma_1,\dots,\sigma_m).\] This implies that the Chevalley formula for non-equivariant quantum \(K\)-theory of \(X\) can be obtained by replacing all the \([\C_{\varepsilon_{r+1}-\varepsilon_r}]\) with \(1\) in \Theorem{qkTchev}.

\begin{cor}\label{cor:qkchev}In \({QK(X)}_q\), for \([i,j]\in\widetilde{W^P}\),
\[\cO^{[i,j]}\star \cO^{[1]}=\begin{cases}\cO^{[i+1,j]}&\text{if }i+1\not\equiv j\\\cO^{[i+1,j-1]}+\cO^{[i+2,j]}-\cO^{[i+2,j-1]}&\text{if }i+1\equiv j\end{cases},\]\[\cO^{[i,j]}\star \cO^{[2]}=\begin{cases}\cO^{[i,j-1]}&\text{if }i+1\not\equiv j\\ \cO^{[i+1,j-1]}+\cO^{[i,j-2]}-\cO^{[i+1,j-2]}&\text{if }i+1\equiv j\end{cases}.\]\end{cor}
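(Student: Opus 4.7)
The plan is to deduce this corollary directly from Theorem qkTchev by specializing equivariant parameters, using the functoriality of (quantum) $K$-theory with respect to group homomorphisms as developed in Section pullback.

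First, I would consider the trivial group homomorphism $f: \{id\} \to T$ and the induced pullback $f^*: K^T(X) \to K(X)$. By Lemma diags(1), for any character $\lambda$ of $T$, the class $[\C_\lambda]$ satisfies $f^*([\C_\lambda]) = 1$, since every representation of the trivial group is trivial. By Lemma diags(2), for any $T$-stable Schubert variety $X^w$, $f^*([\cO_{X^w}]) = [\cO_{X^w}] \in K(X)$, and similarly $f^*$ sends the equivariant dual basis $\cO_w^\vee$ to its non-equivariant counterpart.

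Next I would verify the compatibility of $f^*$ with $K$-theoretic Gromov-Witten invariants. Applying Lemma diags(3) and (4) to the diagram
\[
\begin{CD}
X^m @<{\ev}<< \M_{0,3}(X,d) @>{pr}>> \text{pt}
\end{CD}
\]
of proper, flat, $T$-equivariant morphisms gives the identity
\[
I_d(f^*\sigma_1,\dots,f^*\sigma_m) = f^* I_d^T(\sigma_1,\dots,\sigma_m)
\]
for $\sigma_1,\dots,\sigma_m \in K^T(X)$. Consequently, $f^*$ extends $\Z[q_1,q_2,q_1^{-1},q_2^{-1}]$-linearly to a ring homomorphism $QK_T(X)_q \to QK(X)_q$ sending $\cO^w \mapsto \cO^w$ and the quantum product $\star$ to $\star$, since both products are determined by Gromov-Witten invariants via the formalism of Definition circPsi and \cite[Proposition 2.3]{qkchev}.

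Finally, I would apply $f^*$ to both equations \eqn{qkchev1} and \eqn{qkchev2} in Theorem qkTchev. Since $f^*([\C_{\varepsilon_i - \varepsilon_1}]) = f^*([\C_{\varepsilon_n - \varepsilon_j}]) = 1$, the coefficients $1 - [\C_\alpha]$ vanish and the coefficients $[\C_\alpha]$ become $1$, yielding exactly the claimed non-equivariant formulas. There is no substantive obstacle here: the entire argument is a routine specialization, and the only thing to verify carefully is the compatibility of $f^*$ with quantum products, which reduces to the compatibility with Gromov-Witten invariants that follows immediately from the commuting diagrams in Lemma diags.
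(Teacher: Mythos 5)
Your proposal is correct and takes the same approach as the paper: the paper also obtains this corollary by applying $f^*$ for the trivial group homomorphism $f:\{id\}\to T$, invoking the compatibility of $f^*$ with Gromov-Witten invariants from \Section{pullback}, and observing that this replaces all torus characters $[\C_\lambda]$ by $1$ in \Theorem{qkTchev}. The paper states this more tersely in the paragraph immediately preceding the corollary, but the reasoning is identical.
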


Using \cite[Theorem 2.18]{kato} with the projection \(p_1: X\to\P^{n-1}\), we recover the following Chevalley formula for the equivariant quantum \(K\)-theory of projective spaces, which is a special case of \cite[Theorem 3.9]{qkchev}, \cite[Theorem I]{kouno2020}, and \cite[Theorem 8]{kouno2021}. 

\begin{notation}\label{notation:proj}
    Let \[\cO^k=\begin{cases}[\cO_{V_+(x_1,\dots,x_k)}]\in K^T(\P^{n-1})&\text{if }0\leq k<n\\q\cO^{k-n}&\text{if }n\leq k<2n\end{cases},\] where \(x_1,\dots,x_n\) are the homogeneous coordinates of \(\P^{n-1}\).
\end{notation}

\begin{cor}\label{cor:chevP^n}
    In \(QK_T(\P^{n-1})\), for \(k=1,\dots, n\), \[\cO^{k-1}\star\cO^1=
        (1-[\C_{\varepsilon_{k}-\varepsilon_1}])\cO^{k-1}+[\C_{\varepsilon_{k}-\varepsilon_1}]\cO^{k}.\]
\end{cor}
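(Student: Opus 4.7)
The plan is to invoke Kato's Theorem 2.18 from \cite{kato} applied to the projection $p_1: X\to \P^{n-1}$, which furnishes a $K^T(\pt)$-algebra homomorphism $\pi: QK_T(X)_q \to QK_T(\P^{n-1})_q$. The whole proof should come down to applying $\pi$ to the Chevalley formula of \Theorem{qkTchev} and matching terms, so the first task is to unpack how $\pi$ acts on the relevant generators. Since a degree $(1,0)$ curve in $X$ is the class of a line in $\P^{n-1}$ while a degree $(0,1)$ curve is contracted by $p_1$, I expect $\pi(q_1) = q$ and $\pi(q_2) = 1$. On Schubert classes, $X^{[i,j]}$ projects to $V(x_1,\dots,x_{i-1}) \subset \P^{n-1}$, so I would verify that $\pi(\cO^{[i,j]})$ depends only on $i$ and on $d([i,j])$ (not on $j$ separately); in particular $\pi(\cO^{[1]}) = \pi(\cO^{[2,n]}) = \cO^1$ and $\pi(\cO^{[k,n]}) = \cO^{k-1}$ for $k = 1,\dots,n-1$, while $\pi(\cO^{[n,1]}) = \cO^{n-1}$, all using \Notation{proj}.

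Next I would specialize \eqn{qkchev1} to $[i,j] = [k,n]$ with $k = 1,\dots,n-1$. When $k < n-1$ we have $k+1 \not\equiv n$, so the first branch applies and $\pi$ gives directly
\[\cO^{k-1}\star\cO^1 = (1-[\C_{\varepsilon_k-\varepsilon_1}])\cO^{k-1} + [\C_{\varepsilon_k-\varepsilon_1}]\cO^k,\]
which is the claim. For $k = n-1$, the pair $[n-1,n]$ satisfies $i+1 \equiv j$, so the second branch produces the three-term sum $\cO^{[n,n-1]} + \cO^{[n+1,n]} - \cO^{[n+1,n-1]}$; under the identifications $\cO^{[n+1,n]} = q_1\cO^{[1,n]}$ and $\cO^{[n+1,n-1]} = q_1\cO^{[1,n-1]}$, I would show that the last two $\pi$-images both equal $q$ and cancel, leaving $\pi(\cO^{[n,n-1]}) = \cO^{n-1}$. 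For the boundary case $k = n$, I would apply $\pi$ to $\cO^{[n,1]}\star\cO^{[1]}$: this again falls into the $i+1 \equiv j$ case, and a parallel computation using $\cO^{[n+1,0]} = q_1q_2\cO^{[1,n]}$, $\cO^{[n+2,1]} = q_1\cO^{[2,1]}$, $\cO^{[n+2,0]} = q_1q_2\cO^{[2,n]}$ should give $\pi(\cO^{[n+1,0]}) = q$ and $\pi(\cO^{[n+2,1]}) = \pi(\cO^{[n+2,0]})$, so that the three-term sum collapses to $q = \cO^n$, yielding the formula for $k = n$.

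The main obstacle is pinning down Kato's homomorphism $\pi$ on each Schubert class with enough precision to justify the claim that $\pi(\cO^{[i,j]})$ is insensitive to $j$ beyond its contribution to $d([i,j])$. Once this is in hand, the entire corollary reduces to substitution into \Theorem{qkTchev} plus the two small cancellation checks in the cases $k \in \{n-1,n\}$ described above.
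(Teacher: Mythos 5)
Your approach is exactly the one the paper takes: the paper's entire ``proof'' of \Corollary{chevP^n} is the one-sentence remark preceding it, namely that Kato's Theorem 2.18 applied to $p_1:X\to\P^{n-1}$ recovers the formula from \Theorem{qkTchev}. Your fleshed-out version is correct: the identifications $\pi(q_1)=q$, $\pi(q_2)=1$, $\pi(\cO^{[i,j]})=\cO^{i-1}$ (with the quantum-degree bookkeeping from \Notation{tilde} applied and then $q_2\mapsto 1$) are exactly what Kato's homomorphism produces, and your term-by-term cancellations in the boundary cases $k=n-1$ (where $\pi(\cO^{[n+1,n]})=\pi(\cO^{[n+1,n-1]})=q$) and $k=n$ (where $\pi(\cO^{[n+2,1]})=\pi(\cO^{[n+2,0]})=q\cO^1$ and $\pi(\cO^{[n+1,0]})=q=\cO^n$) check out.

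One small remark on the ``main obstacle'' you flag: you are right that the heart of the matter is what $\pi$ does on Schubert classes, but this cannot be derived from \Theorem{qkTchev} alone (the argument would be circular, since the target Chevalley formula is what you are trying to prove); it must be read off from Kato's theorem, which is formulated precisely in terms of matching Schubert bases under the parabolic projection $W^P\to W^Q$. As a sanity check that your description of $\pi$ is the right one, note that applying $\pi$ to \eqn{qkchev2} with $\pi(\cO^{[2]})=1$ forces $\pi(\cO^{[i,j-1]})=\pi(\cO^{[i,j]})$ whenever $i+1\not\equiv j$, which is exactly the $j$-insensitivity you need; this consistency gives confidence that the ansatz is compatible with the ring structure even before consulting Kato's statement in detail.
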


\section{Quantum \(K\)-theoretic Littlewood--Richardson Rule}\label{sec:LR}

The goal of this section is to prove the following closed formula for quantum \(K\)-theoretic Littlewood--Richardson coefficients as well as some corollaries.

\begin{thm}\label{thm:LR}In \(QK(X)\), for \([i,j],\ [k,l]\in W^P\),
\begin{equation}\label{eqn:LR} 
    \cO^{[i,j]}\star\cO^{[k,l]}=\begin{cases}\cO^{[x,y]}&\text{if }x-y<n[\chi(i>j)+\chi(k>l)]\\ \cO^{[x,y-1]}+\cO^{[x+1,y]}-\cO^{[x+1,y-1]}&\text{if }x-y\geq n[\chi(i>j)+\chi(k>l)]\end{cases},
\end{equation}where $x=i+k-1$, $y=j+l-n$.\end{thm}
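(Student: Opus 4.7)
The plan is to induct on the Schubert length $\ell([k,l])$. The base case $[k,l]=[1,n]$ is immediate: since $\cO^{[1,n]}=1$, one checks that $x-y = i-j < n\chi(i>j) = n[\chi(i>j)+\chi(1>n)]$, so the formula returns the single term $\cO^{[i,j]}$.

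For the inductive step in the \emph{non-boundary} case $k \neq l+1$, the non-equivariant Chevalley formula (Corollary \ref{cor:qkchev}) supplies a length-decreasing single-term reduction: $\cO^{[k,l]} = \cO^{[k-1,l]} \star \cO^{[1]}$ when $k > 1$, or $\cO^{[k,l]} = \cO^{[k,l+1]} \star \cO^{[2]}$ when $l < n$ (at least one of these applies unless $[k,l]=[1,n]$). Thus
\[
\cO^{[i,j]} \star \cO^{[k,l]} = \bigl(\cO^{[i,j]} \star \cO^{[k-1,l]}\bigr) \star \cO^{[1]},
\]
and one applies the inductive hypothesis to the inner factor, then Chevalley to the outer divisor multiplication. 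Matching the result against the Littlewood--Richardson formula for $([i,j],[k,l])$ is a routine case analysis on whether $x-y$ crosses the threshold $n[\chi(i>j)+\chi(k>l)]$ and whether the Chevalley multiplication triggers its 3-term case; one verifies that the two case splits are compatible, e.g.\ a single-term LR output with $x \equiv y \pmod n$ never occurs because that condition forces $x-y \in \{0,n,2n\}$, and in each of these subcases the threshold inequality is non-strict, placing us in the 3-term branch.

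The main obstacle is the \emph{boundary case} $[k,l]=[l+1,l]$ for $l \in \{1, \ldots, n-1\}$, where no length-decreasing single-term reduction exists. The relevant Chevalley identity is the 3-term rule
\[
\cO^{[l,l+1]} \star \cO^{[1]} = \cO^{[l+1,l]} + \cO^{[l+2,l+1]} - \cO^{[l+2,l]},
\]
which introduces a second boundary class $\cO^{[l+2,l+1]}$ of the same length $n-1$ and the longer class $\cO^{[l+2,l]} = \cO^{[l+1,l]} \star \cO^{[1]}$. Setting $A_l := \cO^{[i,j]} \star \cO^{[l+1,l]}$ and noting that $\cO^{[i,j]} \star \cO^{[l+2,l]} = A_l \star \cO^{[1]}$, multiplying the 3-term identity by $\cO^{[i,j]}$ yields the recursion
\[
A_{l+1} = \cO^{[i,j]} \star \cO^{[l,l+1]} \star \cO^{[1]} - A_l \star \bigl(1 - \cO^{[1]}\bigr),
\]
whose right-hand side is known by the inductive hypothesis (applied to $\cO^{[i,j]} \star \cO^{[l,l+1]}$, since $\ell([l,l+1]) = n-2$) followed by Chevalley. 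Together with the wrap-around boundary value $A_n = \cO^{[i,j]} \star \cO^{[n+1,n]} = q_1\,\cO^{[i,j]}$ (from $\cO^{[n+1,n]} = q_1$) and the fact that $1-\cO^{[1]}$ is a non-zero-divisor in $QK(X)$ (being the class of the line bundle $\cO(-D^{[1]})$ in the classical limit $q=0$, so that multiplication by $1-\cO^{[1]}$ is represented by a matrix over $\Z[q_1,q_2]$ with nonzero determinant), this recursion uniquely determines $A_{n-1}, A_{n-2}, \ldots, A_1$. The proof concludes by verifying, through a direct algebraic computation using the explicit form of the Littlewood--Richardson formula, that its values for $A_l$ satisfy both this recursion and the boundary condition at $l = n$.
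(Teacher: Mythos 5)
Your proposal is correct in outline but takes a genuinely different route. The paper invokes \Lemma{alg} (a general associativity lemma from Buch--Kresch--Purbhoo--Tamvakis): it defines a bilinear map $\mu$ by the Littlewood--Richardson formula and reduces the whole theorem to checking $\mu(\cO^{[i,j]}\star\cO^{[a]},\cO^{[k,l]})=\mu(\cO^{[i,j]},\cO^{[a]}\star\cO^{[k,l]})$ for $a=1,2$ and all $[i,j],[k,l]$, which it does case-by-case with \Lemma{chi}. That lemma handles the boundary classes $[l+1,l]$ automatically, so no special bootstrapping is needed. You instead do a direct induction on $\ell([k,l])$, which forces you to confront the fact that the boundary classes admit no length-decreasing single-term Chevalley reduction, and you resolve this by setting up the recursion $A_{l+1}=\cO^{[i,j]}\star\cO^{[l,l+1]}\star\cO^{[1]}-A_l(1-\cO^{[1]})$ for $A_l=\cO^{[i,j]}\star\cO^{[l+1,l]}$, anchored at the wrap-around value $A_n=q_1\cO^{[i,j]}$ and propagated backward using that $1-\cO^{[1]}$ is a non-zero-divisor (its matrix in the Schubert basis has determinant $\pm1+O(q)\neq 0$ over $\Z[q_1,q_2]$, since at $q=0$ it is multiplication by the invertible class of $\cO(-D^{[1]})$). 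This is a clever workaround and I believe it is sound; your sanity check that a single-term output never produces $x\equiv y\pmod n$ is also correct, and the analogous checks for the three-term output ($x\not\equiv y\pm1\pmod n$) follow similarly. The paper's route is cleaner and requires less bookkeeping since the abstract lemma eliminates the boundary asymmetry; your route is more elementary in that it avoids \Lemma{alg} entirely, at the cost of the non-zero-divisor digression. Two caveats: first, the two verifications you leave as ``routine case analysis'' and ``direct algebraic computation'' are really the bulk of the proof (they amount to the paper's Cases 1--4 reorganized, and should be spelled out, using the $\pmod n$ bookkeeping identities the paper packages as \Lemma{chi}); second, an alternative that avoids the non-zero-divisor argument is to observe from iterated Chevalley that $\cO^{[2,1]}=(\cO^{[2]})^{n-1}-q_2(1-\cO^{[1]})$, which gives $A_1$ directly and lets you run the recursion forward.
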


\begin{remark}
    When \(i=j+1\), we have 
    \[
        \cO^{[j+1,j]}\star\cO^{[k,l]}=\cO^{[j+k,j+l-n]}\text{ and }\ell([j+1,j])+\ell([k,l])=\ell([j+k,j+l-n]),
    \] 
    where \(\ell([j+k,j+l-n])\) is computed by \Definition{length}. These are exactly the Seidel products and the same formula holds in quantum cohomology, see \cites{Bel04,CMP09,Seidel}.
\end{remark}

\begin{example}
    Some products in \(QK(\Fl(1,4;5))\) are as follows: 
    \[\begin{aligned}\cO^{[1,3]}\star\cO^{[1,5]}&=\cO^{[1,3]};\\
    \cO^{[1,2]}\star\cO^{[2,1]}&=\cO^{[2,-2]}=q_2\cO^{[2,3]};\\
    \cO^{[2,1]}\star\cO^{[5,1]}&=\cO^{[6,-3]}=q_1 q_2 \cO^{[1,2]};\\
    \cO^{[1,2]}\star\cO^{[3,5]}&=\cO^{[3,1]}+\cO^{[4,2]}-\cO^{[4,1]};\\
    \cO^{[2,3]}\star\cO^{[4,5]}&=\cO^{[5,2]}+\cO^{[6,3]}-\cO^{[6,2]}=\cO^{[5,2]}+q_1\cO^{[1,3]}-q_1\cO^{[1,2]};\\
    \cO^{[1,2]}\star\cO^{[1,2]}&=\cO^{[1,-2]}+\cO^{[2,-1]}-\cO^{[2,-2]}=q_2\cO^{[1,3]}+q_2\cO^{[2,4]}-q_2\cO^{[2,3]};\\
    \cO^{[1,2]}\star\cO^{[5,1]}&=\cO^{[5,-3]}+\cO^{[6,-2]}-\cO^{[6,-3]}=q_2\cO^{[5,2]}+q_1 q_2 \cO^{[1,3]}-q_1 q_2 \cO^{[1,2]};\\
    \cO^{[3,1]}\star\cO^{[5,1]}&=\cO^{[7,-4]}+\cO^{[8,-3]}-\cO^{[8,-4]}=q_1 q_2\cO^{[2,1]}+q_1 q_2 \cO^{[3,2]}-q_1 q_2 \cO^{[3,1]}.\end{aligned}\]
\end{example}

We will use \cite[Lemma 2.1]{buch2016puzzle}, which is restated below.

\begin{lemma}\label{lemma:alg}
Let $R$ be an associative ring with unit $1$.
Let $S\subset R$ be a subset that generates R as a $\mathbb{Z}$-algebra. Let $M$ be a left $R$-module.
Let $\mu : R\times M \to M$ be any $\mathbb{Z}$-bilinear map.
Assume that for all $r \in R$, $s \in S$, and $m \in M$ we have:
\begin{enumerate}
    \item $\mu(1,m)=m$;
    \item $\mu(rs,m)=\mu(r,sm)$.
\end{enumerate}
Then $\mu(r,m) = rm$ for all $r \in R$ and $m \in M$.
\end{lemma}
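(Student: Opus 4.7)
The plan is to prove the lemma by induction on word length, exploiting the generating condition on $S$ and the $\mathbb{Z}$-bilinearity of both $\mu$ and the module action. Since $S$ generates $R$ as a $\mathbb{Z}$-algebra, every $r \in R$ can be written as a finite $\mathbb{Z}$-linear combination $r = \sum_i a_i w_i$, where each $w_i = s_{i,1} s_{i,2} \cdots s_{i,n_i}$ is a (possibly empty) product of elements of $S$, with the empty product interpreted as $1$. Because $\mu(\cdot, m)$ and the map $r \mapsto rm$ are both $\mathbb{Z}$-linear in the first argument, it suffices to verify $\mu(w, m) = wm$ for every such word $w$ and every $m \in M$.

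I would then induct on the length $n$ of the word $w = s_1 s_2 \cdots s_n$. The base case $n = 0$ is $w = 1$, and here $\mu(1, m) = m = 1 \cdot m$ by hypothesis (1). For the inductive step, fix $n \geq 1$ and assume the claim for all words of length $n - 1$ and all elements of $M$. Writing $w = w' s_n$ with $w' = s_1 \cdots s_{n-1} \in R$ and $s_n \in S$, hypothesis (2) gives
\[
\mu(w, m) \;=\; \mu(w' s_n, m) \;=\; \mu(w', s_n m).
\]
Applying the inductive hypothesis to the word $w'$ and the element $s_n m \in M$, this equals $w'(s_n m) = (w' s_n) m = w m$, where the second equality uses associativity of the module action. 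This closes the induction, and combining with the first paragraph yields $\mu(r, m) = rm$ for all $r \in R$ and $m \in M$.

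There is no real obstacle here; the content of the lemma is a formal unwinding of the hypotheses. The only subtlety is to keep the intended action $rm$ notationally distinct from the a priori arbitrary pairing $\mu(r, m)$, and to remember that hypothesis (2) only allows us to peel off one element of $S$ at a time on the right of the first argument, which is exactly what the induction on word length is designed to handle.
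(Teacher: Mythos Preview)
Your proof is correct. The paper does not actually give its own proof of this lemma; it simply restates \cite[Lemma 2.1]{buch2016puzzle} and uses it as a black box, so there is nothing to compare against beyond noting that your induction on word length is the standard (and essentially only) way to unwind the hypotheses.
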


We will also use the following lemma.

\begin{lemma}\label{lemma:chi}
    Let \([a,b]\in W^P\). When \(a+1\not\equiv b\), we have: 
    \begin{equation}\label{eqn:<<}
        -n<a-b-n\chi(a>b)<-1;
    \end{equation}
    \begin{equation}
        \overline{a+1}-n\chi(\overline{a+1}>b)=a+1-n\chi(a>b).\label{eqn:i+1j}
    \end{equation}
    When \(a+1\equiv b\), we have: 
    \begin{equation}\label{eqn:=}
        a+1-b=n\chi(a>b);
    \end{equation}
    \begin{equation}\label{eqn:i+1j-1}
        \overline{a+1}-\overline{b-1}+n-1=n\chi(\overline{a+1}>\overline{b-1});
    \end{equation}
    \begin{equation}\label{eqn:i+2j}
        \overline{a+2}-b+n-1=n\chi(\overline{a+2}>b);
    \end{equation}
    \begin{align}\label{eqn:i+2j-1}
        \overline{a+2}-\overline{b-1}+n-2=n\chi(\overline{a+2}>\overline{b-1}).
    \end{align}
\end{lemma}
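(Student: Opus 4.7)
The plan is a direct case analysis exploiting the fact that $[a,b] \in W^P$ forces $a, b \in \{1, \ldots, n\}$ with $a \neq b$, so $a - b \in \{-(n-1), \ldots, -1\} \cup \{1, \ldots, n-1\}$. I would split immediately on the two hypotheses of the lemma and, within each, on whether $a < b$ or $a > b$.

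\textbf{Case $a+1 \not\equiv b$.} Here the key observation is that the condition $a+1 \equiv b \pmod{n}$ for $a, b \in \{1,\ldots,n\}$ is equivalent to either $b = a+1$ (when $a < b$) or $a - b = n-1$, i.e.\ $(a,b)=(n,1)$ (when $a > b$). So the hypothesis rules out exactly $a - b \in \{-1, n-1\}$. For \eqref{eqn:<<} I would verify: if $a < b$ then $\chi(a>b)=0$ and $a-b \in \{-(n-1),\ldots,-2\}$; if $a > b$ then $\chi(a>b)=1$ and $a-b-n \in \{1-n,\ldots,-2\}$. Either way $-n < a-b-n\chi(a>b) \leq -2 < -1$. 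For \eqref{eqn:i+1j} I would further split on $a = n$ versus $a < n$: in the latter $\overline{a+1}=a+1$ and the identity reduces to $\chi(a+1>b)=\chi(a>b)$, which fails only when $a+1 = b$ (excluded); in the former $\overline{a+1}=1$ and since $b < n$ both sides evaluate to $1$.

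\textbf{Case $a+1 \equiv b$.} By the same observation, this forces exactly one of two situations: (i) $a+1 = b$ (with $a \leq n-1$, $b \geq 2$), or (ii) $(a,b)=(n,1)$. Equation \eqref{eqn:=} is immediate in both. For \eqref{eqn:i+1j-1}, \eqref{eqn:i+2j}, \eqref{eqn:i+2j-1}, I would evaluate $\overline{a+1}, \overline{a+2}, \overline{b-1}$ directly in each subcase, handling the wrap-arounds $\overline{n+1}=1$ and $\overline{0}=n$ carefully. In situation (i) with $b \leq n-1$, all three $\overline{\cdot}$'s are ``generic'' (no wrap), both sides equal $n$, and the $\chi$-values are $1$. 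The wrap-around subcases are (i) with $b=n$ (so $\overline{a+2}=1$) and situation (ii) (where $\overline{b-1}=n$ and $\overline{a+2}=2$); in each of these the left-hand sides of \eqref{eqn:i+1j-1}, \eqref{eqn:i+2j}, \eqref{eqn:i+2j-1} collapse to $0$ and the indicators on the right vanish as well (using $n \geq 3$, as assumed throughout the paper, for the comparison $2 > n$ in \eqref{eqn:i+2j-1}).

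I do not anticipate any real obstacle: the entire argument is bookkeeping on the residues mod $n$, with the only substantive content being the characterization ``$a+1 \equiv b \pmod n$ and $a,b \in \{1,\ldots,n\}$ implies $b=a+1$ or $(a,b)=(n,1)$,'' which makes each of the four equations split into at most two short subcases. I would present this as a compact table of the values of $\overline{a+1}, \overline{a+2}, \overline{b-1}$ and of the relevant $\chi$-values in each subcase, from which all six identities can be read off simultaneously.
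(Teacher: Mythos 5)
Your overall strategy is exactly the paper's: a direct case split on $a+1 \equiv b$ versus $a+1 \not\equiv b$, a further split into $b = a+1$ versus $(a,b)=(n,1)$ within the former, and a direct evaluation of the reductions mod $n$ in each subcase. The treatment of \eqref{eqn:<<}, \eqref{eqn:i+1j}, and \eqref{eqn:=} is correct as written.

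However, your claim about the ``wrap-around subcases'' is wrong in two places, and since you present it as the computation you would carry out, it needs correcting. You assert that in subcase (i) with $b=n$ (so $a=n-1$) and in subcase (ii) $(a,b)=(n,1)$, the left-hand sides of \eqref{eqn:i+1j-1}, \eqref{eqn:i+2j}, \eqref{eqn:i+2j-1} ``all collapse to $0$.'' That is false. When $(a,b)=(n-1,n)$, only $\overline{a+2}=1$ wraps; $\overline{a+1}=n$ and $\overline{b-1}=n-1$ do not, so the left-hand side of \eqref{eqn:i+1j-1} is $n-(n-1)+n-1=n$, not $0$ (and $\chi(n>n-1)=1$, so the right-hand side is also $n$). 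Similarly, when $(a,b)=(n,1)$, the left-hand side of \eqref{eqn:i+2j} is $\overline{n+2}-1+n-1=2-1+n-1=n$, not $0$ (and $\chi(2>1)=1$, so again the right-hand side is $n$). The identities still hold, but via $n=n$ rather than $0=0$. The only cases where both sides genuinely vanish are: \eqref{eqn:i+1j-1} at $(n,1)$; \eqref{eqn:i+2j} at $(n-1,n)$; and \eqref{eqn:i+2j-1} at both $(n-1,n)$ and $(n,1)$. A correct version of your table needs to record these distinctions (this is precisely what the paper's proof does, listing the three subcases $a+1=b<n$, $[a,b]=[n,1]$, $[a,b]=[n-1,n]$ separately and getting different left-hand-side values for \eqref{eqn:i+2j} in the last two).
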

\begin{proof}
    \[a-b-n\chi(a>b)=\begin{cases}
        a-b&\text{if }a<b\\a-b-n&\text{if }a>b
    \end{cases}.\] This implies \eqn{<<} and \eqn{=}. 
    
    Assume \(a+1\not\equiv b\). \[\overline{a+1}-n\chi(\overline{a+1}>b)=
    \begin{cases}
        a+1-n\chi(a+1>b)&\text{if }a<n\\
        1&\text{if }a=n    
    \end{cases}.\] In either case, it is equal to \(a+1-n\chi(a>b)\). This proves \eqn{i+1j}. 
    
    Assume \(a+1\equiv b\). \[\overline{a+1}-\overline{b-1}+n-1=\begin{cases}
        a+1-b+n=n&\text{if }a+1=b\\0&\text{if }[a,b]=[n,1]
    \end{cases}.\] In either case, it is equal to \(n\chi(\overline{a+1}>\overline{b-1})\). This proves \eqn{i+1j-1}. \[\overline{a+2}-b+n-1=
    \begin{cases}
        a+1-b+n=n&\text{if }a+1=b<n\\n&\text{if }[a,b]=[n,1]\\0&\text{if }[a,b]=[n-1,n]
    \end{cases}.\] In all cases, it is equal to \(n\chi(\overline{a+2}>b)\). This proves \eqn{i+2j}. \[\overline{a+2}-\overline{b-1}+n-2=\begin{cases}
        a+1-b+n=n&\text{if }a+1=b<n\\0&\text{if }[a,b]=[n,1]\text{ or }[n-1,n]
    \end{cases}.\] In either case, it is equal to \(n\chi(\overline{a+2}>\overline{b-1})\). This proves \eqn{i+2j-1}.
\end{proof}

\begin{proof}[Proof of \Theorem{LR}]
Define a \(K^T(\pt)(\!(q_1,q_2)\!)\)-bilinear map \[\mu: QK(X)_q\times QK(X)_q\to QK(X)_q\] so that for \([i,j],\ [k,l]\in\widetilde{W^P}\), \begin{align*}
    \mu(\cO^{[i,j]},\cO^{[k,l]})&=\begin{cases}\cO^{[x,y]}&\text{if } \overline{i}-\overline{j}+\overline{k}-\overline{l}+n-1<n[\chi(\overline{i}>\overline{j})+\chi(\overline{k}>\overline{l})]\\ \cO^{[x,y-1]}+\cO^{[x+1,y]}-\cO^{[x+1,y-1]}&\text{if } \overline{i}-\overline{j}+\overline{k}-\overline{l}+n-1\geq n[\chi(\overline{i}>\overline{j})+\chi(\overline{k}>\overline{l})]\end{cases},    
\end{align*} where $x=i+k-1$, $y=j+l-n$. By \Lemma{alg}, it suffices to show that for all \([i,j],\ [k,l]\in W^P\), \begin{enumerate}
    \item $\mu(\cO^{[1,n]},\cO^{[k,l]})=\cO^{[k,l]}$;
    \item $\mu(\cO^{[i,j]}\star\cO^{[a]},\cO^{[k,l]})=\mu(\cO^{[i,j]},\cO^{[a]}\star\cO^{[k,l]})$ for \(a=1,2\).
\end{enumerate}

(1) follows easily. Direct verification shows that $\mu$ is compatible with $\varphi^*:QK(X)\to QK(X)$. Therefore, for (2), it suffices to show \begin{equation}\label{eqn:assoc}\mu(\cO^{[i,j]}\star\cO^{[1]},\cO^{[k,l]})=\mu(\cO^{[i,j]},\cO^{[1]}\star\cO^{[k,l]}).\end{equation} We now prove \eqn{assoc} case by case.

\item[Case 1: Assume $i+1\not\equiv j$ and $k+1\not\equiv l$.]

\[\mu(\cO^{[i,j]}\star\cO^{[1]},\cO^{[k,l]})
=\mu(\cO^{[i+1,j]},\cO^{[k,l]});\ \mu(\cO^{[i,j]},\cO^{[1]}\star\cO^{[k,l]})=\mu(\cO^{[i,j]},\cO^{[k+1,l]}).\] To show that \[\mu(\cO^{[i+1,j]},\cO^{[k,l]})=\mu(\cO^{[i,j]},\cO^{[k+1,l]}),\] it suffices to show that 
\[(\overline{i+1}-j+k-l+n-1)-n[\chi(\overline{i+1}>j)+\chi(k>l)]\] and \[(i-j+\overline{k+1}-l+n-1)-n[\chi(i>j)+\chi(\overline{k+1}>l)]\] have the same sign. By \eqn{i+1j}, they are both equal to \[i-j+k-l+n-n[\chi(i>j)+\chi(k>l)].\]

\item[Case 2: Assume $i+1\not\equiv j$ and $k+1\equiv l$.]

\[\mu(\cO^{[i,j]}\star\cO^{[1]},\cO^{[k,l]})
=\mu(\cO^{[i+1,j]},\cO^{[k,l]});\]
\[\mu(\cO^{[i,j]},\cO^{[1]}\star\cO^{[k,l]})
=\mu(\cO^{[i,j]},\cO^{[k+1,l-1]}+\cO^{[k+2,l]}-\cO^{[k+2,l-1]}).\]
We have     
\begin{equation*}
    (\overline{i+1}-j+k-l+n-1)-n[\chi(\overline{i+1}>j)+\chi(k>l)]=i-j-n\chi(i>j)+n-1\geq 0,
\end{equation*}
where the equality follows from \eqn{i+1j} and \eqn{=}, and the inequality follows from \eqn{<<}.
Therefore, \[\mu(\cO^{[i+1,j]},\cO^{[k,l]})=\cO^{[i+k,j+l-n-1]}+\cO^{[i+k+1,j+l-n]}-\cO^{[i+k+1,j+l-n-1]}.\] On the other hand, by \eqn{i+1j-1} and \eqn{<<}, we have
\[i-j+\overline{k+1}-\overline{l-1}+n-1=i-j+n\chi(\overline{k+1}>\overline{l-1})<n[\chi(i>j)+\chi(\overline{k+1}>\overline{l-1})];\] by \eqn{i+2j} and \eqn{<<}, we have 
\[i-j+\overline{k+2}-l+n-1=i-j+n\chi(\overline{k+2}>l)<n[\chi(i>j)+\chi(\overline{k+2}>l)];\]
by \eqn{i+2j-1} and \eqn{<<}, we have 
\begin{equation*}
    i-j+\overline{k+2}-\overline{l-1}+n-1=i+1-j+n\chi(\overline{k+2}>\overline{l-1})<n[\chi(i>j)+\chi(\overline{k+2}>\overline{l-1})].
\end{equation*}
Therefore, \begin{align*}&\mu(\cO^{[i,j]},\cO^{[k+1,l-1]}+\cO^{[k+2,l]}-\cO^{[k+2,l-1]})\\=&\cO^{[i+k,j+l-n-1]}+\cO^{[i+k+1,j+l-n]}-\cO^{[i+k+1,j+l-n-1]}\\=&\mu(\cO^{[i+1,j]},\cO^{[k,l]}).\end{align*}

\item[Case 3: Assume $i+1\equiv j$ and $k+1\not\equiv l$.]
This follows from Case 2 by commutativity.

\item[Case 4: Assume $i+1\equiv j$ and $k+1\equiv l$.]
\[\mu(\cO^{[i,j]}\star\cO^{[1]},\cO^{[k,l]})\\
=\mu(\cO^{[i+1,j-1]}+\cO^{[i+2,j]}-\cO^{[i+2,j-1]},\cO^{[k,l]});\]
\[\mu(\cO^{[i,j]},\cO^{[1]}\star\cO^{[k,l]})\\
=\mu(\cO^{[i,j]},\cO^{[k+1,l-1]}+\cO^{[k+2,l]}-\cO^{[k+2,l-1]}).\]
We will prove that 
\begin{equation}\label{eqn:term1}
    \mu(\cO^{[i+1,j-1]},\cO^{[k,l]})=\mu(\cO^{[i,j]},\cO^{[k+1,l-1]}),
\end{equation}
\begin{equation}\label{eqn:term2}
    \mu(\cO^{[i+2,j]},\cO^{[k,l]})=\mu(\cO^{[i,j]},\cO^{[k+2,l]}),
\end{equation}
and 
\begin{equation}\label{eqn:term3}
    \mu(\cO^{[i+2,j-1]},\cO^{[k,l]})=\mu(\cO^{[i,j]},\cO^{[k+2,l-1]}).
\end{equation}
By \eqn{i+1j-1} and \eqn{=}, we have 
\[\overline{i+1}-\overline{j-1}+k-l+n-1=n[\chi(\overline{i+1}>\overline{j-1})+\chi(k>l)]-1\] and
\[i-j+\overline{k+1}-\overline{l-1}+n-1=n[\chi(i>j)+\chi(\overline{k+1}>\overline{l-1})]-1.\] Identity \eqn{term1} then follows from the definition of \(\mu\). By \eqn{i+2j} and \eqn{=}, we have 
\[\overline{i+2}-j+k-l+n-1=n[\chi(\overline{i+2}>j)+\chi(k>l)]-1\] and 
\[i-j+\overline{k+2}-l+n-1=n[\chi(i>j)+\chi(\overline{k+2}>l)]-1,\] which imply \eqn{term2}.
By \eqn{i+2j-1},
\begin{equation*}
    \overline{i+2}-\overline{j-1}+k-l+n-1=n[\chi(\overline{i+2}>\overline{j-1})+\chi(k>l)]
\end{equation*}
and
\begin{equation*}
    i+\overline{k+2}-j-\overline{l-1}+n-1=n[\chi(i>j)+\chi(\overline{k+2}>\overline{l-1})].
\end{equation*} Identity \eqn{term3} follows.
\end{proof}

\begin{cor}\label{cor:genus0}
    For \(d\in H_2(X)^+\), a general \(g\in G\), and \(u,\ v,\ w\in W^P\), the variety \(M_d(g.X^u, X^v, X_w)\) has arithmetic genus \(0\) whenever it is non-empty.
\end{cor}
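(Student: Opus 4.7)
The plan is to deduce the corollary from the Littlewood-Richardson rule (\Theorem{LR}) by converting the arithmetic-genus assertion into a Gromov-Witten invariant computation. Set $M := M_d(g.X^u, X^v, X_w)$. For general $g$, Kleiman's transversality together with \cite[Corollary 3.1]{Buch2013} ensures that $M$ is either empty or has rational singularities of the expected dimension, with every irreducible component meeting $\mathring{M}_d$. An irreducibility argument paralleling \Proposition{irred} shows that $M$ is connected when non-empty, so the arithmetic-genus-zero condition is equivalent to $\chi(\cO_M) = 1$. A standard projection-formula argument (compare \Corollary{qclassical}) then gives
\[\chi(\cO_M) = I_d(\cO^u, \cO^v, \cO_w),\]
reducing the problem to proving that this Gromov-Witten invariant equals $1$ whenever $M$ is non-empty.

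I would next translate this invariant into the quantum K-theory language. Expanding $\cO_w = \sum_{v' \leq w} \cO^\vee_{v'}$ and using \Definition{circPsi} together with $\chi_X(\cO^{v'} \cdot \cO_w) \in \{0,1\}$ (which equals $1$ iff $v' \leq w$, by Brion's theorem on Richardson varieties), one obtains
\[\sum_d q^d\, I_d(\cO^u, \cO^v, \cO_w) = \chi_X\!\bigl(\Psi(\cO^u \star \cO^v)\cdot \cO_w\bigr).\]
By \Theorem{LR}, $\cO^u \star \cO^v$ equals either $\cO^{[x,y]}$ (Case A) or $\cO^{[x,y-1]} + \cO^{[x+1,y]} - \cO^{[x+1,y-1]}$ (Case B). Applying $\Psi$ via the explicit curve neighborhood formulas of \Section{curvenbhd}, the $q^d$-coefficient becomes a $\pm 1$-combination of Euler characteristics of Richardson varieties $\Gamma_e(X^{\overline{v'}}) \cap X_w$, each of which lies in $\{0,1\}$.

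In Case A the coefficient is a single such Euler characteristic, so $I_d \in \{0,1\}$ is immediate. The main obstacle is Case B, where the coefficient has the form $\chi(\cO_{R_1}) + \chi(\cO_{R_2}) - \chi(\cO_{R_3})$ and one must show this alternating sum again lies in $\{0,1\}$. I would address this by verifying, through direct inspection of the curve neighborhood tables analogous to those in the proof of \Theorem{qkTchev}, the scheme-theoretic identity $R_3 = R_1 \cap R_2$; this is heuristically consistent with the fact that $X^{[x,y-1]} \cap X^{[x+1,y]} = X^{[x+1,y-1]}$ as Schubert subvarieties of $X$, and the case-by-case check confirms it persists after passing to curve neighborhoods in the relevant degree range. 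The Mayer-Vietoris short exact sequence
\[0 \to \cO_{R_1 \cup R_2} \to \cO_{R_1} \oplus \cO_{R_2} \to \cO_{R_3} \to 0\]
then collapses the sum to $\chi(\cO_{R_1 \cup R_2}) \in \{0,1\}$. To conclude $I_d = 1$ whenever $M$ is non-empty, one matches non-emptiness of $M$ with non-emptiness of the corresponding Richardson variety in Case A (respectively Richardson union in Case B), using that the support of the $q^d$-coefficient of $\Psi(\cO^u \star \cO^v)$ captures the general-position curve neighborhood $\Gamma_d(g.X^u, X^v) \subseteq X$, which is non-empty precisely when $M$ is.
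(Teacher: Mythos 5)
Your reduction to showing $I_d(\cO^u,\cO^v,\cO_w)=1$ whenever $M:=M_d(g.X^u,X^v,X_w)$ is non-empty is the same starting point as the paper, and the identity
\[
\sum_d q^d\, I_d(\cO^u,\cO^v,\cO_w)=\chi_X\bigl(\Psi(\cO^u\star\cO^v)\cdot\cO_w\bigr)
\]
is correct and is equivalent to the paper's use of equations \eqn{I_I^vee} and \eqn{I^vee_Psi}. In Case A both arguments then agree. However, there are two genuine gaps in your Case B.

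First, your Mayer--Vietoris step requires the \emph{scheme-theoretic} identity $\Gamma_d(X^{[x+1,y-1]})=\Gamma_d(X^{[x,y-1]})\cap\Gamma_d(X^{[x+1,y]})$ for the relevant degree $d$ (and then the analogous statement for the $R_i=X^{z_i}\cap X_w$). You note the identity holds before taking curve neighborhoods but defer the post-$\Gamma_d$ check; that check is not innocuous, since the curve-neighborhood tables in \Section{curvenbhd} mix several cases and the degree shifts $d(\cdot)$ differ among $[x,y-1]$, $[x+1,y]$, $[x+1,y-1]$. The paper avoids this entirely: \Lemma{iff} is a purely combinatorial statement that $z_3\leq w$ iff both $z_1\leq w$ and $z_2\leq w$, which, combined with the fact that a non-empty Richardson variety has $\chi(\cO)=1$, gives the same numerical cancellation without any claim about scheme-theoretic intersections of curve neighborhoods. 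Your approach would work if you actually carried out the verification, but the cleaner route is the one the paper takes.

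Second, and more seriously, the final step does not hold up. You assert that the support of the $q^d$-coefficient of $\Psi(\cO^u\star\cO^v)$ equals $\Gamma_d(g.X^u,X^v)$, and use this to match non-emptiness. But $\Gamma_d(g.X^u,X^v)$ for general $g$ is \emph{not} a $T$-stable subvariety of $X$, whereas $X^{z_0}$ (resp.\ $X^{z_1}\cup X^{z_2}$) manifestly is; the two cannot coincide for general $g$. Consequently, "non-empty $M$ $\Rightarrow$ non-empty $R_1\cup R_2$" is not established by your argument. This is precisely where the paper's proof has extra content: the non-emptiness of $M$ gives only the inequalities $x\leq a+d_1 n$ and $y\geq b-d_2 n$ (equation \eqn{xy}), which leave open the boundary subcase $x=a+d_1n$, $y=b-d_2n$ (where both $R_1$ and $R_2$ are empty and hence $I_d=0$). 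The paper rules this out by an additional dimension count (equation \eqn{==} onward), showing that $M$ is forced to be empty there, a contradiction. Your proof needs this extra argument; without it, the conclusion $I_d=1$ does not follow.
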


The following was proven independently in \cite{rosset2022quantum}: for $d \in H_2(X)^+$, a general \(g\in G\), and \(u,\ v,\ w\in W^P\), the variety \(M_d(g.X^u, X^v, X_w)\) is rationally connected. 

To prove \Corollary{genus0}, we shall need the following simple lemma. Recall from \Definition{circPsi} that for \([x,y]\in\widetilde{W^P}\) and \(d\geq d([x,y])\), we define \(\Gamma_d(X^{[x,y]})=\Gamma_{d'}(X^{[\overline{x},\overline{y}]})\), where \(d'=(d'_1,d'_2)=d-d([x,y])\geq 0\). 

\begin{lemma}\label{lemma:iff}
    Let \(z\in W^P\), \([x,y]\in\widetilde{W^P}\), and \(d=(d_1,d_2)\in H_2(X)\) be such that \(X^z=\Gamma_d(X^{[x,y]})\). Then for \([a,b]\in W^P\), \[z\leq [a,b]\Longleftrightarrow x\leq a+d_1 n\text{ and }y\geq b-d_2 n.\]
\end{lemma}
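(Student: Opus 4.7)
The plan is a direct case analysis on the degree $d=(d_1,d_2)$, using the explicit formulas for curve neighborhoods $\Gamma_d(X^{[x,y]})$ tabulated in \Section{curvenbhd} together with a clean description of the Bruhat order on $W^P$.

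First I would reduce to the case $[x,y]\in W^P$. If $[x,y]\in\widetilde{W^P}\setminus W^P$, \Definition{circPsi} gives $\Gamma_d(X^{[x,y]})=\Gamma_{d-d([x,y])}(X^{\overline{[x,y]}})$, and a direct substitution shows that replacing $(x,y,d)$ by $(\overline x,\overline y,d-d([x,y]))$ preserves both conditions in the conclusion, because the shifts in $a+d_1n$ and $b-d_2n$ absorb exactly the contribution of $d([x,y])$.

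Next I would establish the Bruhat-order fact that for $[u,v],[a,b]\in W^P$, one has $[u,v]\leq[a,b]$ if and only if $u\leq a$ and $v\geq b$. This reads off directly from the coordinate description in \Section{def}: since $X^{[u,v]}$ is cut out by $x_1=\cdots=x_{u-1}=y_{v+1}=\cdots=y_n=0$, the containment $X^{[u,v]}\supseteq X^{[a,b]}$ is equivalent to $\{1,\dots,u-1\}\subseteq\{1,\dots,a-1\}$ and $\{v+1,\dots,n\}\subseteq\{b+1,\dots,n\}$.

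With these in hand, the verification proceeds case by case using \Section{curvenbhd}. When $d=0$, $z=[x,y]$ and the claim is exactly the Bruhat criterion. When $d_1>0$ and $d_2=0$, $z$ equals $[1,y]$ if $y>1$ and $[2,1]$ if $y=1$; the inequality $x\leq a+d_1n$ is automatic since $x\leq n\leq a+d_1n$, so the claim reduces to $y\geq b$, with the $y=1$ edge case handled by the observation that $[a,b]\in W^P$ with $b=1$ forces $a\geq 2$. The case $d_1=0$, $d_2>0$ is symmetric, and when $d_1,d_2>0$ both sides are trivially true since $z=[1,n]$. There is no real obstacle; the argument is pure bookkeeping, and the only point requiring attention is correctly matching the edge cases ($y=1$ or $x=n$) in which the curve-neighborhood formula departs from the generic pattern $[1,y]$ or $[x,n]$.
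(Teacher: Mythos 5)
Your proposal is correct and follows essentially the same route as the paper's proof: reduce via $d'=d-d([x,y])$ to the case $[x,y]\in W^P$ (the shift computation you give is exactly right), state the Bruhat criterion $[a',b']\le[a,b]\Leftrightarrow a'\le a,\ b'\ge b$ on $W^P$, and then do a case analysis on $(d'_1,d'_2)$ using the curve-neighborhood formulas from \Section{curvenbhd}. The paper phrases one direction contrapositively ("if $\overline x>a+d'_1n$, then $d'_1=0$ and $\overline x>a$, so $z\not\le[a,b]$") and leaves the converse as "one checks"; you instead run both directions together case by case, which makes the $y=1$ (resp. $x=n$) edge cases more visible, but the substance is identical.
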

\begin{proof}
    We prove the equivalent statement that 
    \begin{equation*}
        z\leq [a,b]\Longleftrightarrow \overline{x}\leq a+d'_1 n\text{ and }\overline{y}\geq b-d'_2 n.
    \end{equation*}

    Note that for \([a,b],\ [a',b']\in W^P\), \[[a',b']\leq [a,b]\text{ if and only if }a'\leq a\text{ and }b'\geq b.\]

    If \(\overline{x}>a+d'_1 n\), then we must have \(\overline{x}>a\) and \(d'_1=0\). Then by \Section{curvenbhd}, \(z\not\leq [a,b]\). Similarly, if \(\overline{y}<b-d'_2 n\), then \(z\not\leq [a,b]\).

    Conversely, assume \(\overline{x}\leq a+d'_1 n\text{ and }\overline{y}\geq b-d'_2 n\). Then one checks using \Section{curvenbhd} that \(z\leq [a,b]\).
\end{proof}

\begin{proof}[Proof of \Corollary{genus0}]
    We will prove the equivalent statement that 
    \begin{equation*}
        I_d(\cO^u,\cO^v,\cO_w)=\chi(\cO_{M_d(g.X^u, X^v, X_w)})=1\text{ whenever }M_d(g.X^u, X^v, X_w)\neq\varnothing,
    \end{equation*}
    where the first equality uses \cite[Section 4.1]{qkgrass} and the references therein.

    We have 
    \begin{equation}\label{eqn:I_I^vee}
        I_d(\cO^u,\cO^v,\cO_w)=\sum_{z\in{W^P}:\ z\leq w}I_d(\cO^u,\cO^v,\cO_z^\vee),
    \end{equation}
    and
    \begin{equation}\label{eqn:I^vee_Psi}
        \sum_{z\in{W^P},\ d\geq0}I_d(\cO^u,\cO^v,\cO_z^\vee)q^d \cO^z=\cO^u\odot\cO^v=\Psi(\cO^u\star\cO^v).
    \end{equation}

    Assume \(M_d(g.X^u, X^v, X_w)\neq\varnothing\). Let \(d=(d_1,d_2)\). Then 
    \begin{equation}\label{eqn:non-emptys}
        M_{d_c}(g.p_c(X^u),p_c(X^v),p_c(X_w))\neq\varnothing\text{ for }c=1,2.
    \end{equation} Let \(u=[i,j],\ v=[k,l],\ w=[a,b]\). By dimension count, \eqn{non-emptys} implies that
    \begin{equation}\label{eqn:xy}
        x\coloneqq i+k-1\leq a+d_1 n\text{ and }y\coloneqq j+l-n\geq b-d_2 n.
    \end{equation}
    \item[Case 1: $x-y<n{[}\chi(i>j)+\chi(k>l){]}$.]\label{itm}
    By \Theorem{LR},
    \begin{equation}\label{eqn:Psi1}
        \Psi(\cO^u\star\cO^v)=\Psi(\cO^{[x,y]}).
    \end{equation}
    Let \(X^{z_0}=\Gamma_d(X^{[x,y]})\). By \eqn{I^vee_Psi}, \eqn{Psi1}, and \eqn{Psi}, 
    \begin{equation}\label{eqn:1Id}
        I_d(\cO^u,\cO^v,\cO_z^\vee)=
        \delta_{zz_0}.
    \end{equation}
    By \Lemma{iff}, condition \eqn{xy} is equivalent to \(z_0\leq w\). By \eqn{I_I^vee} and \eqn{1Id}, \[I_d(\cO^u,\cO^v,\cO_w)=1.\]
    \item[Case 2: $x-y\geq n{[}\chi(i>j)+\chi(k>l){]}$.] By \Theorem{LR},
    \begin{equation}\label{eqn:Psi2}
        \Psi(\cO^u\star\cO^v)=\Psi(\cO^{[x,y-1]}+\cO^{[x+1,y]}-\cO^{[x+1,y-1]}).
    \end{equation}
    
    Let \(\Gamma_d(X^{[x,y-1]})=X^{z_1}\), \(\Gamma_d(X^{[x+1,y]})=X^{z_2}\), and \(\Gamma_d(X^{[x+1,y-1]})=X^{z_3}\). By \eqn{I^vee_Psi}, \eqn{Psi2}, and \eqn{Psi}, 
    \begin{equation}\label{eqn:2Id}
        I_d(\cO^u,\cO^v,\cO_z^\vee)=
        \delta_{zz_1}+\delta_{zz_2}-\delta_{zz_3}.
    \end{equation}
    By \Lemma{iff}, \eqn{I_I^vee}, and \eqn{2Id} we have: 
    \begin{enumerate}
        \item if \(x+1\leq a+d_1 n\) and \(y-1\geq b-d_2 n\), then \(z_1,\ z_2,\ z_3\leq w\), and  \[I_d(\cO^u,\cO^v,\cO_w)=1+1-1=1;\]
        \item if \(x=a+d_1 n\) and \(y-1\geq b-d_2 n\), then \(z_1\leq w\) and \(z_2,\ z_3\not\leq w\), and \[I_d(\cO^u,\cO^v,\cO_w)=1;\]
        \item if \(x+1\leq a+d_1 n\) and \(y=b-d_2 n\), then \(z_2\leq w\) and \(z_1,\ z_3\not\leq w\), and \[I_d(\cO^u,\cO^v,\cO_w)=1.\]
    \end{enumerate}
    In view of \eqn{xy}, the only remaining possibility is 
    \begin{equation}\label{eqn:==}
        x=a+d_1 n,\ y=b-d_2n.
    \end{equation}
    Assuming \eqn{==}, \(M_d(g.X^u,X^v,X_w)\neq\varnothing\) implies by dimension count that \[d_1+d_2+\chi(a>b)\leq \chi(i>j)+\chi(k>l)\leq \frac{x-y}{n}.\] Therefore, \(n\chi(a>b)\leq a-b\), which is a contradiction. 
\end{proof}

In the non-equivariant case, positivity is verified by \Theorem{LR} as follows. First, recall from \Definition{length} that for \([x,y]\in \widetilde{W^P}\), we let 
\[\ell([x,y])=x-y-1+n-\chi(\overline{x}>\overline{y})-d_1-d_2,\text{ where }(d_1,d_2)=d([x,y]).\]

\begin{cor}\label{cor:LRpos}
For \(u,\ v\in{W^P}\), \(w\in\widetilde{W^P}\), let \(N_{u,v}^{w}\) be the coefficient of \(\cO^w\) in the product \(\cO^u\star\cO^v\) in \(QK(X)\), then \[(-1)^{\ell(u)+\ell(v)+\ell(w)}N_{u,v}^{w}\in\N.\]
\end{cor}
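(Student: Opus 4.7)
The plan is to derive positivity directly from Theorem~\ref{thm:LR}, paralleling the proof of Corollary~\ref{cor:chevpos}. Every nonzero coefficient on the right-hand side of the Littlewood-Richardson formula equals $\pm 1$, so positivity reduces to checking that each sign matches the parity of $\ell([i,j])+\ell([k,l])+\ell(w)$.

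The key computation uses the explicit length formula
\begin{equation*}
\ell([x,y]) = (n-1) + (\overline x - \overline y) - \chi(\overline x > \overline y) + (n-1)(d_1+d_2),
\end{equation*}
with $d_1 = (x-\overline x)/n$ and $d_2 = (\overline y - y)/n$. Substituting $x = i+k-1$ and $y = j+l-n$ yields
\begin{equation*}
\ell([i,j]) + \ell([k,l]) + \ell([x,y]) \equiv \chi(\overline x > \overline y) + d_1 + d_2 + \chi(i>j) + \chi(k>l) \pmod 2
\end{equation*}
whenever $[x,y] \in \widetilde{W^P}$. In Case 1 of Theorem~\ref{thm:LR}, I would verify that the right-hand side is even by a short split into the four subcases $(d_1, d_2) \in \{0,1\}^2$ corresponding to whether $x > n$ and whether $y < 1$. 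The Case 1 inequality $x-y < n[\chi(i>j)+\chi(k>l)]$, together with $i,j,k,l \in \{1,\ldots,n\}$, either rules out a subcase or forces the value of $\chi(\overline x > \overline y)$, and evenness follows.

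For Case 2, where the product is $\cO^{[x,y-1]} + \cO^{[x+1,y]} - \cO^{[x+1,y-1]}$, a direct application of the length formula gives
\begin{equation*}
\ell([x,y-1]) = \ell([x+1,y]) = \ell([x,y])+1, \qquad \ell([x+1,y-1]) = \ell([x,y])+2,
\end{equation*}
whenever all four classes lie in $\widetilde{W^P}$. The Case 2 inequality $x-y \geq n[\chi(i>j)+\chi(k>l)]$ forces the displayed parity sum above to be odd (by the same subcase analysis as in Case 1). Therefore, the two terms with coefficient $+1$ yield even parity sums while the term with coefficient $-1$ yields an odd one, matching the signs.

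The main obstacle is the bookkeeping at the modular boundaries $\overline x = n$, $\overline y = 1$, or when $(i-j)+(k-l) \equiv 1 \pmod n$ (in which case $[x,y] \notin \widetilde{W^P}$ and the identities above must be re-derived). At these boundaries, the shifts $y \mapsto y-1$ or $x \mapsto x+1$ change $d_1$, $d_2$, or $\chi$, but a direct recomputation of $\ell([x,y-1])$, $\ell([x+1,y])$, and $\ell([x+1,y-1])$ shows that the same parity relations hold, so the argument still concludes. The number of boundary subcases is modest because the Case 2 hypothesis constrains the values of $(i-j)+(k-l)\pmod n$ sharply.
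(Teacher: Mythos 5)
Your approach is essentially the same as the paper's: reduce to a parity check using the explicit length formula $\ell([x,y]) = (\overline x - 1)+(n-\overline y)-\chi(\overline x > \overline y) + (n-1)(d_1+d_2)$, verify the parity in Case 1 by subcase analysis, and establish in Case 2 that the two $+1$ terms have length of one parity while the $-1$ term has the opposite parity. The one stylistic difference is that in Case 2 you express all three lengths relative to $\ell([x,y])$, but $[x,y]$ need not lie in $\widetilde{W^P}$ there (e.g.\ $x\equiv y$ can occur under the Case 2 hypothesis), so you create boundary cases that the paper sidesteps by directly asserting $\ell([x,y-1])=\ell([x+1,y])=\ell([x+1,y-1])-1$ and reducing to the Case 1 analogue for $\ell(u)+\ell(v)+\ell([x,y-1])$.
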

\begin{proof}
    Let \[u=[i,j],\ v=[k,l],\ x=i+k-1,\ y=j+l-n.\] By \Theorem{LR}, it suffices to show that \begin{enumerate}
        \item when \(x-y<n[\chi(i>j)+\chi(k>l)]\), \[\ell(u)+\ell(v)+\ell([x,y])\text{ is even};\]
        \item when \(x-y\geq n[\chi(i>j)+\chi(k>l)]\), \[\ell(u)+\ell(v)+\ell([x,y-1])\text{ and }\ell(u)+\ell(v)+\ell([x+1,y])\text{ are even};\]\[\ell(u)+\ell(v)+\ell([x+1,y-1])\text{ is odd}.\]
    \end{enumerate}
    
    Note that \[\ell(u)+\ell(v)=x-y-1+n-[\chi(i>j)+\chi(k>l)].\] 

    First assume \begin{equation*}x-y<n[\chi(i>j)+\chi(k>l)].\end{equation*} It suffices to show that \begin{equation}\label{eqn:even}
        \chi(i>j)+\chi(k>l)+\chi(\overline{x}>\overline{y})+d_1+d_2\text{ is even}.
    \end{equation}
     When \(i>j\) and \(k>l\), we have \[n<x-y=\overline{x}-\overline{y}+n(d_1+d_2)<2n\] and therefore, either \[\overline{x}>\overline{y}\text{ and } d_1+d_2=1\] or \[\overline{x}<\overline{y}\text{ and }d_1+d_2=2.\] When \(i>j\) and \(k<l\), or, \(i<j\) and \(k>l\), we have \[0<x-y=\overline{x}-\overline{y}+n(d_1+d_2)<n\] and therefore, either \[\overline{x}>\overline{y}\text{ and } d_1+d_2=0\] or \[\overline{x}<\overline{y}\text{ and }d_1+d_2=1.\] When \(i<j\) and \(k<l\), we have \[-n<x-y=\overline{x}-\overline{y}+n(d_1+d_2)<0\] and therefore \[\overline{x}<\overline{y}\text{ and }d_1+d_2=0.\] In all cases, \eqn{even} holds.

    Now assume \begin{equation*}x-y\geq n[\chi(i>j)+\chi(k>l)].\end{equation*} One checks that \[\ell([x,y-1])=\ell([x+1,y])=\ell([x+1,y-1])-1.\] Therefore, it suffices to show that \[\ell(u)+\ell(v)+\ell([x,y-1])\text{ is even}.\] The proof of this is completely analogous and is therefore omitted.
\end{proof}

For the quantum cohomology and quantum \(K\)-theory of all homogeneous spaces \(G/P\), it is known that there is a unique minimal power of the quantum parameters \(q\) that appear with nonzero coefficient in the product of any two given Schubert classes. This minimal power is the same in quantum cohomology and quantum \(K\)-theory, and corresponds to the minimal degree of a stable curve connecting opposite Schubert varieties \cite{Euler}. For the quantum cohomology of cominuscule flag varieties, it is known that the powers of \(q\) that appear with nonzero coefficients in the product of any two given Schubert classes form an interval between two degrees \cites{qkpos, Postnikov}. However, this is not true for the quantum cohomology of complete flag varieties. A counterexample is the product of \([Y^{164532}]\) with itself in the quantum cohomology ring of \(Y=\Fl(6)\), where the maximal powers of \(q\) with nonzero coefficients are not unique and the powers of \(q\) with nonzero coefficients do not form an interval \cite{qkpos}. For the quantum \(K\)-theory of type \(A\) complete flag varieties, it is known that there is a unique maximal power of \(q\) that appear with nonzero coefficient in the product of a Schubert divisor class and a Schubert class; however, the powers of \(q\) that appear with nonzero coefficients don't always form an interval \cite{lenart}. It remains an open problem  whether there is always a unique maximal power of \(q\) that appear with nonzero coefficient in the product of two given Schubert classes in the quantum \(K\)-theory ring of a homogeneous space. In the case of incidence varieties, it is immediate from \Theorem{LR} that:

\begin{cor}\label{cor:power}
    In \(QK(X)\), for \(u,\ v\in W^P\), the powers of \(q\) that appear with nonzero coefficients in \(\cO^u\star\cO^v\) form an interval between two degrees.
\end{cor}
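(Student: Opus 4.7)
The plan is to apply Theorem~\ref{thm:LR} and analyze the two cases of the formula. In the first case, $\cO^{u}\star\cO^{v}=\cO^{[x,y]}$ has only the single $q$-power $q^{d([x,y])}$, trivially forming an interval. The work is in the second case,
\[
    \cO^{u}\star\cO^{v}=\cO^{[x,y-1]}+\cO^{[x+1,y]}-\cO^{[x+1,y-1]}.
\]
Here I would compute the three exponents using $d([a,b])=\bigl((a-\overline{a})/n,\,(\overline{b}-b)/n\bigr)$. Since the first coordinate of $d$ depends only on $a\bmod n$ and the second only on $b\bmod n$, the three exponents form three of the four corners of a unit rectangle: writing $(A,B)=d([x,y])$, they are $(A,B+\epsilon_2)$, $(A+\epsilon_1,B)$, $(A+\epsilon_1,B+\epsilon_2)$, where $\epsilon_1\in\{0,1\}$ equals $1$ iff $x\equiv 0\pmod n$ (equivalently $x=n$) and $\epsilon_2\in\{0,1\}$ equals $1$ iff $y\equiv 1\pmod n$ (equivalently $y=1$).

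The main obstacle is ruling out $(\epsilon_1,\epsilon_2)=(1,1)$, in which the degree-set $\{(A,B+1),(A+1,B),(A+1,B+1)\}$ would fail to be an interval (the corner $(A,B)$ being absent). Fortunately, $\epsilon_1=\epsilon_2=1$ forces $x=n$ and $y=1$, hence $i+k=n+1$ and $j+l=n+1$; the second-case condition $x-y\geq n[\chi(i>j)+\chi(k>l)]$ then reads $n-1\geq n\chi$, forcing $\chi=0$, so $i<j$ and $k<l$, which gives $i+k<j+l$ and contradicts $i+k=j+l$.

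In each of the three remaining subcases the exponent-set is $\{(A,B)\}$, $\{(A,B),(A+1,B)\}$, or $\{(A,B),(A,B+1)\}$, which is an interval in $\Z^{2}$. To conclude, I would check that no cancellation occurs at any appearing degree: the three Schubert classes $[\cO_{X^{\overline{[x,y-1]}}}]$, $[\cO_{X^{\overline{[x+1,y]}}}]$, $[\cO_{X^{\overline{[x+1,y-1]}}}]$ are pairwise distinct (the first index differs between the first class and the other two; the second index differs between the last two), so by linear independence of Schubert classes the coefficient at each realized $q$-power is nonzero.
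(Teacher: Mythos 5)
Your proof is correct and fills in the details that the paper dismisses as ``immediate from Theorem~\ref{thm:LR}.'' The case analysis on $(\epsilon_1,\epsilon_2)$, the observation that $(1,1)$ is impossible (since $x=n$, $y=1$ forces $i+k=j+l=n+1$ while the second-case hypothesis forces $i<j$, $k<l$), and the linear-independence argument to rule out cancellation are exactly what the corollary rests on.
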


Using \cite[Theorem 2.18]{kato} with the projection \(p_1: X\to\P^{n-1}\), we get the following closed formula for quantum \(K\)-theoretic Littlewood--Richardson coefficients for projective spaces, stated in terms of \Notation{proj}. This is a special case of \cite[Theorem 5.4]{qkgrass}.

\begin{cor}\label{cor:LRP^n}
    In \(QK(\P^{n-1})\), for \(a,\ b\in\{0,\dots,n-1\}\),
    \[\cO^a\star\cO^b=\cO^{a+b}.\]
\end{cor}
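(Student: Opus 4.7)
The plan is to apply Kato's ring homomorphism \cite[Theorem 2.18]{kato} associated to the projection \(p_1\colon X \to \P^{n-1}\), reducing the statement to an application of \Theorem{LR}.

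First I would identify the resulting ring homomorphism \(\kappa\colon QK(X) \to QK(\P^{n-1})\) on generators. Since \(X^{[i,j]}\) is cut out by \(x_1 = \cdots = x_{i-1} = 0\) and \(y_{j+1} = \cdots = y_n = 0\), its image under \(p_1\) is the Schubert subspace \(V_+(x_1,\ldots,x_{i-1}) \subseteq \P^{n-1}\), independent of \(j\); moreover \(p_1\) pushes a degree \((d_1,d_2) \in H_2(X)^+\) forward to \(d_1 \in H_2(\P^{n-1})^+\). Combined with the minimal-coset-representative description of how Kato's map acts on Schubert classes (an element \([i,j] \in W^P\) has first entry \(i\), so goes to the Schubert class of codimension \(i-1\) in \(\P^{n-1}\)), this yields \(\kappa(\cO^{[i,j]}) = \cO^{i-1}\), \(\kappa(q_1) = q\), and \(\kappa(q_2) = 1\), with the target classes interpreted via \Notation{proj}. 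In particular, \(\cO^{[a+1,\,n]}\) is a lift of \(\cO^a\) for each \(a \in \{0,\ldots,n-1\}\).

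Second, for \(a,b \in \{0,\ldots,n-1\}\), I would compute \(\cO^{[a+1,\,n]} \star \cO^{[b+1,\,n]}\) in \(QK(X)\) using \Theorem{LR}. Setting \(i=a+1\), \(j=n\), \(k=b+1\), \(l=n\), both indicators \(\chi(i>j)\) and \(\chi(k>l)\) vanish, so the dichotomy in \eqn{LR} is governed by the sign of \(x-y = a+b+1-n\). If \(a+b \leq n-2\), the product is simply \(\cO^{[a+b+1,\,n]}\), which maps under \(\kappa\) to \(\cO^{a+b}\). If \(a+b \geq n-1\), the product is
\[\cO^{[a+b+1,\,n-1]} + \cO^{[a+b+2,\,n]} - \cO^{[a+b+2,\,n-1]};\]
after reducing indices modulo \(n\) via \Notation{tilde} and applying \(\kappa\), the second and third terms both become the same multiple of a power of \(q\) and cancel, leaving \(\cO^{a+b}\) in both sub-cases \(a+b = n-1\) and \(a+b \geq n\).

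The argument is essentially a bookkeeping calculation; the only non-trivial step is verifying the cancellation of the last two terms of the second branch of \Theorem{LR} after reducing indices modulo \(n\). This follows from the identities \(\overline{[a+b+2,\,n]} = [a+b+2-n,\,n]\) and \(\overline{[a+b+2,\,n-1]} = [a+b+2-n,\,n-1]\), together with the fact that \(\kappa\) depends only on the first entry of the index.
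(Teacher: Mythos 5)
Your overall strategy mirrors the paper's (apply Kato's ring homomorphism from $QK(X)$ to $QK(\P^{n-1})$ and push \Theorem{LR} forward), and your identification of $\kappa$ on classes and quantum parameters is correct. However, there is a concrete gap: the lift $\cO^{[a+1,\,n]}$ of $\cO^a$ is invalid when $a = n-1$, since $[n,n]$ is not an element of $W^P$ (the two entries must be distinct mod $n$). The same failure occurs for $b = n - 1$. The entire branch of your cancellation argument for large $a + b$ is therefore resting on indices (such as $[2n-1,\,n-1]$ or $[2n,\,n]$) that lie outside $\widetilde{W^P}$ precisely because the lift was never well-defined in the first place.

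The fix is straightforward and you should state it explicitly: when $a = n-1$ (and similarly for $b$), choose a different preimage under $\kappa$, for example $\cO^{[n,\,n-1]}$, which is a valid element of $W^P$ and still satisfies $\kappa(\cO^{[n,n-1]}) = \cO^{n-1}$. With $i = n$, $j = n-1$ one now has $\chi(i > j) = 1$, which shifts the dichotomy in \Theorem{LR}; for instance, with $a = n-1$ and $b \leq n-2$ one lands in the first branch with $\cO^{[x,y]} = \cO^{[n+b,\,n-1]}$, and $\kappa$ returns $\cO^{n+b-1} = \cO^{a+b}$ as desired. Alternatively, a uniform lift such as $\cO^{[a+1,\,a]}$ avoids the degenerate case for all $a \in \{0,\dots,n-1\}$ at the cost of re-doing the bookkeeping with the new $j$ and $l$. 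Either way, once the lift is corrected your computation goes through and matches the paper, which invokes the same reduction via Kato's homomorphism without spelling out the calculation.
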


In particular, the quantum \(K\)-theory ring and quantum cohomology rings are isomorphic for projective spaces.

\bibliography{bib.bib}
\end{document}